\newtheorem{theorem}{Theorem}
\newtheorem{lemma}{Lemma}
\newcommand{\RR}{{\mathbb{R}}}
\newcommand{\CC}{{\mathbb{C}}}
\newcommand{\herm}{{\sf H}}
\newcommand{\trans}{{\sf T}}
\newcommand{\M}{{\bf M}}
\newcommand{\V}{{\bf V}}
\newcommand{\D}{{\bf D}}
\newcommand{\R}{{\bf R}}
\newcommand{\X}{{\bf X}}
\newcommand{\Z}{{\bf Z}}
\newcommand{\G}{{\bf G}}
\newcommand{\Q}{{\bf Q}}
\newcommand{\Y}{{\bf Y}}
\newcommand{\W}{{\bf W}}
\newcommand{\A}{{\bf A}}
\newcommand{\B}{{\bf B}}
\newcommand{\T}{{\bf T}}
\newcommand{\U}{{\bf U}}
\renewcommand{\H}{{\bf H}}
\newcommand{\I}{{\bf I}}
\newcommand{\y}{{\bf y}}
\DeclareMathOperator{\tr}{tr}
\newcommand{\asto}{\overset{\rm a.s.}{\longrightarrow}}
\newcommand{\EE}{\mathbb{E}}
\newcommand{\trad}{\hat{\mathcal I}_{\mathrm{SE}}}
\newcommand{\gest}{\hat{\mathcal I}_{\mathrm{G}}}
\newcommand{\yest}{\hat{y}_{N,t}}
\newcommand{\erg}{{\mathcal I}}
\newcommand{\II}{{\mathcal I}}
\newcommand{\GG}{\boldsymbol{\Gamma}}
\newtheorem{remark}{Remark}
\newtheorem{assumption}{{\bf Assumption}}
\newtheorem{proposition}{Proposition}
\author{Abla Kammoun$^{1}$, Romain Couillet$^{2}$, Jamal Najim$^1$ and M{\'e}rouane~Debbah$^2$\\ {\it $^1$Telecom ParisTech, $^2$Alcatel-Lucent Chair, Sup\'elec.}}
	\title{Performance of mutual information inference methods under unknown interference}
\begin{document}

	\maketitle
	\begin{abstract}
          In this paper, the problem of fast point-to-point MIMO channel mutual information estimation is addressed, in the situation where the receiver undergoes unknown colored interference, whereas the channel with the transmitter is perfectly known. The considered scenario  assumes that the estimation is based on a few channel use observations during a short sensing period. Using large dimensional random matrix theory, an estimator referred to as {\em G-estimator} is derived. This estimator is proved to be consistent as the number of antennas and observations grow large and its asymptotic performance is analyzed. In particular, the G-estimator satisfies a central limit theorem with asymptotic Gaussian fluctuations. Simulations are provided which strongly support the theoretical results, even for small system dimensions.
	\end{abstract}

	\section{Introduction}
	The use of multiple-input-multiple-output (MIMO) technologies has the potential to achieve high data rates, since several independent channels between the transmitter and the receiver can be exploited. However, the proper evaluation of the achievable rate in the MIMO setting is fundamentally contingent to the knowledge of the transmit-receive channel as well as of the interference pattern. In recent communication schemes such as cognitive radios \cite{MIT99}, it is fundamental for a receiver to be able to infer these achievable rates in a short sensing period, hence extremely fast. This article is dedicated to the study of novel algorithms that partially fulfill this task without resorting to the (usually time consuming) evaluation of the covariance matrix of the interference.
	
	Conventional methods for the estimation of the mutual information in single antenna systems rely on the use of classical estimation techniques which assume a large number of observations. In general, consider $\theta$ a parameter we wish to estimate, and $M$ the number of independent and identically distributed (i.i.d.) observation vectors $\y_1,\cdots,\y_M\in \CC^{N}$. Assume $\theta$ is a function of the covariance matrix ${\boldsymbol\Sigma}=\EE\left[\y_1\y_1^{\herm}\right]$ of the received random process, i.e. $\theta=f({\boldsymbol\Sigma})$, for some function $f$. From the strong law of large numbers, a consistent estimate of the covariance of the random process is simply given by the empirical covariance of $\Y=\left[\y_1,\cdots,\y_M\right]$, i.e. $\widehat{\boldsymbol\Sigma}\triangleq \frac{1}{M}\Y\Y^{\herm}=\frac{1}{M}\sum_{i=1}^M \y_i\y_i^{\herm}$. The one-step estimator $\widehat{\theta}$ of $\theta$ would then consist in using the empirical covariance matrix $\widehat{\boldsymbol\Sigma}$ as a good approximation of ${\boldsymbol\Sigma}$, thus yielding $\widehat{\theta}=f(\widehat{\boldsymbol\Sigma})$ \cite{VAN00}. Such methods provide good performance as long as the number of observations $M$ is very large compared to the vector size $N$, a situation not always encountered in wireless communications, especially in fast changing channel environments.

	To address the scenario where the number of observations $M$ is of the same order as the dimension $N$ of each observation, new consistent estimation methods, sometimes called G-estimation methods (named after Girko's pioneering works \cite{GIR00,GIR98} on {\em General Statistical Analysis}) have been developed, mainly based on large dimensional random matrix theory. In the context of wireless communications, works devoted to the estimation of eigenvalues and eigenspace projections \cite{MES08,MES08b} have given rise to improved subspace estimation techniques \cite{LOU10,MES08c}. Recently, the use of these methods to better estimate system performance indexes in wireless communications has triggered the interest of many researchers. In particular, the estimation of the mutual information of MIMO systems under imperfect channel knowledge has been addressed in \cite{RYA08} and \cite{LOU09b}, where methods based respectively on free probability theory and the Stieltjes transform were proposed.
	
	In this article, we consider a different situation where the receiver perfectly knows the channel with the transmitter but does not a priori know the experienced interference. Such a situation can be encountered in multi-cell scenarios, where interference stemming from neighboring cell users changes fast, which is a natural assumption in packet switch transmissions. Our target is to estimate the instantaneous or ergodic mutual information of the transmit-receive link, which serves here as an approximation of the achievable communication rate provided that no improved precoding is performed. An important usage of the mutual information estimation is found in the context of cognitive radios where multiple frequency bands are sensed for future transmissions. In this setting, the proposed estimator provides the expected rate performance (either instantaneous or ergodic) achievable in each frequency band, prior to actual transmission. The transmit-receive pair may then elect the frequency sub-bands most suitable for communication. 
	
	
        The setting of the article assumes that the channel from the transmitter to the receiver is known by the receiver (but not known by the transmitter), which is a realistic scenario provided that some channel state feedback is delivered by the transmitter, and that the statistical inference on the mutual information is based on $M$ successive observations of channel uses, where $M$ is not large compared to the number of receive antennas $N$, therefore naturally calling for the G-estimation framework. The progression of this article will consist first in studying the conventional one-step estimator, hereafter called the standard empirical (SE) estimator, which corresponds to estimating the interference covariance matrix by the empirical covariance matrix and to replacing the estimate in the mutual information formula. We then show that this approach, although consistent in the large $M$ regime, performs poorly in the regime where both $M$ and $N$ are of similar sizes. We then provide an alternative approach, based on the G-estimation scheme, and produce a novel G-estimator of the mutual information which we first prove consistent in the large $M,N$ regime and for which we derive the asymptotic second order performance through a central limit theorem.

	The remainder of the article is structured as follows. In Section \ref{sec:system_model}, the system model is described and the considered problem is mathematically formalized. In Section \ref{sec:first}, first order results for both the SE-estimator and the G-estimator are provided. In Section \ref{sec:second}, the fluctuations of the G-estimator are studied. We then provide in Section \ref{sec:simulations} numerical simulations that support the accuracy of the derived results, before concluding the article in Section \ref{sec:conclusion}. Mathematical details are provided in the appendices.

	\subsubsection*{Notations} In the following, boldface lower case symbols represent vectors, capital boldface characters denote matrices ($\I_N$ is the size-$N$ identity matrix). If ${\bf A}$ is a given matrix, ${\bf A}^{\herm}$ stands for its transconjugate; if ${\bf A}$ is square, $\tr({\bf A})$, $\det({\bf A})$ and $\|\A\|$ respectively stand for the trace, the determinant and the spectral norm of ${\bf A}$. We say that the variable $X$ has a standard complex Gaussian distribution if $X=U+\mathbf{i} V$ ($\mathbf{i}^2=-1$) , where $U,V$ are independent real random variables with Gaussian distribution ${\mathcal N}(0,1/2)$. The complex conjugate of a scalar $z$ will be denoted by $z^*$.  Almost sure convergence will be denoted by $\xrightarrow[]{\textrm{a.s.}}$, and convergence in distribution by $\xrightarrow[]{\mathcal D}$. Notation ${\mathcal O}$ will refer to Landau's notation: $u_n= {\mathcal O}(v_n)$ if there exists a bounded sequence $K_n$ such that $u_n = K_n v_n$. For a square $N\times N$ Hermitian matrix $\bf A$, we denote $\lambda_1({\bf A})\leq \ldots \leq \lambda_N({\bf A})$ the ordered eigenvalues of $\bf A$.
	 
	\section{System model and problem setting}
	\label{sec:system_model}
	
	\subsection{System model}
	Consider a wireless communication channel $\H_t\in\CC^{N\times n_0}$ between a transmitter equipped with $n_0$ antennas and a receiver equipped with $N$ antennas, the latter being exposed to interfering signals. The objective of the receiver is to evaluate the mutual information of this link during a {\it sensing period} assuming $\H_t$ known at all time. For this, we assume a block-fading scenario and denote by $T\geq 1$ the number of channel coherence intervals (or time slots) allocated for sensing. In other words, we suppose that, within each channel coherence interval $t\in\{1,\ldots,T\}$, $\H_t$ is deterministic and constant. We also denote by $M$ the number of channel uses employed for sensing during each time slot ($M$ times the channel use duration is therefore less than the channel coherence time). The $M$ concatenated signal vectors received in slot $t$ are gathered in the matrix $\overline{\Y}_t\in\CC^{N\times M}$ defined as
	$$\overline{\Y}_t=\H_t\X_{t,0}+\overline{\W}_{t}$$
	where $\X_{t,0}\in\CC^{n_0\times M}$ is the concatenated matrix of the transmitted signals and $\overline{\W}_{t}\in\CC^{N\times M}$ represents the concatenated interference vectors. 
	
	Since $\overline{\W}_{t}$ is not necessarily a white noise matrix in the present scenario, we write $\overline{\W}_{t}=\G_t\W_t$ where $\G_t\in\CC^{N\times n}$ is such that $\G_t\G_t^\herm\in\CC^{N\times N}$ is the deterministic matrix of the noise variance during slot $t$ while $\W_t\in\CC^{n\times M}$ is a matrix filled with independent entries with zero mean and unit variance. That is, we assume that the interference is stationary during the coherence time of $\H_t$, which is a reasonable assumption in practical scenarios, as commented in Remark \ref{rem:1}. The choice of using the additional system parameter $n$, not necessarily equal to $N$, is also motivated by practical applications where the sources of interference may be of different dimensionality than the number of receive antennas, as discussed in Remark \ref{rem:1} below. This will have no effect on the resulting mutual information estimators.
	
	We finally assume that perfect decoding of $\X_{t,0}$ (possibly transmitted at low rate or not transmitted at all) is achieved during the sensing period. If so, since $\H_t$ is assumed perfectly known, the residual signal to which the receiver has access is given by
	$$\Y_t=\overline{\Y}_t-\H_t\X_{t,0}=\G_t\W_{t}.$$

	\begin{remark}
		\label{rem:1}
	The usual white noise assumption naturally arises from the thermal noise created by the electronic components at the receiver radio front end as well as from the large number of exogenous sources of interference in the vicinity of the receiver. However, in cellular networks, and particularly so in cell edge conditions, the main source of interference arises from coherent transmissions in adjacent cells. In this case, only a small number $K$ of signal sources interfere in a colored manner. Calling $\G_{t,k}\in\CC^{N\times n_k}$ the channel from interferer $k\in\{1,\ldots,K\}$, equipped with $n_k$ antennas, to the receiver and $\X_{t,k}\in\CC^{n_k\times M}$ the concatenated transmit signals from interferer $k$, the received signal $\overline{\Y}_t$ can be modeled as
\begin{equation}\overline{\Y}_t=\H_t\X_{t,0}+\sum_{k=1}^K \G_{t,k} \X_{t,k}+\sigma \W'_{t}\label{eq:model_sim}\end{equation}
where $\sigma\W'_t\in \CC^{N\times M}$ is the concatenated additional white Gaussian noise with variance $\sigma^2>0$. In this case, we see that, denoting $n=n_1+\ldots+n_K+N$ and
	\begin{align*}
	\G_t&=\left[\G_{t,1},\cdots,\G_{t,K},\sigma \I_N\right] \\
	\W_t&=\left[\X_{t,1}^{\trans},\cdots,\X_{t,K}^{\trans},{\W'_t}^{\trans}\right]^{\trans}
	\end{align*}
we fall back on the above model. Figure \ref{fig:systemmodel} depicts this scenario in the case of $K=2$ interfering users.
	\end{remark}

	\begin{figure}
	\centering
	\includegraphics[width=12cm]{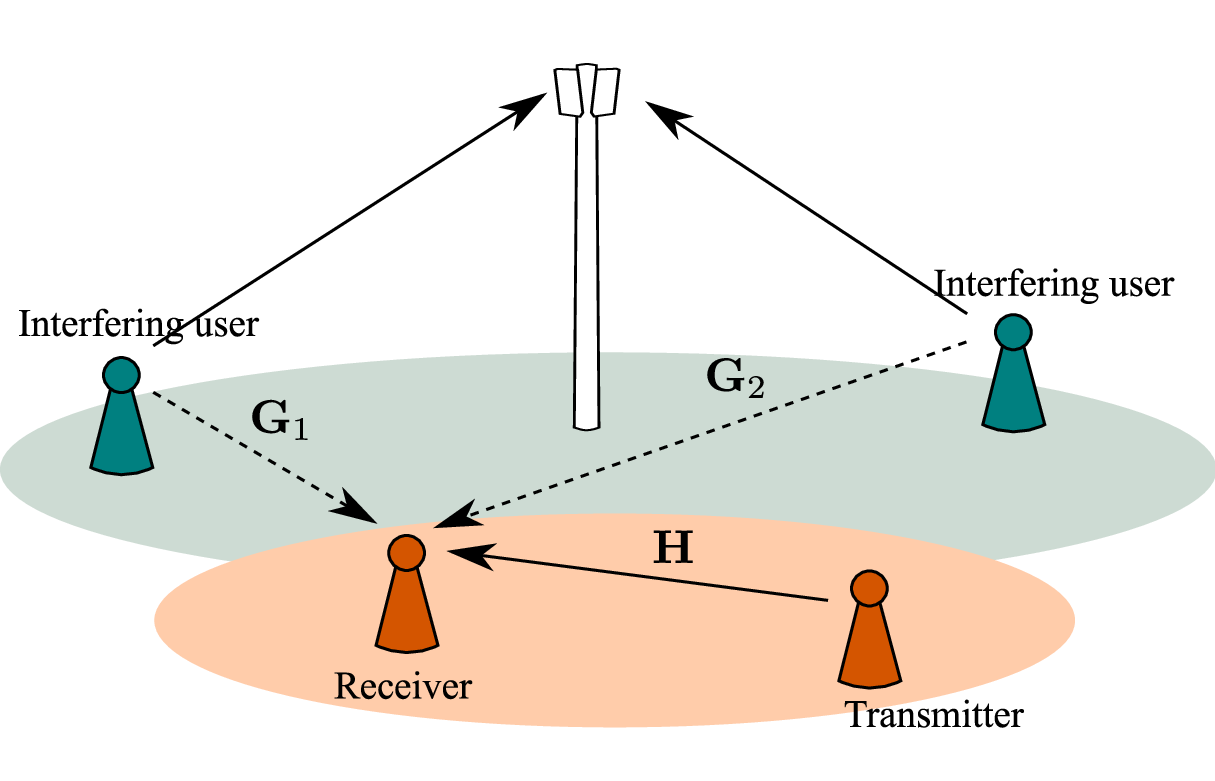}
	\caption{System model of Remark \ref{rem:1} with two interferers.}
	\label{fig:systemmodel}
 	\end{figure}

	The statistical properties of the random variables $\X_{t,0}$ and $\W_t$ are precisely described as follows.
	\begin{assumption}\label{ass:channel} 
          For a given $t$ where $1\leq t\leq T$, the entries of the matrices ${\bf X}_{t,0}$ and $\W_t$ are i.i.d. random variables with standard complex Gaussian distribution.
	\end{assumption}

	The objective for the receiver is to evaluate the {\it average (per-antenna) mutual information} that can be achieved during the $T$ slots. In particular, for $T=1$, the expression is that of the instantaneous mutual information which allows for an estimation of the rate performance of the current channel. If $T$ is large instead, this provides an approximation of the long-term ergodic mutual information. Under Assumption \ref{ass:channel}, the average mutual information is given by
	\begin{eqnarray}\label{eq:capacity}
	\erg&=&\frac{1}{NT}\sum_{t=1}^T\left[ \log\det\left(\H_t\H_t^{\herm}+\G_t\G_t^\herm\right)-\log\det\left(\G_t\G_t^\herm\right)\right].
	\end{eqnarray}
	The target of the article is to address the problem of estimating $\erg$ based on $T$ successive observations $\Y_1,\ldots,\Y_T$ assuming perfect knowledge of $\H_1,\cdots,\H_T$, but unknown $\G_t$ for all $t$.
 	
	\subsection{The standard empirical estimator $\trad$}
	If the number $M$ of available observations during the sensing period in each slot is very large compared to the channel vector $N$, a natural estimator, hereafter referred to as the standard empirical (SE) estimator, consists in the following one-step estimator
	\begin{equation}\label{eq:trad-estimator}
	\trad\quad =\quad \frac{1}{NT}\sum_{t=1}^T\log\det\left(\H_t\H_t^{\herm}+ \frac{1}{M}\Y_t\Y_t^{\herm}\right)
	-\frac{1}{NT}\sum_{t=1}^T\log\det\left(\frac{1}{M} \Y_t\Y_t^{\herm}\right)\ .
	\end{equation}
	For future use, it is convenient to introduce the notation
	\begin{eqnarray}
	\trad(y)&=& \frac{1}{NT}\sum_{t=1}^T\log\det\left(y\,\H_t\H_t^{\herm}+ \frac{1}{M}\Y_t\Y_t^{\herm}\right)
	-\frac{1}{NT}\sum_{t=1}^T\log\det\left(\frac{1}{M} \Y_t\Y_t^{\herm}\right)\ .\label{eq:def-trad-y}
	\end{eqnarray}
	With this notation at hand, $\trad=\trad(1)$.

	For $N$ fixed, it is an immediate application of the law of large numbers and of the continuous mapping theorem to observe that, as $M\to\infty$,
	\begin{equation}
		\label{eq:tradconv}
		\trad - \erg \asto 0.
	\end{equation}
	However, from the discussions above, the assumption $M\gg N$ may not be tenable for practical settings where sensing needs to be performed fast, particularly so under fast fading conditions. In this case, as will be shown in Section \ref{sec:first}, the SE-estimator is asymptotically biased in the large $M,N$ regime, hence {\em not} consistent, and \eqref{eq:tradconv} will no longer hold true. This motivates the study of an alternative consistent estimator based on the G-estimation framework. To this end, we first need to study in depth the statistical properties of the SE-estimator from which the G-estimator will naturally arise. The statistical properties of the latter will similarly be obtained by first studying the second order statistics of the SE-estimator (themselves being of limited practical interest). Before moving to our main results, we first need some further technical hypotheses.
	
	\subsection{The asymptotic regime}
	
	In this section, we formalize the conditions under which the large $M,N$ regime is considered. We will require the following assumptions.
	
\begin{assumption}\label{ass:asymptotics} $M,N,n, n_0\to+\infty$, and
	\begin{eqnarray*}
	0& <& \liminf_{M,N\rightarrow \infty} \frac Nn \quad \leq \quad \limsup_{M,N\rightarrow \infty} \frac Nn \ <\ +\infty\ ,\\
	1&<& \liminf_{M,N\rightarrow \infty} \frac MN \quad \leq\ \quad \limsup_{M,N\rightarrow \infty} \frac MN \ <\ +\infty\ ,\\
	0 &<& \liminf_{N,n_0\rightarrow \infty} \frac {n_0} N \quad \leq \quad \limsup_{N,n_0\rightarrow \infty}
	\frac {n_0} N \ <\ +\infty\ .
	\end{eqnarray*}
	\end{assumption}
	\begin{remark} 
		The constraints over $N$ and $n$ simply state that these quantities remain of the same order. The lower bound for the ratio $M/N$ accounts for the fact that that $M$ is larger than $N$, although of the same order.
	\end{remark}
	In the remainder of the article, we may refer to Assumption \ref{ass:asymptotics} as the convergence mode $M,N,n\to \infty$.
	
        We also need the channel matrices to be bounded in spectral norm, as $M,N,n\to \infty$, as follows.
\begin{assumption}\label{ass:channel-matrix-G}
Let $N=N(n)$ a sequence of integers indexed by $n$. For each $t\in\left\{1,\cdots,T\right\}$, consider the family of $N\times n$ matrices $\G_t$. Then,
\begin{itemize}
\item The spectral norms of $\G_t$ 
  are uniformly bounded in the sense that
	$$
	\sup_{1\le t\le T}\sup_{N,n}\|\G_t\|< \infty\ .
	$$
\item For $t\in \{1,\cdots,T\}$, the smallest eigenvalue of
        $\G_t \G_t^\herm$ denoted by $\lambda_N(\G_t \G_t^\herm)$ is
        uniformily bounded away from zero, i.e. there exists $\sigma^2>0$ such that
	$$
	\inf_{1\le t\le T}\inf_{N,n}\lambda_N(\G_t \G_t^\herm)  \ge \sigma^2 >0 \ .
	$$

\end{itemize}
	\end{assumption}
	\begin{assumption}\label{ass:channel-matrix-H}
Let $N=N(n_0)$ a sequence of integers indexed by $n_0$. For each $t\in\left\{1,\cdots,T\right\}$, consider the family of $N\times n_0$ matrices $\H_t$. Then,
The spectral norms of $\H_t$ are uniformly bounded in the sense that
	$$
	\sup_{1\le t\le T}\sup_{N,n_0}\|\H_t\|< \infty\ .
	$$
	\end{assumption}
	\begin{assumption}\label{ass:channel-matrix-Hb}
	The family of matrices $(\H_t)$ satisfies additionally the following assumptions:
	\begin{itemize}
	\item[1)] Denote by $p_t$ the rank of $\H_t$. Then
	$$
	0 \ <\ \liminf_{N,n_0\rightarrow\infty} \frac{p_t}N\ \leq\ \limsup_{N,n_0\rightarrow\infty} \frac{p_t}N \ <\ 1\ .
	$$
\item[2)] The smallest non-zero eigenvalue of $\H_t\H_t^{\herm}$ is uniformly bounded away from zero, i.e. there exists $\kappa>0$ such that: 
	$$
	\inf_{1\leq t\leq T}\inf_{N,n_0}\left\{\lambda_i(\H_t\H_t^{\herm}) ~\vert~ \lambda_i(\H_t\H_t^{\herm}) >0\right\} \ge \kappa >0.
	$$
\end{itemize}
\end{assumption}

%
%
%
%

	\section{Convergence of the average mutual information estimators}
	\label{sec:first}
	In this section, we study the asymptotic behavior of the SE-estimator $\trad$ and prove that under the asymptotic regime \ref{ass:asymptotics}, this estimator is asymptotically biased. Relying on this first analysis, we then derive a consistent estimator based on the random matrix inference techniques known as G-estimation.

	These techniques can be classified in two categories. One is based on the link between the Stieltjes transform (see Appendix \ref{app:stieltjes}) and the Cauchy complex integral, recently exhibited by Mestre who developed a framework for the estimation of eigenvalues and eigenspace projections \cite{MES08}. This approach is often well-adapted as long as the estimation of parameters depending either on the eigenvalues or on the eigenvector projections of ${\bf Y}_t {\bf Y}_t^\herm$ is considered (see for instance Lemma \ref{lemma:ST}) but may fail when the dependence is more involved. The second approach, which we will adopt here, is based on the technique of deterministic equivalents developed in \cite{HAC07,HAC06}. It follows from the initial work \cite{LOU09b} of Vallet and Loubaton, and will be illustrated in Section \ref{sec:g-estimator}. 

	
	
	


	\subsection{The standard empirical estimator $\trad$}

	We start by studying the second of the two terms in the difference \eqref{eq:capacity} for which it is much easier to derive an estimate.

	\begin{lemma}\label{lemma:ST}
	Let Assumptions \ref{ass:channel}-\ref{ass:channel-matrix-H} hold. Then, we have the following convergence.
	$$
	\frac{1}{N}\log\det(\G_t\G_t^{\herm})
	-\frac{1}{N}\log\det\left(\frac 1M \Y_t\Y_t^{\herm}\right)+\frac{N-M}{N}\log
	\left(\frac{M-N}{M}\right)-1\xrightarrow[M,N,n\rightarrow\infty]{\textrm{a.s.}}
	0\ .
	$$
	\label{lemma:stieltjes}
	\end{lemma}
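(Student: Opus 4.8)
The plan is to relate the random quantity $\frac{1}{N}\log\det\left(\frac{1}{M}\Y_t\Y_t^{\herm}\right)$ to the deterministic quantity $\frac{1}{N}\log\det(\G_t\G_t^{\herm})$ via the Stieltjes transform of the empirical spectral distribution of $\frac{1}{M}\Y_t\Y_t^{\herm}=\frac{1}{M}\G_t\W_t\W_t^{\herm}\G_t^{\herm}$. Fix $t$ and drop the subscript. Write $\log\det\left(\frac{1}{M}\Y\Y^{\herm}\right)=\log\det(\G\G^{\herm})+\log\det\left(\frac{1}{M}(\G\G^{\herm})^{-1/2}\G\W\W^{\herm}\G^{\herm}(\G\G^{\herm})^{-1/2}\right)$ — using that $\G\G^{\herm}$ is invertible by Assumption \ref{ass:channel-matrix-G} — so the difference to control is $\frac{1}{N}\log\det$ of a matrix whose spectrum is that of $\frac{1}{M}\W^{\herm}\G^{\herm}(\G\G^{\herm})^{-1}\G\W$ (up to zero eigenvalues), which is a sample covariance-type matrix with a "whitened" profile. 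When $\G$ is square and full rank this is just $\frac{1}{M}\W^{\herm}\W$; the extra factor $\G^{\herm}(\G\G^{\herm})^{-1}\G$ is an orthogonal projection of rank $N$, so the nonzero part behaves exactly like an $N\times M$ i.i.d. Gram matrix. Thus the term reduces to $\frac{1}{N}\log\det\left(\frac{1}{M}\W_0\W_0^{\herm}\right)$ for an $N\times M$ standard complex Gaussian $\W_0$.

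Next I would identify the almost sure limit of $\frac{1}{N}\log\det\left(\frac{1}{M}\W_0\W_0^{\herm}\right)$. This is a classical Marčenko–Pastur computation: with $c=N/M\in(0,1)$ (which lies in a compact subinterval of $(0,1)$ by Assumption \ref{ass:asymptotics}), $\frac{1}{N}\sum_{i=1}^N\log\lambda_i\left(\frac{1}{M}\W_0\W_0^{\herm}\right)\asto \int \log(x)\,d\mu_c(x)$ where $\mu_c$ is the Marčenko–Pastur law with ratio $c$. The integral $\int\log(x)\,d\mu_c(x)$ is known in closed form; with the normalization here it equals $-1+\frac{c-1}{c}\log(1-c)=-1-\frac{M-N}{N}\log\left(\frac{M}{M-N}\right)$, which is exactly the constant $-\frac{N-M}{N}\log\left(\frac{M-N}{M}\right)-1$ appearing in the statement. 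One must be careful that the limiting ratio $c$ may not exist (only $\liminf,\limsup$ are controlled), so the cleanest route is to prove the convergence along any subsequence where $N/M\to c$ and note the target constant is a continuous function of $c$, or equivalently to bound $\left|\frac{1}{N}\log\det\left(\frac{1}{M}\W_0\W_0^{\herm}\right)-g(N/M)\right|\asto 0$ where $g(c)=-1-\frac{1-c}{c}\log(1-c)$, using that $c$ stays in a compact set where $g$ is uniformly continuous.

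The technical heart is justifying the integration of $\log(\cdot)$ against the empirical spectral distribution, since $\log$ is unbounded near $0$ and the smallest eigenvalue of $\frac{1}{M}\W_0\W_0^{\herm}$ is only of order $(1-\sqrt{c})^2$ in the bulk but has fluctuations; one needs the Bai–Yin theorem (smallest eigenvalue bounded away from $0$ a.s., valid since $\liminf M/N>1$) together with a standard argument that no eigenvalue escapes below the left edge, so that $\log$ is effectively evaluated on a compact interval bounded away from the origin for all large $N$. An alternative, which dovetails with the deterministic-equivalent machinery the paper relies on elsewhere, is to write $\frac{1}{N}\log\det\left(\frac{1}{M}\W_0\W_0^{\herm}+x\I\right)$, use the identity $\frac{\partial}{\partial x}\frac{1}{N}\log\det\left(\frac{1}{M}\W_0\W_0^{\herm}+x\I\right)=\frac{1}{N}\tr\left(\frac{1}{M}\W_0\W_0^{\herm}+x\I\right)^{-1}$, invoke the a.s. convergence of this Stieltjes transform to that of $\mu_c$ uniformly on compacts of $(0,\infty)$, integrate from $x=\infty$ down, and control the $x\to 0^+$ limit using the smallest-eigenvalue bound. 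I expect the main obstacle to be precisely this uniform control near $0$ combined with the non-convergence of $N/M$; the reduction via the projection $\G^{\herm}(\G\G^{\herm})^{-1}\G$ and the closed-form MP integral are routine once that is in place. Finally, reinstating the subscript $t$ and summing the $\asto 0$ statements over the finite index set $\{1,\dots,T\}$ preserves almost sure convergence, which gives the claim.
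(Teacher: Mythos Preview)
Your approach is correct and is genuinely different from, and considerably simpler than, the paper's proof. The paper keeps the full covariance structure $\G_t\G_t^{\herm}$ throughout: it writes $\G_t\G_t^{\herm}=\sigma^2\I_N+\GG_t$, expresses $\frac1N\log\det(\I_N+\sigma^{-2}\GG_t)$ as an integral of the Stieltjes transform $m_{\GG_t}$, invokes the Silverstein fixed-point relation linking $m_{\GG_t}$ to the deterministic equivalent $\underline m$ of $m_{\frac1M \Y_t^{\herm}\Y_t}$, performs a change of variable in the integral, and evaluates the result via asymptotics of $\underline m$ at $0$ and $-\infty$. You instead factor $\G\G^{\herm}$ out at the very first step, using the SVD $\G=\U\D^{1/2}\V^{\herm}$ and Gaussian unitary invariance to reduce exactly to $\frac1N\log\det(\frac1M\W_0\W_0^{\herm})$ with $\W_0=\V^{\herm}\W$ an $N\times M$ standard Gaussian, so that only the pure Mar\v{c}enko--Pastur log-moment and the Bai--Yin edge bound are needed. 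What the paper's route buys is that it does not rely on the distributional identity $\V^{\herm}\W\stackrel{d}{=}\W_0$: as noted in the remark following the lemma, their argument goes through under a moment condition rather than Gaussianity, whereas your reduction uses Assumption~\ref{ass:channel} in an essential way (though one could recover the non-Gaussian case from your decomposition as well, at the price of invoking Silverstein's theorem for the projection population covariance $\V\V^{\herm}$). Two minor points: your intermediate rewriting $-1+\frac{c-1}{c}\log(1-c)=-1-\frac{M-N}{N}\log\!\big(\frac{M}{M-N}\big)$ carries a sign slip (the correct value is $-1+\frac{N-M}{N}\log\frac{M-N}{M}$, which is what the lemma requires), and the final sentence about summing over $t$ is unnecessary since the lemma is stated for a single $t$.
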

	\begin{proof}
See Appendix \ref{app:stieltjes}
\end{proof}
\begin{remark}
It should be noted that in the proof of lemma \ref{lemma:stieltjes}, the Gaussianity assumption of the entries is not necessary and can be replaced  by a finite moment condition.
\end{remark}
\begin{remark}
	Lemma \ref{lemma:stieltjes} relies on the Stieltjes transform estimation technique from Mestre. The latter is used to compute a consistent estimate of the quantity $\frac{1}{N}\sum_{t=1}^T\log\det(\G_t\G_t^{\herm})$, which is seen here as a functional of the (non-observable) eigenvalues of $\G_t\G_t^\herm$. Following the work from Mestre \cite{MES08}, the idea is to link the Stieltjes transform of $\G_t\G_t^\herm$ to that of the (almost sure) limiting Stieltjes transform of the (observable) sample covariance matrix $\frac{1}{M}\Y_t\Y_t^{\herm}$. See \cite{COU12} for a tutorial on these notions.
\end{remark}

As a consequence of Lemma \ref{lemma:ST}, we see that the $\frac{1}{N}\log\det(\frac 1M \Y_t\Y_t^{\herm})$ is a consistent estimate of $\frac{1}{N}\log\det(\G_t\G_t^{\herm})$ (recall that $\frac{1}{M} \mathbb{E} {\Y}_t {\Y}_t^{\herm} = \G_t \G_t^{\herm}$) up to a bias term depending on the time and space dimensions only. This may suggest that, up to the introduction of the term $\H_t\H_t^{\herm}$ in the log determinants for estimating the first term in \eqref{eq:capacity}, the SE-estimator is also a consistent estimator for $\erg$. This is however not true. To study the first term in \eqref{eq:capacity}, which is not as immediate as the second term, we need some further work. We start with a first technical lemma which follows instead from random matrix operations on deterministic equivalents. \footnote{By deterministic equivalents, we mean deterministic quantities which are asymptotically close to the quantity under investigation. The advantage of considering such equivalents comes from the fact that this prevents from studying the true limit of the quantities under investigation (which might not exist anyway). See \cite{HAC07} for more details.}
	
	\begin{lemma}
	Let Assumptions \ref{ass:channel}--\ref{ass:channel-matrix-H} hold and let $y>0$. Then we have the following identities.
	\begin{enumerate}
	\item The fixed-point equation in $y$
	\begin{equation}
	\kappa_t(y)=\frac{1}{M}\tr\left(\G_t \G_t^\herm \left(\frac{\G_t \G_t^\herm}{1+ \kappa_t(y)}+y\H_t\H_t^{\herm}\right)^{-1}\right)
	\label{eq:kappa_t_t}
	\end{equation}
	admits a unique positive solution $\kappa_t(y)$.
	\end{enumerate}
	Denote by $\T_t(y)$ and $\Q_t(y)$ the following quantities:
	$$
	\T_t(y)=\left(y\H_t\H_t^{\herm}+\frac{\G_t \G_t^\herm}{1+\kappa_t(y)}\right)^{-1}\ ,\quad
	\Q_t(y)=\left(y\H_t\H_t^{\herm}+\frac{1}{M}\Y_t\Y_t^{\herm}\right)^{-1}\ .
	$$
	\begin{enumerate}
	\item[2)] Then, for any deterministic family $({\bf S}_N)$ of $N\times
	N$ complex matrices with uniformly bounded spectral norm, we have:
	$$
	\frac{1}{M}\tr{\bf S}_N\Q_t(y)-\frac{1}{M}\tr{\bf S}_N{\T}_t(y)\xrightarrow[M,N,n\rightarrow\infty]{\textrm{a.s.}} 0.
	$$
	\item[3)] Let
	$$
	V_t(y)=\log\det\left(y\H_t\H_t^{\herm}+\frac{\G_t \G_t^\herm}{1+\kappa_t(y)}\right)+M\log(1+\kappa_t(y))-M\frac{\kappa_t(y)}{1+\kappa_t(y)}.
	$$
	Then, the following convergence holds
	$$
	\frac{1}{N}\log\det\left(y\H_t\H_t^{\herm}+\frac{1}{M}\Y_t\Y_t^{\herm}\right)-\frac{1}{N}V_t(y)\xrightarrow[M,N,n\rightarrow\infty]{\textrm{a.s.}} 0\ .
	$$
	\end{enumerate}
	\label{lemma:deter}
	\end{lemma}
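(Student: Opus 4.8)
The three claims in Lemma \ref{lemma:deter} are the classical "deterministic equivalent" package for the sample covariance model $\frac1M\Y_t\Y_t^\herm = \frac1M\G_t\W_t\W_t^\herm\G_t^\herm$ with an additive deterministic term $y\H_t\H_t^\herm$; since $t$ is fixed throughout, I would drop the subscript $t$ and treat it as a single-matrix statement, then note the uniformity over the finite index set $\{1,\dots,T\}$ is automatic. The whole argument rests on Assumptions \ref{ass:asymptotics}, \ref{ass:channel-matrix-G}, \ref{ass:channel-matrix-H}: $N/n$ and $M/N$ stay bounded and bounded away from their critical values, $\|\G\|$ bounded above, $\lambda_N(\G\G^\herm)\ge\sigma^2>0$, and $\|\H\|$ bounded. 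The Gaussianity of $\W$ (Assumption \ref{ass:channel}) lets me use the integration-by-parts / Nash--Poincaré route, though a finite-fourth-moment condition would suffice.

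\textbf{Step 1 (existence/uniqueness of $\kappa_t(y)$).} I would show the map $\kappa\mapsto \frac1M\tr\big(\G\G^\herm(\tfrac{\G\G^\herm}{1+\kappa}+y\H\H^\herm)^{-1}\big)$ sends $(0,\infty)$ into itself, is continuous, and is a strict contraction in an appropriate sense — concretely, that it is increasing and concave (or that its "Stieltjes-like" structure forces a unique fixed point). The standard device: set $z = -1/(1+\kappa)$ and recognize the equation as the defining fixed-point equation for the Stieltjes transform of a generalized Marčenko--Pastur law with population covariance $\G\G^\herm$ shifted by $y\H\H^\herm$; existence and uniqueness of a positive solution then follow from the well-known analyticity/monotonicity argument (cf. Silverstein--Bai), using $\lambda_N(\G\G^\herm)\ge\sigma^2$ and $M/N>1$ to keep everything finite and strictly positive. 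The lower bound $M>N$ is what guarantees $\kappa_t(y)$ stays bounded.

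\textbf{Step 2 (trace convergence, claim 2).} This is the heart of the matter. Writing $\Q = (y\H\H^\herm + \tfrac1M\G\W\W^\herm\G^\herm)^{-1}$ and $\T$ the deterministic equivalent, I would follow the Bai--Silverstein / Hachem--Loubaton--Najim machinery: introduce the resolvent, use the rank-one perturbation (Sherman--Morrison) identity to peel off columns $\w_j$ of $\W$, control the quadratic forms $\frac1M\w_j^\herm\G^\herm\Q_{(j)}\G\w_j$ around their mean $\frac1M\tr\G^\herm\Q_{(j)}\G$ via the trace lemma (concentration of quadratic forms in i.i.d. vectors), and show the "deterministic" approximation of $\frac1M\tr\G^\herm\Q\G$ solves exactly the fixed-point equation of Step 1, so the approximant must be $\kappa_t(y)$ by uniqueness. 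Then a resolvent-identity bound $\|\Q - \T\| \lesssim$ (error terms) combined with $\|\mathbf S_N\|$ bounded gives $\frac1M\tr\mathbf S_N(\Q-\T)\to0$; an almost-sure statement (not just in probability) follows from the usual Borel--Cantelli argument using higher-order concentration (Gaussianity gives all moments, or one uses Burkholder for the martingale differences). Here the uniform lower bound $\lambda_N(\G\G^\herm)\ge\sigma^2$ is essential to bound $\|\Q\|$ and $\|\T\|$ and to keep denominators $1+\kappa$ away from degeneracy; $\|\H\|$ bounded handles the additive term.

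\textbf{Step 3 (log-det convergence, claim 3).} With claim 2 in hand, I would integrate in the variable $y$ (or in an auxiliary spectral parameter). The standard trick: $\frac{d}{dy}\frac1N\log\det(y\H\H^\herm+\tfrac1M\Y\Y^\herm) = \frac1N\tr\big(\H\H^\herm\,\Q(y)\big)$, and similarly one computes $\frac{d}{dy}\frac1N V_t(y)$ and checks — using the fixed-point equation \eqref{eq:kappa_t_t}, which makes the $\kappa_t'(y)$ terms cancel (this is exactly why $V_t$ has that precise form with the $M\log(1+\kappa)-M\kappa/(1+\kappa)$ correction) — that it equals $\frac1N\tr(\H\H^\herm\T(y))$. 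Claim 2 with $\mathbf S_N = \H\H^\herm$ gives the two derivatives are asymptotically equal; then integrating from some large reference point $y_0$ where both quantities are explicitly close (or from $y\to\infty$, comparing to $\log\det(y\H\H^\herm)$ on the range of $\H$ plus a controlled remainder on the kernel — here Assumption \ref{ass:channel-matrix-Hb} about the rank $p_t$ and smallest nonzero eigenvalue of $\H\H^\herm$ would be used if invoked, though claims 1--3 only cite up to \ref{ass:channel-matrix-H}) yields the result, provided one has a dominated-convergence / uniform-integrability argument to pass the a.s. convergence through the integral. A boundedness-in-$y$ estimate on $\frac1N\tr(\H\H^\herm\Q(y))$ uniformly on compacts, plus the behavior as $y\to\infty$, closes this.

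\textbf{Main obstacle.} The genuinely laborious part is Step 2: establishing the operator-norm (or trace-norm) closeness of $\Q_t(y)$ to $\T_t(y)$ with almost-sure convergence and the right uniformity, because it requires the full resolvent/concentration apparatus — controlling the martingale-difference decomposition, the variance of the quadratic forms, and the self-consistency of the fixed point — and making sure all the implicit constants are uniform over $N,n$ using only the stated boundedness assumptions. Steps 1 and 3 are comparatively mechanical once Step 2 is available. I would also flag that the precise algebraic form of $V_t$ is not something to guess: it is \emph{forced} by requiring $\frac{d}{dy}V_t(y)=\tr(\H\H^\herm\T(y))$ together with matching the $y\to\infty$ asymptotics, so I would derive it that way rather than verify it blindly.
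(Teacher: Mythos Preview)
Your outline is correct in substance, but the route differs from the paper's in a way worth noting. The paper does \emph{not} rebuild the deterministic-equivalent machinery from scratch. Instead it introduces a regularization parameter $\rho>0$, writes $\Q_t(\rho,y)=(\rho\I_N+y\H_t\H_t^\herm+\tfrac1M\Y_t\Y_t^\herm)^{-1}$, and for $\rho>0$ simply \emph{cites} the deterministic-equivalent package of Hachem--Loubaton--Najim \cite{HAC07}: existence/uniqueness of $\kappa_t(\rho,y)$, the trace convergence, and the log-det convergence are all quoted. The entire proof then reduces to an $\epsilon$-argument passing $\rho\searrow 0$: one writes $\tfrac1M\tr\G\G^\herm\Q_t(y)-\kappa_t(y)$ as a three-term telescoping sum through $\Q_t(\epsilon,y)$ and $\kappa_t(\epsilon,y)$, and bounds the first and third differences by $C\epsilon$ using the resolvent identity and the key fact that $\lambda_N(\G\G^\herm)\ge\sigma^2>0$ (so $\|\Q_t(0,y)\|$ is a.s.\ eventually bounded, since $\tfrac1M\widetilde\W\widetilde\W^\herm$ has smallest eigenvalue bounded away from zero when $\limsup N/M<1$). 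Item 3) is handled by the same $\rho\to0$ transfer applied to the log-det, not by integrating in $y$. For item 1), the paper gives a one-line direct proof: the map $f(x)=x-\tfrac1M\tr\D_t(y\U_t^\herm\H\H^\herm\U_t+\tfrac{\D_t}{1+x})^{-1}$ has $f'(x)>0$ everywhere, $f(0)<0$, $f(+\infty)=+\infty$, hence a unique zero.

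What each approach buys: yours is self-contained and would constitute an independent proof of the \cite{HAC07} result specialized to this model, at the cost of redoing the full resolvent/martingale/Borel--Cantelli apparatus; the paper's is far shorter because it outsources all that work and focuses only on the genuinely new step, namely removing the regularization $\rho$ thanks to Assumption \ref{ass:channel-matrix-G}. Your Step 3 via $y$-integration is a legitimate alternative, but note the boundary issue you flagged is real: taking $y\to\infty$ without rank control on $\H\H^\herm$ is delicate, and the cleaner choice is $y_0=0$, where the boundary value is supplied by Lemma \ref{lemma:ST}. The paper sidesteps this entirely by transferring the log-det result itself from $\rho>0$.
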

	\begin{proof}	
	See Appendix \ref{app:lemma_deter}.
\end{proof}
	
Clearly, when setting $y=1$, this result provides a convergence result for the SE-estimator, as will be stated in Theorem \ref{th:bias}. Lemma \ref{lemma:deter} is however more generic in its replacing the term $1$ in front of $\H_t\H_t^\herm$ by an auxiliary parameter $y$. As a matter of fact, the introduction of $y$ is at the core of the novel estimator derived later. We can indeed already anticipate the remainder of the derivations: if $(1+\kappa_t(y))^{-1}$ can be made equal to $y$, then the first term in the expression of $V_t(y)$ is proportional to the first term in \eqref{eq:capacity} which we are interested in. Turning the factor $1$ into a generic variable $y$ will therefore provide the flexibility missing to estimate \eqref{eq:capacity} precisely in the large $M,N,n$ regime. Before getting into these considerations, let us start with the following result on the SE-estimator. 

	
	\begin{theorem}[Asymptotic bias of the SE-estimator]
	Let Assumptions \ref{ass:channel}-\ref{ass:channel-matrix-H} hold, and denote 
	\begin{align}\label{eq:def-V}
	\mathcal{V}(y)&=\frac{1}{NT}\sum_{t=1}^T \left( \log\det\left( y\H_t\H_t^{\herm}+\frac{\G_t\G_t^\herm}{1+\kappa_t(y)}\right)-\log\det(\G_t\G_t^{\herm})\right)\nonumber \\
	&+ \frac{1}{T}\sum_{t=1}^T\left(\frac MN \log(1+\kappa_t(y))-\frac MN \frac{\kappa_t(y)}{1+\kappa_t(y)}\right)+\frac{M-N}N \log\left(\frac{M-N}{M}\right)+1.
	\end{align}
	where $\kappa_t(y)$ is the unique solution of \eqref{eq:kappa_t_t}. Then,
	$$
	\trad-\mathcal{V}(1)\xrightarrow[M,N,n\rightarrow\infty]{\textrm{a.s.}} 0\ .
	$$
	\label{th:bias}
	\end{theorem}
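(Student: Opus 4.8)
The plan is to obtain the theorem as an immediate consequence of Lemmas~\ref{lemma:ST} and \ref{lemma:deter}, specialised to $y=1$, by splitting $\trad$ slot by slot and matching terms. Write
$$
\trad \;=\; \frac{1}{T}\sum_{t=1}^T \frac1N\log\det\!\left(\H_t\H_t^\herm+\tfrac1M\Y_t\Y_t^\herm\right) \;-\; \frac{1}{T}\sum_{t=1}^T \frac1N\log\det\!\left(\tfrac1M\Y_t\Y_t^\herm\right).
$$
For the second (``denominator'') family of terms I would invoke Lemma~\ref{lemma:ST}, which after using $-\tfrac{N-M}{N}=\tfrac{M-N}{N}$ reads, for each $t$,
$$
-\frac1N\log\det\!\left(\tfrac1M\Y_t\Y_t^\herm\right) \;=\; -\frac1N\log\det(\G_t\G_t^\herm)\;+\;\frac{M-N}{N}\log\!\left(\frac{M-N}{M}\right)\;+\;1\;+\;o(1)\quad\text{a.s.}
$$
For the first (``numerator'') family I would invoke item~3) of Lemma~\ref{lemma:deter} with $y=1$, which gives
$$
\frac1N\log\det\!\left(\H_t\H_t^\herm+\tfrac1M\Y_t\Y_t^\herm\right) \;=\; \frac1N V_t(1)\;+\;o(1)\quad\text{a.s.},
$$
with $V_t(1)=\log\det\!\big(\H_t\H_t^\herm+\tfrac{\G_t\G_t^\herm}{1+\kappa_t(1)}\big)+M\log(1+\kappa_t(1))-M\tfrac{\kappa_t(1)}{1+\kappa_t(1)}$, where $\kappa_t(1)$ is the unique positive solution of \eqref{eq:kappa_t_t} furnished by item~1) of that lemma (this is also what makes $\mathcal{V}(1)$ well defined).

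Adding the two expansions for each $t$, distributing the $1/N$ factor over $V_t(1)$ so that the pieces $M\log(1+\kappa_t(1))$ and $M\kappa_t(1)/(1+\kappa_t(1))$ produce the ratios $M/N$ appearing in \eqref{eq:def-V}, and then averaging over $t=1,\dots,T$, I would read off that the deterministic limit is exactly $\mathcal{V}(1)$: the blocks $\log\det(\H_t\H_t^\herm+\G_t\G_t^\herm/(1+\kappa_t(1)))-\log\det(\G_t\G_t^\herm)$ reproduce the first line of \eqref{eq:def-V}, the blocks $\tfrac MN[\log(1+\kappa_t(1))-\kappa_t(1)/(1+\kappa_t(1))]$ reproduce the first group on its second line, and the slot-independent remainder $\tfrac{M-N}{N}\log(\tfrac{M-N}{M})+1$ reproduces the last two terms. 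Hence $\trad-\mathcal{V}(1)\to0$.

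The only point requiring a word of care is that almost-sure convergence must be made to hold on a single probability-one event. Since the number of slots $T$ is fixed, the expansions above involve only $2T$ sequences that each converge a.s.\ to zero; intersecting the corresponding probability-one events yields an event of probability one on which all $2T$ expansions hold simultaneously, and on that event a finite linear combination with the bounded deterministic weights $1/T$ (the extra $1/N$ factors sitting inside already-controlled quantities) again converges to zero. No independence of the $\W_t$ across $t$ is needed. In truth there is no genuine obstacle beyond this bookkeeping: all the analytic content — existence and uniqueness of the fixed point \eqref{eq:kappa_t_t}, the deterministic-equivalent trace and log-determinant estimates, and the Stieltjes-transform identity of Lemma~\ref{lemma:ST} — has been delegated to the two lemmas, so the theorem is simply their combination at $y=1$.
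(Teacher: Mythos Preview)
Your proof is correct and follows exactly the paper's own approach: the paper simply states that gathering item~3) of Lemma~\ref{lemma:deter} together with Lemma~\ref{lemma:stieltjes} yields the result. Your additional bookkeeping on matching the terms of $\mathcal V(1)$ and on intersecting finitely many almost-sure events is sound and merely spells out what the paper leaves implicit.
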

	\begin{proof}
	Gathering item 3) of Lemma \ref{lemma:deter} together with Lemma \ref{lemma:stieltjes} yields the desired result.
	\end{proof}

	This result suggests that the SE-estimator is not necessarily a consistent estimator of the mutual information, as there is no reason for the bias term in \eqref{eq:def-V} (for $y=1$) to be identically null. However, based on the discussion prior to Theorem \ref{th:bias}, we are now in position to derive a novel consistent estimator. The following section is dedicated to this task.

	\subsection{A G-estimator of the average mutual information}\label{sec:g-estimator}
	
	The following result is our main contribution, which provides the novel consistent estimator for \eqref{eq:capacity}.

	\begin{theorem}[G-estimator for the average mutual information]
	Assume that \ref{ass:channel}-\ref{ass:channel-matrix-H} hold and define the quantity
	\begin{align*}
	\gest &= \frac{1}{NT}\sum_{t=1}^T\log\det\left(\I_N+\yest\H_t\H_t^{\herm}\left(\frac{1}{M}\Y_t\Y_t^{\herm}\right)^{-1}\right)\\
	&+\frac{1}{T}\sum_{t=1}^T\frac{(M-N)}{N}\left[\log\left(\frac{M}{M-N}\yest \right)+1\right]-\frac{M}{N}\yest
	\end{align*}
	where $\yest$ is the unique real positive solution of
	$$
	\yest =\frac{\yest}{M}\tr
	\H_t\H_t^{\herm}\left(\yest\,\H_t\H_t^{\herm}+\frac{1}{M}\Y_t\Y_t^{\herm}\right)^{-1}+\frac{M-N}{M}.
	$$
	Then
	$$
	\gest-\erg\xrightarrow[M,N,n\rightarrow\infty]{\textrm{a.s.}} 0\ .
	$$
	\label{th:gestimator}
	\end{theorem}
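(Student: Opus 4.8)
The estimator $\gest$ is engineered so that the free parameter $y$ of Lemma~\ref{lemma:deter} can be tuned to satisfy the self-consistency relation $(1+\kappa_t(y))^{-1}=y$; the whole argument is the systematic exploitation of this fact, and I would organize it in four steps. \emph{Step 1 (the random fixed point).} Writing $C_t=\frac1M\Y_t\Y_t^{\herm}$ and $A_t=\H_t\H_t^{\herm}$, the equation defining $\yest$ is equivalent to $\frac1M\tr\!\big(C_t(yA_t+C_t)^{-1}\big)=1-y$. On $(0,\infty)$ the map $y\mapsto\frac1M\tr\!\big(C_t(yA_t+C_t)^{-1}\big)-(1-y)$ is convex, equals $\frac NM-1<0$ near $y=0$ and is positive at $y=1$, hence has a unique zero, which lies in $(0,1)$; this gives existence and uniqueness of $\yest$. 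From $C_t\preceq yA_t+C_t\preceq(\|A_t\|+\|C_t\|)\I_N$, together with $\|\H_t\|=O(1)$ (Assumption~\ref{ass:channel-matrix-H}) and the almost-sure bounds $\liminf_N\frac1N\tr C_t>0$ and $\limsup_N\|C_t\|<\infty$ (which follow from Assumption~\ref{ass:channel-matrix-G} and standard concentration and spectral-norm estimates for $\G_t\W_t$), one then deduces uniform bounds $0<a\le\yest\le b<1$ with $a,b$ independent of $N$ and $t$.

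\emph{Step 2 (deterministic counterpart and the key identity).} I would next introduce the deterministic fixed point $y_t$, the solution of the same equation with $\Q_t(y)$ replaced by its deterministic equivalent $\T_t(y)$ of Lemma~\ref{lemma:deter}, namely $y_t=\frac{y_t}{M}\tr\!\big(A_t\T_t(y_t)\big)+\frac{M-N}{M}$; the same convexity argument yields existence, uniqueness and the same a priori bounds. The key point is that this $y_t$ satisfies $(1+\kappa_t(y_t))^{-1}=y_t$: indeed $y_tA_t\T_t(y_t)=\I_N-(1+\kappa_t(y_t))^{-1}\G_t\G_t^{\herm}\T_t(y_t)$, so inserting \eqref{eq:kappa_t_t} in the form $\kappa_t(y_t)=\frac1M\tr\!\big(\G_t\G_t^{\herm}\T_t(y_t)\big)$ collapses the defining equation to $y_t=1-\frac{\kappa_t(y_t)}{1+\kappa_t(y_t)}$. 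Substituting $1+\kappa_t(y_t)=1/y_t$, $\log(1+\kappa_t(y_t))=-\log y_t$ and $\frac{\kappa_t(y_t)}{1+\kappa_t(y_t)}=1-y_t$ into the definition of $V_t$ in Lemma~\ref{lemma:deter} gives the closed form
\[
\frac1N V_t(y_t)=\frac1N\log\det\!\big(\H_t\H_t^{\herm}+\G_t\G_t^{\herm}\big)+\frac{N-M}{N}\log y_t-\frac MN\big(1-y_t\big).
\]

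\emph{Step 3 (transfer to the random argument).} I would then prove $\yest-y_t\to0$ almost surely. Item~2) of Lemma~\ref{lemma:deter} applied with the deterministic matrix $A_t=\H_t\H_t^{\herm}$ gives $\frac1M\tr\!\big(A_t\Q_t(y)\big)-\frac1M\tr\!\big(A_t\T_t(y)\big)\to0$ almost surely for each fixed $y$; promoting this to \emph{uniform} convergence on the compact interval $[a,b]$ (the $y$-derivatives of both terms being bounded there because $(yA_t+C_t)^{-1}$ and $\T_t(y)$ have bounded norm for $y\ge a$) and using the strict monotonicity, away from $0$, of the two maps whose roots are $\yest$ and $y_t$ forces the roots to be asymptotically equal. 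A local Lipschitz estimate then lets me transfer item~3) of Lemma~\ref{lemma:deter}: since $y\mapsto\frac1N\log\det(yA_t+C_t)$ has derivative $\frac1N\tr\!\big(A_t\Q_t(y)\big)$, uniformly bounded on $[a,b]$ almost surely, one obtains $\frac1N\log\det\!\big(\yest A_t+C_t\big)-\frac1N V_t(y_t)\to0$ almost surely. This step is the main obstacle: the deterministic-equivalent estimates of Lemma~\ref{lemma:deter} hold only pointwise in $y$, so one must make them uniform on a compact $y$-interval and supply quantitative (Lipschitz and strict-monotonicity) control of both fixed-point maps in order to legitimately evaluate everything at the \emph{random} argument $\yest$.

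\emph{Step 4 (bookkeeping).} Finally I would assemble the pieces. Using $\log\det\!\big(\I_N+\yest A_tC_t^{-1}\big)=\log\det(\yest A_t+C_t)-\log\det C_t$, Step~3, the closed form of Step~2, and Lemma~\ref{lemma:ST} in the form $\frac1N\log\det C_t=\frac1N\log\det(\G_t\G_t^{\herm})+\frac{N-M}{N}\log\frac{M-N}{M}-1+o(1)$, the $t$-th summand of $\gest$ equals $\frac1N\big(\log\det(\H_t\H_t^{\herm}+\G_t\G_t^{\herm})-\log\det(\G_t\G_t^{\herm})\big)$ plus a remainder built from the $\log y_t$, $\log\frac{M-N}{M}$ and constant contributions of the three sources. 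A direct check shows this remainder is $o(1)$: the $\pm\frac{M-N}{N}\log y_t$ terms cancel, the $\log\frac{M-N}{M}$ terms cancel against $\frac{M-N}{N}\log\frac{M}{M-N}$, and $-\frac MN(1-y_t)+1+\frac{M-N}{N}-\frac MN\yest\to0$ by Step~3 and $\frac{M-N}{N}=\frac MN-1$. Averaging over the finitely many $t$ then gives $\gest-\erg\to0$ almost surely, which is the claim.
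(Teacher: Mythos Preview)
Your proposal is correct and follows essentially the same four-step architecture as the paper: identify the self-consistency relation $y=(1+\kappa_t(y))^{-1}$, define the deterministic target $y_t$ (the paper's $y_{N,t}$), show $\yest-y_t\to0$ a.s., and then assemble with Lemma~\ref{lemma:ST}. The only noteworthy difference is in Step~3: you propose to upgrade Lemma~\ref{lemma:deter}-2) to uniform convergence on a compact $y$-interval via equicontinuity and then compare roots, whereas the paper avoids the uniformity issue by absorbing the \emph{deterministic} $y_{N,t}$ into $\H_t$ (setting $\widetilde\H_t=\sqrt{y_{N,t}}\,\H_t$) and then expanding $k(y_{N,t},\yest)$ algebraically to factor out $(y_{N,t}-\yest)$ with a coefficient shown to be bounded below---this coefficient is, up to sign and the replacement $\yest\approx y_{N,t}$, exactly the derivative whose non-degeneracy you would need for your monotonicity argument, so the two routes require the same estimate.
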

	\begin{proof}
We hereafter provide an outline of the proof, which is developed in full detail in Appendix \ref{app:g_estimator}. Denote ${\mathcal I}_t$ the average mutual information at time $t$ as
	\begin{eqnarray*}
	{\mathcal I}_t&\triangleq&\frac{1}{N}\log\det(\G_t\G_t^{\herm}+\H_t\H_t^{\herm})
	-\frac 1N \log\det(\G_t\G_t^{\herm})\ ,\\
	&\triangleq& {\mathcal I}_{t,1}-{\mathcal I}_{t,2}.
	\end{eqnarray*}
	Recall that a consistent estimate $\hat{\mathcal I}_{t,2}$ of ${\mathcal I}_{t,2}$ was provided in Lemma \ref{lemma:stieltjes}. It therefore remains to build a consistent estimate for ${\mathcal I}_{t,1}$.
	
	The proof is divided into four steps, as follows.
	\begin{enumerate}
	\item In the first step, we exploit the convergence of parametrized quantities of interest. Denote
	$$f(y)=\frac{1}{N}\log\det\left(\frac{1}{M}\Y_t\Y_t^{\herm}+y\H_t\H_t^{\herm}\right)$$ and recall the
	definition of $\kappa_t(y)$ as given in Lemma
	\ref{lemma:deter}-1). By Lemma \ref{lemma:deter}-3),
	$$
	-f(y)+\frac{1}{N}\log\det\left(\frac{\G_t\G_t^{\herm}}{1+\kappa_t(y)}+y\H_t\H_t^{\herm}\right)
	+\frac{M}{N}\log(1+\kappa_t(y))-\frac{M}{N}\frac{\kappa_t(y)}{1+\kappa_t(y)}
	\xrightarrow[M,N,n\rightarrow\infty]{\mathrm{a.s}} 0\ .
	$$
	Clearly, for most values of $y$; the deterministic quantity to which $f(y)$ converges differs from $\II_{t,1}$.
	
	\item In the second step, we find a specific value of $y$ to enforce
	the desired quantity $\II_{t,1}$ to appear. One can readily check
	that if $y_{N,t}$ is the solution of the equation in $y$
	\begin{equation}\label{eq:y-def}
	y=\frac 1{1+\kappa_t(y)}
	\end{equation}
	then we immediately obtain
	\begin{equation}\label{eq:C1-est-y}
	\II_{t,1} -\left[\frac{1}{N}\log\det\left(\frac{1}{M}\Y_t\Y_t^{\herm}+y_{N,t}\H_t\H_t^{\herm}\right)+\frac{M-N}{N}\log(y_{N,t})+\frac{M}{N}(1-y_{N,t})\right]\xrightarrow[M,N,n\rightarrow\infty]{\textrm{a.s.}}
	0\ .
	\end{equation}
	From the definition of $\kappa_t(y)$, we show that there
	exists a unique positive $y_{N,t}$ solution of \eqref{eq:y-def}, given by the closed-form
	expression
	\begin{equation}\label{eq:y-exp}
	y_{N,t}=1-\frac{1}{M}\tr\left[(\G_t\G_t^{\herm})(\H_t\H_t^{\herm}+\G_t\G_t^{\herm})^{-1}\right]\ .
	\end{equation}
	However, the value of $y_{N,t}$ still depends upon the unknown matrix $\G_t$ to this point.
	
	\item In the third step, we provide a consistent estimator $\yest$ of
	$y_{N,t}$. Based on an analysis of $\kappa_t(y)$, and on finding a
	consistent estimate for this quantity, we show that there exists
	a unique positive solution $\yest$ to 
	\begin{equation}
	\yest=\frac{1}{M}\tr \yest\, \H_t \H_t^{\herm} \left(\yest\, \H_t \H_t^{\herm}+\frac{1}{M}\Y_t\Y_t^{\herm}\right)^{-1}+\frac{M-N}{M}.
	\label{eq:yest}
	\end{equation}
	Moreover, $\yest$ satisfies
	$$
	\yest - y_{N,t} \xrightarrow[M,N,n\to \infty]{\mathrm{a.s.}} 0\ .
	$$
	
	\item Finally, it remains to check that we can replace $y_{N,t}$ by
	$\yest$ in the convergence \eqref{eq:C1-est-y}. This immediately
	yields a consistent estimate $\hat{\II}_{t,1}$ for $\II_{t,1}$. For the proof of the
	theorem to be complete, it remains to gather the estimates of $\II_{t,1}$ and $\II_{t,2}$, which finally yields the announced result
	$$
	\hat{\II}_G\ =\ \frac 1T \sum_{t=1}^T \left( \hat{\II}_{t,1} - \hat{\II}_{t,2}\right).
	$$
	\end{enumerate}
	\end{proof}

	\section{Fluctuations of the G-estimator}
	\label{sec:second}
	
	In this section, we establish a central limit theorem for the improved G-estimator $\gest$, so to evaluate the asymptotic performance of our novel estimator. Due to the Gaussian assumption on $\W_t$, we can use the powerful {\it Gaussian methods} developed for the study of large random matrices by Pastur et al. \cite{PAS98,HAC06}. In order to derive the asymptotic fluctuations of the G-estimator $\gest$, similar to the previous section, a first step consists in evaluating the fluctuations of $\trad(y)$.
	
	\begin{theorem}
	Let Assumptions \ref{ass:channel}-\ref{ass:channel-matrix-Hb} hold and recall the definition \eqref{eq:def-trad-y} of $\trad(y)$. We then have the following results.
	\begin{enumerate}
	\item The sequence of real numbers
	\begin{align*}
	\alpha_N(y)&=\frac{2\log(M)}{T^2}-\frac{1}{T^2}\sum_{t=1}^T\log\left[(M-N)\Bigg(M(\kappa_t(y)+1)^2 
	-\tr\left(\frac{\I_N}{\kappa_t(y)+1}+y\H_t\H_t^{\herm}(\G_t\G_t^{\herm})^{-1}\right)^{-2}\Bigg)\right]
	\end{align*}
	is well-defined and
	$$
	0\ <\ \liminf_{M,N,n\rightarrow\infty} \alpha_N(y)\ \leq\ \limsup_{M,N,n\rightarrow\infty} \alpha_N(y)\ <\ +\infty\ .
	$$
	\item The following convergence holds 
	$$
	\frac{N}{\sqrt{\alpha_N(y)}}\left( \trad(y) - \mathcal{V}(y)
	\right)\xrightarrow[N,M,n\to\infty]{\mathcal{D}}\mathcal{N}(0,1)
	$$
	where $\mathcal{V}(y)$ is defined in \eqref{eq:def-V}.
	\end{enumerate}
	\label{th:clt_main}
	\end{theorem}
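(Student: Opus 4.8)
\emph{(Outline of the strategy.)}

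\textbf{Reduction to a single time slot.} The starting point is that the interference matrices $\W_1,\dots,\W_T$ are independent and $T$ is fixed, so the fluctuations of $\trad(y)$ split over slots. Writing $\psi_t(y)=\log\det(y\H_t\H_t^{\herm}+\tfrac1M\Y_t\Y_t^{\herm})-\log\det(\tfrac1M\Y_t\Y_t^{\herm})$ and, with $V_t(y)$ as in Lemma~\ref{lemma:deter},
\[
v_{N,t}(y)=V_t(y)-\log\det(\G_t\G_t^{\herm})+(M-N)\log\tfrac{M-N}{M}+N,
\]
one checks from \eqref{eq:def-V} that $\mathcal V(y)=\tfrac1{NT}\sum_{t=1}^T v_{N,t}(y)$, hence $N(\trad(y)-\mathcal V(y))=\tfrac1T\sum_{t=1}^T Z_{N,t}(y)$ with $Z_{N,t}(y)=\psi_t(y)-v_{N,t}(y)$; note that $\tfrac1M\Y_t\Y_t^{\herm}=\tfrac1M\G_t\W_t\W_t^{\herm}\G_t^{\herm}$ is a.s.\ invertible since $M>N$ eventually (Assumption~\ref{ass:asymptotics}) and $\lambda_N(\G_t\G_t^{\herm})\ge\sigma^2>0$ (Assumption~\ref{ass:channel-matrix-G}). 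The $Z_{N,t}(y)$ are functions of $\W_t$ alone, hence independent, and the characteristic function of $N(\trad(y)-\mathcal V(y))$ factorises over $t$. It therefore suffices to prove, for each fixed $t$: (i) $\psi_t(y)-\EE\,\psi_t(y)$ converges to a centred Gaussian with an explicit deterministic variance $\vartheta_{N,t}(y)$, with convergence of characteristic functions uniform on compact $z$-sets; and (ii) $\EE\,\psi_t(y)-v_{N,t}(y)\to0$. These give convergence of $N(\trad(y)-\mathcal V(y))$ to a centred Gaussian of variance $\alpha_N(y)=\tfrac1{T^2}\sum_{t=1}^T\vartheta_{N,t}(y)+o(1)$; once part~1 provides $0<\liminf\alpha_N(y)\le\limsup\alpha_N(y)<\infty$, dividing by $\sqrt{\alpha_N(y)}$ (legitimate by the uniformity in (i)) and invoking L\'evy's continuity theorem together with Slutsky's lemma (to absorb the vanishing bias) yields the stated convergence.

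\textbf{The single-slot central limit theorem (i).} I would carry this out with the Gaussian tools of Pastur et al. On the overwhelming-probability event $\mathcal A_\varepsilon=\{\lambda_N(\tfrac1M\Y_t\Y_t^{\herm})\ge\varepsilon\}$, valid for a small $\varepsilon>0$ by the hard-edge behaviour of sample covariance matrices (using $\liminf M/N>1$, $\lambda_N(\G_t\G_t^{\herm})\ge\sigma^2$, and Gaussian concentration of the smallest singular value), one rewrites the log-determinant as a resolvent functional,
\[
\psi_t(y)=\int_0^y\tr\big[\H_t\H_t^{\herm}\Q_t(u)\big]\,du,\qquad
\Q_t(u)=\Big(u\H_t\H_t^{\herm}+\tfrac1M\Y_t\Y_t^{\herm}\Big)^{-1}.
\]
For $\Phi_N(z)=\EE\,e^{\,iz(\psi_t(y)-\EE\psi_t(y))}$, I would differentiate in $z$, insert this representation, and apply the integration-by-parts formula for the i.i.d.\ complex Gaussian entries of $\W_t$, bounding the fluctuation terms that appear by the Poincar\'e--Nash inequality. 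After simplification this produces a differential equation $\Phi_N'(z)=-z\,\vartheta_{N,t}(y)\,\Phi_N(z)+\epsilon_N(z)$ with $\sup_{|z|\le K}|\epsilon_N(z)|\to0$ for every $K>0$, so that $\Phi_N(z)=\exp(-\tfrac{z^2}{2}\vartheta_{N,t}(y))+o(1)$ on compacts. Identifying $\vartheta_{N,t}(y)$ is then a matter of replacing $\Q_t(u)$ by its deterministic equivalent $\T_t(u)$ of Lemma~\ref{lemma:deter} and evaluating the resulting double integral over $(u,u')\in[0,y]^2$; the change of variable $u\mapsto\kappa_t(u)$ should collapse it to the closed-form expression sitting inside the logarithm of $\alpha_N(y)$. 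Throughout, the estimates are transferred from $\mathcal A_\varepsilon$ to genuine expectations by truncation, using polynomial moment bounds on $\|\Q_t(u)\|$ and on $\log\det(\tfrac1M\Y_t\Y_t^{\herm})$ to dispose of $\mathcal A_\varepsilon^c$.

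\textbf{The bias (ii) — the main obstacle.} The hard part will be the identity $\EE\,\psi_t(y)-v_{N,t}(y)\to0$. Lemmas~\ref{lemma:deter} and~\ref{lemma:stieltjes} only deliver $\tfrac1N\log\det(\cdot)-\tfrac1N v_{N,t}(y)\to0$ almost surely, i.e.\ an $\mathcal O(1)$ control of $\log\det(\cdot)-v_{N,t}(y)$, whereas the CLT requires its expectation to vanish, that is, the $\mathcal O(1/N)$ correction to the deterministic equivalent must actually be $o(1/N)$ in the mean. I would obtain this by integrating in $u$ (and, for the $\tfrac1M\Y_t\Y_t^{\herm}$ term, in an added variance parameter) the \emph{second-order} trace estimate $\EE\,\tr[\H_t\H_t^{\herm}\Q_t(u)]-\tr[\H_t\H_t^{\herm}\T_t(u)]=o(1)$ uniformly in $u\in[0,y]$, itself obtained by one Gaussian integration by parts followed by a Poincar\'e--Nash bound, the cancellation of the $\mathcal O(1/N)$ term being a feature of the complex Gaussian model; the same truncation/moment bookkeeping as above is needed, given the possible ill-conditioning of $\tfrac1M\Y_t\Y_t^{\herm}$. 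Producing a deterministic equivalent accurate at order $o(1/N)$ in the mean, uniformly enough to survive truncation, is where I expect the real work to concentrate.

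\textbf{Part 1.} This is a verification rather than a difficulty. The scalar $\kappa_t(y)$ is positive and bounded above and below uniformly in $N$ by analysis of the fixed-point equation \eqref{eq:kappa_t_t} under Assumptions~\ref{ass:channel-matrix-G}--\ref{ass:channel-matrix-Hb}. The argument of the logarithm in $\alpha_N(y)$ is positive because $M>N$ and because $y\H_t\H_t^{\herm}(\G_t\G_t^{\herm})^{-1}$ is similar to a positive semidefinite matrix, so that $\tr(\tfrac{\I_N}{\kappa_t(y)+1}+y\H_t\H_t^{\herm}(\G_t\G_t^{\herm})^{-1})^{-2}\le N(\kappa_t(y)+1)^2<M(\kappa_t(y)+1)^2$; combining this with $1<\liminf M/N\le\limsup M/N<\infty$ pins $\alpha_N(y)$ between two positive constants.
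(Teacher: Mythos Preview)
Your outline is coherent and would likely lead to a proof, but it takes a substantially different route from the paper's.

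\textbf{What the paper does.} The paper's key move is an \emph{algebraic rank reduction} that you do not use. Writing $\Y_t=\U_t\D_t^{1/2}\widetilde\W_t$ with $\widetilde\W_t$ standard $N\times M$ Gaussian, the paper observes that $\trad(y)$ depends only on $(\tfrac1M\widetilde\W_t\widetilde\W_t^{\herm})^{-1}$ through the $p_t$-dimensional eigenspace of $\D_t^{-1/2}\U_t^{\herm}\H_t\H_t^{\herm}\U_t\D_t^{-1/2}$ (Assumption~\ref{ass:channel-matrix-Hb} gives $p_t<N$). By a Wishart identity (Muirhead, Thm.~3.2.11) this reduces the single-slot term to
\[
\log\det\!\Big(\tfrac1s\M^{1/2}\widetilde\W\widetilde\W^{\herm}\M^{1/2}+\I_{p_t}\Big)-\log\det\!\Big(\tfrac1s\widetilde\W\widetilde\W^{\herm}\Big)-\log\det\M,
\]
with $s=M-N+p_t$, $\widetilde\W$ a $p_t\times s$ standard Gaussian, and $\M$ deterministic and well-conditioned. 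The ``bad'' term $-\log\det(\tfrac1s\widetilde\W\widetilde\W^{\herm})$ is now a pure Wishart log-determinant with $p_t/s<1$, for which the CLT (with explicit centring $b_s$) is already known from Bai--Silverstein. The paper then studies $\Psi_s(u,z)=\EE\,e^{\,iu(I_s(z)-V_s(z)+g(\Z)-b_s)}$ and differentiates in the \emph{model parameter} $z$ (not the Fourier variable $u$), following Hachem--Loubaton--Najim. Integration from $z=0$, where $\Psi_s(u,0)$ is exactly the Bai characteristic function, to $z=\rho$ produces the Gaussian limit and the closed-form variance; the final step is a change of variables back to $(\kappa_t,\H_t,\G_t)$.

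\textbf{What this buys compared with your plan.} The rank reduction kills precisely the two difficulties you single out. First, there is no truncation on $\{\lambda_N(\tfrac1M\Y_t\Y_t^{\herm})\ge\varepsilon\}$: after the reduction, the resolvent $(\tfrac{z}{s}\Z\Z^{\herm}+\I_{p_t})^{-1}$ is uniformly bounded by $1$, so Poincar\'e--Nash and integration by parts apply without any event-restriction bookkeeping. Second, the bias problem you flag as the ``main obstacle'' disappears: the deterministic equivalent $V_s(z)$ for $I_s(z)$ is accurate in expectation to order $\mathcal O(s^{-1})$ straight from the Hachem et al.\ framework, and the centring $b_s$ for $g(\Z)$ is exact in the Bai CLT; no $o(1/N)$ refinement of Lemmas~\ref{lemma:deter}--\ref{lemma:stieltjes} is needed. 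Your approach, by contrast, keeps the unregularised $\Q_t(u)$ (note $\H_t\H_t^{\herm}$ has rank $p_t<N$, so the term $u\H_t\H_t^{\herm}$ never regularises), and you would indeed have to carry the truncation and the second-order mean estimate through the whole calculation; this is feasible but noticeably heavier.

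\textbf{A minor point on Part~1.} Your inequality $\tr(\cdot)^{-2}\le N(\kappa_t+1)^2$ shows the logarithm's argument is positive, but does not by itself give $\liminf\alpha_N(y)>0$: plugging it in only yields $\alpha_N(y)\ge\frac1{T^2}\sum_t\big(\log\frac{M}{M-N}-2\log(\kappa_t+1)\big)$, which can be negative. The paper establishes the bounds via the reduced variables $(\gamma,\tilde\gamma,\tilde\delta)$ and a differential identity for $\tilde\delta$, rather than by this direct route.
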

	\begin{proof}
	See Appendix \ref{app:clt_main}.
	\end{proof}

	\label{sec:performance_g_estimator}
	With the above result at hand, we are now in position to derive the fluctuations of the G-estimator. As opposed to $\trad(y)$ though, the G-estimator has no closed-form expression, as the $\yest$'s are solutions of implicit equations. Establishing a CLT for $\gest$ therefore requires to control both the fluctuations of the received matrix $\Y_t$ and of the quantity $\yest$. In the following lemma, we first prove that the fluctuations of $\yest-y_{N,t}$ are of order $\mathcal{O}(M^{-2})$, a rate which will turn out to be sufficiently fast to discard the randomness stemming from $\yest$ in the asymptotic fluctuations of $\gest$.
	
	\begin{lemma}
	For $t\in \{1,\cdots,T\}$, the following estimates hold true, as $M,N,n\to \infty$:
	\begin{enumerate}
	\item ${\rm var}(\yest)=\mathcal{O}(M^{-2})$ ,
	\item $\EE\, \yest= y_{N,t}+\mathcal{O}(M^{-2})$ .
	\end{enumerate}
	\label{th:y}
	\end{lemma}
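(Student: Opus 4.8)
The plan is to analyze the implicit equation \eqref{eq:yest} defining $\yest$ by treating it as a smooth fixed-point relation perturbed around the deterministic equation \eqref{eq:y-def}--\eqref{eq:y-exp} defining $y_{N,t}$, and then transfer concentration and bias estimates from the underlying resolvent-type functional $\frac1M\tr\H_t\H_t^{\herm}(\yest\,\H_t\H_t^{\herm}+\frac1M\Y_t\Y_t^{\herm})^{-1}$ to $\yest$ itself. Write
$$
g_t(y)=\frac{M-N}{M}+\frac{y}{M}\tr\H_t\H_t^{\herm}\left(y\,\H_t\H_t^{\herm}+\frac1M\Y_t\Y_t^{\herm}\right)^{-1},
$$
so that $\yest$ solves $y=g_t(y)$, while $y_{N,t}$ solves the corresponding deterministic fixed-point equation $y=\bar g_t(y)$, where $\bar g_t$ is obtained by replacing $\frac1M\Y_t\Y_t^{\herm}$ by its deterministic equivalent $\frac{\G_t\G_t^{\herm}}{1+\kappa_t(y)}$. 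First I would establish, using the arguments of Lemma \ref{lemma:deter}, that $\yest$ and $y_{N,t}$ both lie in a fixed compact subinterval of $(0,1)$ with overwhelming probability (the lower bound coming from $\frac{M-N}{M}$ and Assumption \ref{ass:asymptotics}, the upper bound from $g_t(y)<1$), and that on this interval $g_t'(y)$ is uniformly bounded away from $1$, so that the fixed point depends in a uniformly Lipschitz (indeed $C^1$) way on the functional $\frac1M\tr(\cdot)$. This reduces the problem to controlling the variance and bias of $\delta_t(y):=g_t(y)-\bar g_t(y)$ uniformly in $y$ on the relevant compact, via an implicit-function/Taylor expansion: $\yest-y_{N,t}=\frac{\delta_t(y_{N,t})}{1-\bar g_t'(y_{N,t})}+(\text{higher order in }\delta_t)$.

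The core of the work is then a second-order analysis of $\delta_t(y)$, which I would carry out with the Gaussian tools (integration by parts and the Poincaré--Nash inequality) already invoked for Theorem \ref{th:clt_main}. For the variance bound, $\mathrm{var}\big(\tr\, \A\, \Q_t(y)\big)=\mathcal O(1)$ for deterministic bounded $\A$ by the Poincaré--Nash inequality applied to $\W_t$ (each entry of $\W_t$ contributes $\mathcal O(M^{-1})$ after the $\frac1M$ normalization, summed over $nM$ entries gives $\mathcal O(1)$); dividing by $M$ inside $g_t$ yields $\mathrm{var}(g_t(y))=\mathcal O(M^{-2})$, and the Lipschitz dependence transfers this to $\mathrm{var}(\yest)=\mathcal O(M^{-2})$, proving item 1. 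For the bias, item 2, one needs the finer fact that the expected resolvent functional matches its deterministic equivalent up to $\mathcal O(M^{-1})$ (not merely $o(1)$): that is, $\EE\,\frac1M\tr\H_t\H_t^{\herm}\Q_t(y)=\frac1M\tr\H_t\H_t^{\herm}\T_t(y)+\mathcal O(M^{-2})$ after the extra $\frac1M$, which is the standard "second-order deterministic equivalent" estimate obtained by a more careful integration-by-parts bookkeeping of the remainder terms (controlling $\EE|\frac1M\tr\A\Q_t-\frac1M\tr\A\EE\Q_t|$ and $\|\EE\Q_t-\T_t\|$-type quantities). Combined with the uniform $C^1$ control of the fixed point, $\EE\,\delta_t(y_{N,t})=\mathcal O(M^{-2})$ and a Taylor expansion give $\EE\,\yest=y_{N,t}+\mathcal O(M^{-2})$.

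The main obstacle is the bias estimate of item 2: establishing the $\mathcal O(M^{-1})$ (pre-normalization) rate for $\EE\,\Q_t(y)-\T_t(y)$ in the relevant weak sense requires the refined version of the deterministic-equivalent computation rather than the crude consistency of Lemma \ref{lemma:deter}, and one must be careful that the implicit dependence of $\yest$ on $\Y_t$ does not reintroduce a first-order bias — i.e. one must expand the fixed-point equation itself, not just plug a deterministic $y$ into the resolvent bound. A secondary technical point is making all estimates uniform in $y$ over the compact interval (so that evaluating at the random argument $\yest$ or at $y_{N,t}$ is legitimate) and handling the event where $\yest$ or the eigenvalues of $\frac1M\Y_t\Y_t^{\herm}$ stray outside the good region, which is dealt with by the usual exponential-probability bounds on extreme eigenvalues of Wishart-type matrices under Assumption \ref{ass:channel-matrix-G}.
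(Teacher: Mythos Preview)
Your proposal is correct and uses the same essential ingredients as the paper: the Poincar\'e--Nash inequality for the variance, the second-order deterministic equivalent $\EE\frac1M\tr\A\Q_t(y)=\frac1M\tr\A\T_t(y)+\mathcal O(M^{-2})$ for the bias (this is exactly Proposition~\ref{prop:approximation} in Appendix~\ref{app:clt_main}), and an implicit-function/contraction analysis of the fixed-point equation to pass from the resolvent functional to $\yest$.

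The organization differs in two minor ways. For the variance, the paper applies Poincar\'e--Nash \emph{directly} to $\yest$: by the implicit function theorem $\partial\yest/\partial Y_{ij}^*=(\partial f/\partial Y_{ij}^*)(\yest)\big/(\partial f/\partial y)(\yest)$ with $f(y)=g_t(y)-y$, and then shows $|\partial f/\partial y(\yest)|\ge (M-N)/(M\yest)$ via the auxiliary function $R(y)=-\log\det\Q_t(y)+(M-N)\log y-My$ and the identity $R''(\yest)=Mf'(\yest)/\yest$. This only requires the lower bound on $|1-g_t'|$ \emph{at the single point} $\yest$, whereas your Lipschitz-transfer argument needs it uniformly on the interval between $\yest$ and $y_{N,t}$; the paper's route is therefore slightly cleaner here, though yours also works (e.g.\ via the concavity of $g_t$ noted in the proof of Lemma~\ref{lemma:approx-point-fixe}). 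For the bias, the paper expands around $\EE\yest$ rather than $y_{N,t}$: it uses the resolvent identity and the variance bound from part~1 to show $\EE\yest=(1+\kappa_t(\EE\yest))^{-1}+\mathcal O(M^{-2})$, and then compares this to $y_{N,t}=(1+\kappa_t(y_{N,t}))^{-1}$ by a contraction argument on $\kappa_t$, obtaining $(\EE\yest-y_{N,t})=\gamma_{N,t}(\EE\yest-y_{N,t})+\mathcal O(M^{-2})$ with $\limsup\gamma_{N,t}<1$. Your direct Taylor expansion around $y_{N,t}$ reaches the same conclusion and is arguably more transparent; both routes ultimately rest on the same $\mathcal O(M^{-2})$ accuracy of the deterministic equivalent that you correctly flag as the crux.
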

	\begin{proof}
	See Appendix \ref{app:proofy}.
	\end{proof}
	
	We are now in position to state the central limit theorem for $\gest$.
	\begin{theorem}
	Let Assumptions \ref{ass:channel}-\ref{ass:channel-matrix-Hb} hold true. Then,
	$$
	\frac{N}{\sqrt{\theta_N}}(\gest-\erg)\xrightarrow[N\to\infty]{\mathcal{D}}\mathcal{N}(0,1)
	$$
	where $\theta_N$ given by
	\begin{equation}
	\theta_N=\frac{1}{T^2}\sum_{t=1}^T 2\log(My_{N,t})-\log\left[(M-N)\left(M-\tr\left(\I_N+\H_t\H_t^{\herm}(\G_t\G_t^{\herm})^{-1}\right)^{-2}\right)\right]
	\label{eq:theta}
	\end{equation}
	which is a well-defined quantity which satisfies
	$$
	0\ <\ \liminf_{M,N,n\rightarrow\infty} \theta_N\ \leq\ \limsup_{M,N,n\rightarrow\infty} \theta_N\ <\ +\infty\ .
	$$
	\label{th:clt_gestimator}
	\end{theorem}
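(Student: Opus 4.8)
The plan is to derive the fluctuations of $\gest$ from the central limit theorem for $\trad(y)$ in Theorem~\ref{th:clt_main} by a linearization argument around the deterministic point $y_{N,t}$, using Lemma~\ref{th:y} to show that the randomness carried by $\yest$ is negligible at the relevant scale. The starting observation is that, by the identity \eqref{eq:C1-est-y} established in the proof of Theorem~\ref{th:gestimator}, the estimator $\gest$ is, up to the deterministic bias corrections already accounted for, essentially $\frac{1}{T}\sum_t \trad_t(\yest)$ evaluated at the random argument $\yest$ rather than at the deterministic $y_{N,t}$ which satisfies the fixed-point equation \eqref{eq:y-def}. Here $\trad_t(y)$ denotes the $t$-th summand of \eqref{eq:def-trad-y}. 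Since $\mathcal{V}(y_{N,t})$ coincides (per slot) with $\II_{t,1}-\II_{t,2}$ precisely because $y_{N,t}$ solves $y=1/(1+\kappa_t(y))$, the difference $\gest-\erg$ is, after this reduction, a sum of a term governed by Theorem~\ref{th:clt_main} at $y=y_{N,t}$ and a remainder driven by $\yest-y_{N,t}$.

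First I would make the reduction precise: write $\gest - \erg = A_N + B_N$ where $A_N = \frac{1}{T}\sum_t\big(\trad_t(y_{N,t}) - \mathcal{V}_t(y_{N,t})\big)$ (with $\mathcal{V}_t$ the per-slot version of \eqref{eq:def-V}) and $B_N$ collects (i) the error in \eqref{eq:C1-est-y}, which is $o(1)$ a.s., and (ii) the discrepancy between evaluating the various $\log\det$, $\log$ and linear terms at $\yest$ versus at $y_{N,t}$. For (ii), I would perform a first-order Taylor expansion in $y$ of the smooth function $y\mapsto \frac{1}{N}\log\det(\frac1M\Y_t\Y_t^\herm + y\H_t\H_t^\herm) + \frac{M-N}{N}\log y + \frac{M}{N}(1-y)$; its derivative at $y=y_{N,t}$ vanishes up to $o(1)$ (this is exactly the content of the fixed-point characterization of $\yest$ and $y_{N,t}$, cf.\ \eqref{eq:yest} and \eqref{eq:y-def}), so the expansion contributes a term of order $(\yest-y_{N,t})^2$ times a bounded second derivative, plus a linear term whose coefficient is itself $o(1)$. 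Combining the vanishing-derivative fact with Lemma~\ref{th:y}, which gives $\operatorname{var}(\yest)=\mathcal{O}(M^{-2})$ and $\EE\,\yest = y_{N,t} + \mathcal{O}(M^{-2})$, shows $N\cdot B_N \xrightarrow{\mathbb{P}} 0$; here one uses the spectral bounds of Assumptions~\ref{ass:channel-matrix-G}--\ref{ass:channel-matrix-Hb} to control the derivatives uniformly. By Slutsky, $\frac{N}{\sqrt{\theta_N}}(\gest-\erg)$ has the same limit as $\frac{N}{\sqrt{\theta_N}}A_N$.

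Next I would identify the limiting variance. By Theorem~\ref{th:clt_main}, $\frac{N}{\sqrt{\alpha_N(y)}}(\trad(y)-\mathcal{V}(y)) \xrightarrow{\mathcal D}\mathcal{N}(0,1)$; the summand structure over $t$ (the slots are independent by Assumption~\ref{ass:channel}) means $A_N$ has asymptotic variance $N^{-2}\theta_N$ where $\theta_N$ must be the expression obtained from $\alpha_N(y)$ by substituting $y=y_{N,t}$ slot-by-slot and using $1/(1+\kappa_t(y_{N,t})) = y_{N,t}$. Indeed, plugging $\kappa_t(y_{N,t}) = (1-y_{N,t})/y_{N,t}$ into $\alpha_N$ turns $(\kappa_t+1)^{-1}$ into $y_{N,t}$, so $M(\kappa_t+1)^2 - \tr(\cdots)^{-2}$ becomes $M y_{N,t}^{-2} - \tr(y_{N,t}\I_N + y_{N,t}\H_t\H_t^\herm(\G_t\G_t^\herm)^{-1})^{-2} = y_{N,t}^{-2}\big(M - \tr(\I_N+\H_t\H_t^\herm(\G_t\G_t^\herm)^{-1})^{-2}\big)$, and the $2\log M$ terms recombine with the $-2\log y_{N,t}$ coming from $\log(y_{N,t}^{-2})$ to give $2\log(My_{N,t})$, exactly \eqref{eq:theta}. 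The boundedness $0<\liminf\theta_N\le\limsup\theta_N<\infty$ then follows from part~1 of Theorem~\ref{th:clt_main} together with the fact that $y_{N,t}$ is bounded and bounded away from zero — the latter because $y_{N,t} = \frac1M\tr\big(\H_t\H_t^\herm(\H_t\H_t^\herm+\G_t\G_t^\herm)^{-1}\big) + \frac{M-N}{M}$ by \eqref{eq:y-exp}, where the second term is bounded below by Assumption~\ref{ass:asymptotics} and the first is nonnegative and bounded above using Assumptions~\ref{ass:channel-matrix-G} and \ref{ass:channel-matrix-H}.

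The main obstacle I anticipate is the rigorous justification of step (ii) of $B_N$: one must expand $\trad_t$ at the \emph{random} point $\yest$ and show the linear term genuinely vanishes fast enough, which requires knowing not merely that $\yest - y_{N,t}\to 0$ but that the derivative of the relevant functional, evaluated at a deterministic point near $y_{N,t}$, is $o(1)$ with a quantitative rate compatible with $\operatorname{var}(\yest)=\mathcal{O}(M^{-2})$ — i.e.\ the product of the (small) derivative and the (small) fluctuation of $\yest$, after multiplication by $N$, must still vanish. This is where one genuinely needs Lemma~\ref{th:y}'s $M^{-2}$ rate rather than a cruder $o(1)$ bound, and where uniform control of second derivatives via the spectral-norm assumptions and the separation of $\frac1M\Y_t\Y_t^\herm$ from zero (which requires $M/N>1$, Assumption~\ref{ass:asymptotics}, to keep $(\frac1M\Y_t\Y_t^\herm)^{-1}$ bounded a.s.) enters. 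Once this tightness/negligibility bookkeeping is done, the CLT for $\gest$ follows from Theorem~\ref{th:clt_main} and Slutsky's lemma.
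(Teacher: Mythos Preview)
Your overall strategy is the same as the paper's --- linearize in $y$, use Lemma~\ref{th:y} to kill the remainder, then invoke Theorem~\ref{th:clt_main} with $y=y_{N,t}$ and the identity $1+\kappa_t(y_{N,t})=1/y_{N,t}$ to read off $\theta_N$ --- and your variance computation is correct. But you have missed the key simplification that makes the ``main obstacle'' you worry about disappear entirely.

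The point is this: the derivative of the function
\[
\mathcal{J}_t(y)=\frac1N\log\det\!\left(y\H_t\H_t^{\herm}+\frac1M\Y_t\Y_t^{\herm}\right)+\frac{M-N}{N}\log y-\frac{M}{N}y+\text{($y$-independent terms)}
\]
vanishes \emph{exactly} at $y=\yest$, not merely up to $o(1)$. Indeed,
\[
\mathcal{J}_t'(y)=\frac1N\tr\H_t\H_t^{\herm}\Q_t(y)+\frac{M-N}{Ny}-\frac{M}{N},
\]
and multiplying the defining equation \eqref{eq:yest} of $\yest$ by $M/(N\yest)$ shows $\mathcal{J}_t'(\yest)=0$ identically. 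The paper therefore Taylor-expands $\mathcal{J}_t$ around the \emph{random} point $\yest$ (rather than around the deterministic $y_{N,t}$ as you do), obtaining
\[
N\mathcal{J}_t(y_{N,t})=N\mathcal{J}_t(\yest)+\frac{N}{2}(y_{N,t}-\yest)^2\,\mathcal{J}_t''(\yest)+\frac{N}{6}(y_{N,t}-\yest)^3\,\mathcal{J}_t'''(\xi_{N,t}),
\]
with no linear term at all. The second and third derivatives are uniformly bounded on the deterministic compact interval $[\tfrac{M-N}{M},1+\tfrac{M-N}{M}]$ to which $\yest$ belongs by \eqref{eq:yest}, and Lemma~\ref{th:y} gives $N\EE(\yest-y_{N,t})^2=\mathcal{O}(M^{-1})$, so both remainder terms converge to zero in probability. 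Slutsky then reduces the problem to the CLT for $N\mathcal{J}_t(y_{N,t})$, which is Theorem~\ref{th:clt_main} applied with $\widetilde{\H}_t=\sqrt{y_{N,t}}\H_t$.

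Your route --- expanding at $y_{N,t}$ and controlling the linear term as a product of two $O_P(M^{-1})$ quantities --- can be made to work, but it requires the additional estimate that $\mathcal{J}_t'(y_{N,t})=O_P(M^{-1})$ (a trace concentration bound), whereas the paper's choice of expansion point removes the linear term for free and needs only the boundedness of higher derivatives plus Lemma~\ref{th:y}.
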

	\begin{proof} Consider the function $\mathcal{J}_t(y)$ defined for $y>0$
	as:
	\begin{multline*}
	\mathcal{J}_t(y)=\frac{1}{N}\log\det\left(y\H_t\H_t^{\herm}+\frac{\Y_t\Y_t^{\herm}}{M}\right)
	+\frac{M-N}{N}\left[\log\left(\frac{M}{M-N}y\right)+1\right]-\frac{M}{N}y
	-\log\det\left(\frac{\Y_t\Y_t^{\herm}}{M}\right)\
	.
	\end{multline*}
	Then $\gest=\frac 1T \sum_{t=1}^T {\mathcal J}_t(\hat{y}_{N,t})$. Since
	all the random variables $({\mathcal J}_t(\hat{y}_{N,t}),\ 1\le t\le T)$
	are independent, it is sufficient to prove a CLT for
	${\mathcal J}_t(\hat{y}_{N,t}) $, for a given $t\in \{1,\cdots, T\}$. In order to
	handle the randomness of $\hat{y}_{N,t}$, we shall perform a Taylor
	expansion of ${\mathcal J}_t$ around $\hat{y}_{N,t}$. Recall the following differentiation formula
	$$
	\frac d{dx} \log \det A(x) = \tr A'(x) A^{-1}(x).
	$$
	A direct application of
	this formula, together with the mere definition of $\hat{y}_{N,t}$ yields
	$$
	\frac{d\, {\mathcal J}_t}{d\, y} \left(\hat{y}_{N,t}\right) = 0\ .
	$$
	Hence, the Taylor expansion writes:
	\begin{equation}
	N\mathcal{J}_t(y_{N,t})
	=N\mathcal{J}_t(\hat{y}_{N,t})+N\frac{(y_{N,t}-\hat{y}_{N,t})^2}2
	\frac{d^2 \mathcal{J}_t}{d y^2}(\hat{y}_{N,t})+
	N\,\frac{(y_{N,t}-\hat{y}_{N,t})^3}{6} \frac{d^3
	\mathcal{J}_t}{d
	y^3}(\xi_{N,t})\ ,
	\label{eq:taylor}
	\end{equation}
	where $\xi_{N,t}$ lies between $y_{N,t}$ and $\hat{y}_{N,t}$. The definition \eqref{eq:yest} of $\yest$ yields
	$$
	\frac{M-N}{M} \leq \yest \leq 1+\frac{M-N}{M}\ .
	$$
	In particular, $\yest$ uniformly belongs to a fixed compact
        interval, and so does $y_{N,t}$ for similar reasons. One can
        easily prove that the second and third derivatives of
        $\mathcal{J}_t(y)$ are uniformly bounded on the union of these
        intervals.  This result combined with the fact that
        $N\EE(\yest-y_{N,t})^2=\mathcal{O}(M^{-1})$ implies that the
        last two terms in the right hand side (r.h.s.) of
        \eqref{eq:taylor} converge to zero in probability.
	By Slutsky's lemma \cite{VAN00}, it suffices to establish
        the CLT for $N\mathcal{J}(y_{N,t})$ instead of
        $N\mathcal{J}(\yest)=N\hat{\mathcal J}_G$. This is extremely helpful
        since unlike $\yest$ which is random, $y_{N,t}$ is
        deterministic. The result is thus obtained by applying Theorem
        \ref{th:clt_main} and noticing that
        $\kappa(y_{N,t})+1=\frac{1}{y_{N,t}}$. Note that although
        being valid only for fixed $y$, Theorem \ref{th:clt_main}
        could be applied by considering the slightly different model
        $\widetilde{\bf H}_t=\sqrt{{y}_{N,t}}\H_t$.
	\end{proof}

	\section{Simulations}

	In the simulations, we consider the case where a mobile terminal with $N=4$ antennas receives during a sensing period of $T$ slots data stemming from an $n_0=4$ antenna secondary transmitter. We also set the number of symbols for sensing per slot to $M=15$.  We assume that the communication link is degraded by both additive white Gaussian noise with covariance $\sigma^2\I_N$  and interference caused by $K=8$ mono-antenna users.  Hence, this scenario follows the model described by \eqref{eq:model_sim}, where for each $t$, the vectors $\G_{t,k},~k\in\left\{1,\cdots,8\right\}$ respectively represent the channel from the interferers to the receiver, whereas $\H_t$ represent the channel with the transmitter. Denote by ${\bf B}_t=\left[\G_{t,1},\cdots,\G_{t,8}\right]$. In the simulations, $\H_t$ and $\B_t$ are randomly chosen as Gaussian matrices and remain constant during the Monte Carlo averaging. To control the interference level, we scale the matrix $\B_t$ for each $t$ so that the signal-to-interference ratio SIR be given by
	\begin{align*}
		{\rm SIR}_t\triangleq\frac{\tr\H_t\H_t^{\herm}}{\tr\B_t\B_t^{\herm}} = \alpha.
	\end{align*}
	In a first experiment we set $T=10$ and ${\rm SNR}=\frac{1}{\sigma^2}=10~{\rm dB}$ and represent in Figure \ref{fig:theoretical_empirical} the theoretical and empirical normalized mean square errors for the G-estimator with respect to the SIR given respectively by:
	\begin{align*}
		{\rm MSE}_{\rm th}&=\frac{\theta_N}{\erg^2}\ ,\\
		{\rm MSE}_{\rm g,emp}&=\frac{1}{P}\sum_{i=1}^{P}\frac{N^2(\gest^{i}-\erg)^2}{\erg^2}\ ,
	\end{align*}
	where $\gest^{i}$ is the G-estimator at the $i$-th Monte Carlo iteration and $P=10\,000$ is the total number of iterations. We also display in the same graph the empirical normalized mean square error of the SE-estimator defined as
	\begin{align*}
		{\rm MSE}_{\rm t,emp}=\frac{1}{P}\sum_{i=1}^{P}\frac{N^2(\trad^{i}-\erg)^2}{\erg^2}.
	\end{align*}
	We observe that the G-estimator exhibits better performance for the whole SIR range. These results are somewhat in contradiction with the intuition that a low level of interference tends to have a small impact on the accuracy of the SE-estimator. The reason is that the mutual information depends rather on the inverse of the covariance of the interference and noise signals $\B_t\B_t^{\herm}+\sigma^2\I_N$, as
	\begin{align*}
	\log\det(\H_t\H_t^{\herm}+\B_t\B_t^{\herm}+\sigma^{2}\I_N)-\log\det(\B_t\B_t^{\herm}+\sigma^{2}\I_N)=\log\det(\H_t\H_t^{\herm}(\B_t\B_t^{\herm}+\sigma^{2}\I_N)^{-1}+\I_N).
\end{align*}
		
	We study in a second experiment the effect of $T$ when the SNR and the SIR are set respectively to $10$ dB and $-10$ dB. Figure \ref{fig:theoretical_empirical_T} depicts the obtained results. We observe that, since the SE-estimator is asymptotically biased, its mean square error does not significantly decrease with $T$ and remains almost unchanged, whereas the G-estimator exhibits a low variance which drops linearly with $T$. Finally, to assess the Gaussian behavior of the proposed estimator, we represent in Figure \ref{fig:histogram_gestimator} its corresponding histogram. We note a good fit between theoretical and empirical results although the system dimensions are small.
	
	\label{sec:simulations}
	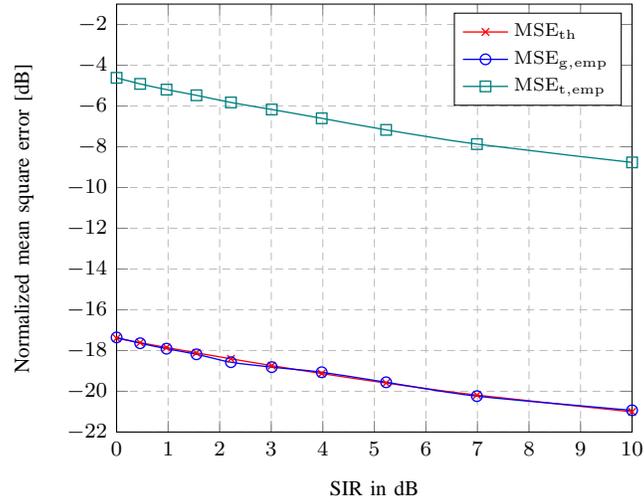
\begin{figure}[htbp]
	\centering
	\begin{tikzpicture}[font=\footnotesize]
	\renewcommand{\axisdefaulttryminticks}{8}
	\tikzstyle{every major grid}+=[style=densely dashed]
	\tikzstyle{every axis legend}+=[cells={anchor=west},fill=white,
	at={(0.98,0.98)}, anchor=north east, font=\scriptsize ]
	\begin{axis}[
	grid=major,
	xmajorgrids=true,
	ymajorgrids=true,
	xlabel={SIR in dB},
	ylabel={Normalized mean square error [dB]},
	xmin=0,
	xmax=10,
	ymin=-22,
	ymax=-1
	]
	\addplot[smooth,red,line width=0.5pt,mark=x] plot coordinates{ (10.000000,-21.005664)(6.989700,-20.186123)(5.228787,-19.595643)(3.979400,-19.128345)(3.010300,-18.739446)(2.218487,-18.405344)(1.549020,-18.111934)(0.969100,-17.850064)(0.457575,-17.613429)(-0.000000,-17.397492) };
	\addplot[smooth,blue,line width=0.5pt,mark=o] plot coordinates{(10.000000,-20.932585)(6.989700,-20.237732)(5.228787,-19.557576)(3.979400,-19.064755)(3.010300,-18.814253)(2.218487,-18.572156)(1.549020,-18.184464)(0.969100,-17.908802)(0.457575,-17.641658)(-0.000000,-17.355369) };
 	\addplot[smooth,teal,line width=0.5pt,mark=square] plot coordinates{(10.000000,-8.767107)(6.989700,-7.869041)(5.228787,-7.171355)(3.979400,-6.603648)(3.010300,-6.171194)(2.218487,-5.825293)(1.549020,-5.473996)(0.969100,-5.194771)(0.457575,-4.914932)(-0.000000,-4.617172)};
	\legend{ ${\rm MSE}_{\rm th}$, ${\rm MSE}_{\rm g,emp}$, ${\rm MSE}_{\rm t,emp}$}
 	\end{axis}
 	\end{tikzpicture}
 	\caption{Empirical and theoretical variances with respect to the SIR.}
 	\label{fig:theoretical_empirical}
 	\end{figure}
 	
 	\begin{figure}[htbp]
 	\centering
 	\begin{tikzpicture}[font=\footnotesize]
 	\renewcommand{\axisdefaulttryminticks}{8}
 	\tikzstyle{every major grid}+=[style=densely dashed]
 	\tikzstyle{every axis legend}+=[cells={anchor=west},fill=white,
 	at={(0.98,0.98)}, anchor=north east, font=\scriptsize ]
 	\begin{semilogxaxis}[
 	grid=major,
 	xmajorgrids=true,
 	ymajorgrids=true,
 	xlabel={$T$},
 	ylabel={Normalized mean square error [dB]},
 	xmin=10,
 	xmax=100,
 	ymin=-32,
 	ymax=0
 	]
 	\addplot[smooth,red,line width=0.5pt,mark=x] plot coordinates{(10.000000,-20.856221)(20.000000,-24.016069)(30.000000,-25.712690)(40.000000,-27.001156)(50.000000,-27.955606)(60.000000,-28.679551)(70.000000,-29.398445)(80.000000,-30.010374)(90.000000,-30.424841)(100.000000,-30.903397) };
 	\addplot[smooth,blue,line width=0.5pt,mark=o] plot coordinates{(10.000000,-20.816409)(20.000000,-24.062444)(30.000000,-25.708412)(40.000000,-26.967560)(50.000000,-27.945496)(60.000000,-28.664657)(70.000000,-29.475521)(80.000000,-29.960680)(90.000000,-30.473264)(100.000000,-30.855499) 	};
 	\addplot[smooth,teal,line width=0.5pt,mark=square] plot coordinates{
 (10.000000,-8.556748)(20.000000,-8.894696)(30.000000,-8.887953)(40.000000,-8.922433)(50.000000,-8.961933)(60.000000,-8.892286)(70.000000,-8.910606)(80.000000,-8.984808)(90.000000,-8.874300)(100.000000,-8.924327)
 	};
	\legend{ ${\rm MSE}_{\rm th}$, ${\rm MSE}_{\rm g,emp}$, ${\rm MSE}_{\rm t,emp}$}
 	\end{semilogxaxis}
 	\end{tikzpicture}
 	\caption{Empirical and theoretical variances with respect to $T$.}
 	\label{fig:theoretical_empirical_T}
 	\end{figure}
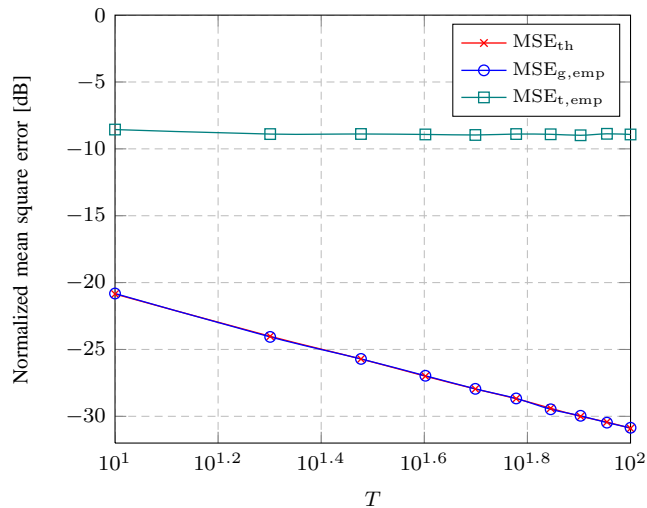

 	\begin{figure}
 	\centering
 	\begin{tikzpicture}[font=\footnotesize]
 	\renewcommand{\axisdefaulttryminticks}{4}
 	
 	\begin{axis}[
 	xmin=-4,
 	ymin=0,
 	xmax=4,
 	ymax=0.5,
 	bar width=4pt,
 	grid=major,
 	ymajorgrids=false,
 	scaled ticks=true,
 	xlabel={$\frac{N}{\sqrt{\theta_N}}(\gest-\erg)$},
 	ylabel={Frequency of occurence},
 	]
 	\addplot+[ybar,mark=none,color=black,fill=blue!40!white] coordinates{
 (-5.000000,0.000000)(-4.800000,0.000000)(-4.600000,0.000000)(-4.400000,0.000000)(-4.200000,0.000000)(-4.000000,0.000000)(-3.800000,0.000000)(-3.600000,0.000000)(-3.400000,0.000000)(-3.200000,0.001000)(-3.000000,0.005000)(-2.800000,0.006000)(-2.600000,0.014000)(-2.400000,0.018000)(-2.200000,0.030000)(-2.000000,0.031000)(-1.800000,0.084000)(-1.600000,0.102000)(-1.400000,0.177000)(-1.200000,0.207000)(-1.000000,0.260000)(-0.800000,0.329000)(-0.600000,0.356000)(-0.400000,0.378000)(-0.200000,0.408000)(0.000000,0.415000)(0.200000,0.366000)(0.400000,0.372000)(0.600000,0.316000)(0.800000,0.264000)(1.000000,0.223000)(1.200000,0.164000)(1.400000,0.149000)(1.600000,0.095000)(1.800000,0.078000)(2.000000,0.051000)(2.200000,0.039000)(2.400000,0.026000)(2.600000,0.018000)(2.800000,0.007000)(3.000000,0.004000)(3.200000,0.006000)(3.400000,0.001000)(3.600000,0.000000)(3.800000,0.000000)(4.000000,0.000000)(4.200000,0.000000)(4.400000,0.000000)(4.600000,0.000000)(4.800000,0.000000)(5.000000,0.000000) 	};
 	\addplot[smooth,red,line width=0.5pt] plot coordinates{
 	(-5.000000,0.000001) (-4.800000,0.000004) (-4.600000,0.000010) (-4.400000,0.000025) (-4.200000,0.000059) (-4.000000,0.000134) (-3.800000,0.000292) (-3.600000,0.000612) (-3.400000,0.001232) (-3.200000,0.002384) (-3.000000,0.004432) (-2.800000,0.007915) (-2.600000,0.013583) (-2.400000,0.022395) (-2.200000,0.035475) (-2.000000,0.053991) (-1.800000,0.078950) (-1.600000,0.110921) (-1.400000,0.149727) (-1.200000,0.194186) (-1.000000,0.241971) (-0.800000,0.289692) (-0.600000,0.333225) (-0.400000,0.368270) (-0.200000,0.391043) (0.000000,0.398942) (0.200000,0.391043) (0.400000,0.368270) (0.600000,0.333225) (0.800000,0.289692) (1.000000,0.241971) (1.200000,0.194186) (1.400000,0.149727) (1.600000,0.110921) (1.800000,0.078950) (2.000000,0.053991) (2.200000,0.035475) (2.400000,0.022395) (2.600000,0.013583) (2.800000,0.007915) (3.000000,0.004432) (3.200000,0.002384) (3.400000,0.001232) (3.600000,0.000612) (3.800000,0.000292) (4.000000,0.000134) (4.200000,0.000059) (4.400000,0.000025) (4.600000,0.000010) (4.800000,0.000004) (5.000000,0.000001)
 	};
 	\legend{{Histogram},{Theory}}
 	\end{axis}
 	\end{tikzpicture}
 	\caption{Histogram of $\frac{N}{\sqrt{\theta_N}}(\gest-\erg)$.}
 	\label{fig:histogram_gestimator}
 	\end{figure}
 	\section{Conclusion}
	\label{sec:conclusion}
 	In this paper, we have proposed a novel G-estimator for fast estimation of the MIMO mutual information in the presence of unknown interference in the case where the number of available observations is of the same order as the number of receive antennas. Based on large random matrix theory, we have proved that the G-estimator is asymptotically unbiased and consistent, and have studied its fluctuations. Numerical simulations have been provided and strongly support the accuracy of our results even for usual system dimensions.

 	\section*{Acknowledgment}
 	The authors would like to thank Jakob Hoydis for useful discussions.
 	\appendices
 	
 	\section{Proof of Lemma \ref{lemma:stieltjes}}
 	\label{app:stieltjes}

        Recall that if $\mathbb{P}$ is a probability distribution on
        $\mathbb{R}^+$, then the Stieltjes transform $m(z)$ of
        $\mathbb{P}$ is defined as
	\begin{equation}\label{eq:stieltjes}
	m(z)=\int_{\mathbb{R}} \frac{\mathbb{P}(d\lambda)}{\lambda-z}\ ,\quad z\in \mathbb{C} \setminus \mathbb{R}^+\ .
	\end{equation}
	For example, the Stieltjes transform $m_{\frac 1M \Y_t\Y_t^{\herm}}$ associated to the empirical
	distribution of the eigenvalues of the Hermitian matrix
	$\frac 1M \Y_t\Y_t^{\herm}$ is simply the normalized trace of the associated
	resolvent:
	$$
	m_{\frac 1M \Y_t\Y_t^{\herm}}(z) 
	=\frac{1}{N}\sum_{i=1}^N \frac{1}{\lambda_i-z}= \frac 1N \tr \left(\frac 1M \Y_t\Y_t^{\herm} - z{\bf I}_N\right)^{-1}\ ,
	$$
	where $\lambda_1,\cdots,\lambda_N$ denotes the eigenvalues of $\frac 1M \Y_t\Y_t^{\herm}$.
	Since their introduction by Mar\v{c}enko and Pastur in their seminal
	paper \cite{MAR67}, Stieltjes transforms have proved to be a highly
	efficient tool to study the spectrum of large random matrices. From
	an estimation point of view, Stieltjes transform are, in the large
	dimension regime of interest, consistent estimates of well-identified
	deterministic quantities. Therefore, the approach below consists in
	expressing the parameters of interest as functions of the
	Stieltjes transform of the eigenvalue distribution of
	$\frac 1M \Y_t\Y_t^{\herm}$.

 	Using the same eigenvalue decomposition as in Appendix
        \ref{app:lemma_deter}, we can prove that
        $\Y_t=\U_t\D_t^{\frac{1}{2}}\widetilde{\W}_t$ where
        $\widetilde{\W}_t$ is an $N\times M$ standard Gaussian matrix,
        and where $\D_t$ is a diagonal matrix with the same
        eigenvalues as $\G_t \G_t^{\herm}$. In the sequel, if ${\bf
          A}$ is a $p\times p$ hermitian matrix, denote by $F^{\bf A}$
        the empirical distribution of its eigenvalues, i.e. $F^{\bf A}
        = \frac 1p \sum_{i=1}^p \delta_{\lambda_i({\bf A})}$, and by
        $m_{\bf A}$ the associated Stieltjes transform.
 	
        Notice that due to Assumption \ref{ass:channel-matrix-G}, the following decomposition holds true:
$$
\G_t \G_t^\herm = \sigma^2 \I_N + \GG_t ,
$$
where $\GG_t$ is a positive semi-definite matrix (simply write $\G_t \G_t^\herm = \sigma^2\I_N + \U_t (\D_t - \sigma^2\I_N) \U_t^\herm$).

 	Notice that $m_{\D_t}(z)=m_{\GG_t}(z-\sigma^2)$. Using
 	this fact, and the result in {\cite[Theorem 1.1]{SIL95}}, one can easily prove that
 	$m_{\frac{1}{M}\Y_t^{\herm}\Y_t}$ satisfies:
 	$$
 	\forall z\in \mathbb{C}\setminus \mathbb{R}^+\ ,\qquad m_{\frac{1}{M}\Y_t^{\herm}\Y_t}(z)-\underline{m}(z)\xrightarrow[M,N,n\to\infty]{\textrm{a.s.}}0\ ,
 	$$
 	where $\underline{m}(z)$ is the unique Stieltjes transform of
 	a probability distribution $\underline{F}$, solution of the following
 	functional equation:
 	\begin{equation}\label{eq:fixed-point}
 	\underline{m}(z)=\left( -z+\frac{N}{M}\int
 	\frac{\lambda+\sigma^2}{1+(\lambda+\sigma^2)
 	\underline{m}(z)}d\, F^{\GG_t}(\lambda)\right)^{-1}\ .
 	\end{equation}
 	Moreover, $\underline{m}(z)$ is analytical on $\CC^+=\left\{z\in\CC,
 	\Im(z)> 0\right\}$ where $\Im(z)$ stands for the imaginary part of
 	$z\in \CC$. Using \eqref{eq:fixed-point}, one can prove that
 	$m_{\GG_t}(z)$ satisfies:
 	\begin{equation}\label{eq:unobservable}
 	m_{\GG_t}\left(-\frac{1}{\underline{m}(z)}-\sigma^2\right)
 	=\underline{m}(z)(1-\frac{M}{N})-\frac{M}{N}\,z\, \underline{m}^2(z)\ .
 	\end{equation}
 	The link between the unobservable Stieltjes transform $
 	m_{\GG_t}$ and the deterministic equivalent
 	$\underline{m}(z)$ being established, it remains to express
 	$N^{-1}\log\det(\I_N+\sigma^{-2} \GG_t)$ in terms
 	of $ m_{\GG_t}$, which follows easily by differentiation:
 	$$
 	\frac{\partial}{\partial \sigma^2}\frac{1}{N}\log\det\left
 	(\I_N+\frac{\GG_t}{\sigma^2}\right)
 	=\frac{1}{N}\tr\left(\GG_t+\sigma^2\I_N\right)^{-1}-\frac{1}{\sigma^2}\ .
 	$$
 	Hence:
 	\begin{eqnarray}
 	\frac{1}{N}\log\det\left(\I_N+\frac{\GG_t}{\sigma^2}\right)
 	&=&\int_{\sigma^2}^{+\infty}\frac{1}{v}-\frac{1}{N}\tr\left(\GG_t+v\I_N\right)^{-1}dv\nonumber\ ,\\
 	&=&\int_{0}^{\frac{1}{\sigma^2}} \frac{1}{v}-\frac{1}{v^2}m_{\GG_t}\left(-\frac{1}{v}\right)dv \ .\label{eq:capacity_b}
 	\end{eqnarray}
 	We shall now perform a change of variables within the integral in
 	order to substitute $\underline{m}$ for $m_{\GG_t}$ with
 	the help of \eqref{eq:unobservable}. Since the support of $\underline{F}$ is on $\left[0,+\infty\right[$, the Stieltjes transform $\underline{m}$ is continuous and increasing on $\left]-\infty,0\right[$. It establishes then a bijection from $\left]-\infty,0\right[$ to $\left]\lim_{x\to -\infty}\underline{m}(x),\lim_{x\to 0}\underline{m}(x)\right[$. Obviously,  $\lim_{x\to -\infty}\underline{m}(x)=0$ whereas $\lim_{x\to 0^{-}}\underline{m}(x)=-\infty$ since $0$ is an eigenvalue of $\frac{1}{M}{\bf Y}_t{\bf Y}_t^{\herm}$ with multiplicity at least equal to $M-N$. 
	
 We have thus, 
 	$$
 	u\mapsto\left(\frac{1}{\underline{m}(u)}+\sigma^2\right)^{-1}
 	$$
 	establishes a bijection from $\RR_{-}^*$ to $(0,1/\sigma^2)$. Considering the change of variable
 	$\frac{1}{t}=\frac{1}{\underline{m}(u)}+\sigma^2$, \eqref{eq:capacity_b} writes:
 	\begin{eqnarray*}
 	\lefteqn{ \frac{1}{N}\log\det\left(\I_N+\frac{\GG_t}{\sigma^2}\right)}\\
 	&=&\int_{-\infty}^0\left[\frac{1}{\underline{m}(u)}+\sigma^2-\left(\frac{1}{\underline{m}(u)}+\sigma^2\right)^2
 	m_{\GG_t}\left(-\frac{1}{\underline{m}(u)}-\sigma^2\right)\right]\frac{\underline{m}'(u)}
 	{(1+\sigma^2\underline{m}(u))^2}\,du\\
 	&=&\int_{-\infty}^0\left[ \frac{\underline{m}'(u)}{\underline{m}(u)(1+\sigma^2\underline{m}(u))}-
 	\left(1-\frac{M}{N}\right)\frac{\underline{m}'(u)}{\underline{m}}+\frac{M}{N}\,u\,\underline{m}'(u)\right] du\\
 	&=&\int_{-\infty}^{0} \left[ \frac{M}{N}\frac{\underline{m}'(u)}{\underline{m}(u)}-\frac{\sigma^2\underline{m}'(u)}{1+\sigma^2\underline{m}(u)}+\frac{M}{N}u\underline{m}'(u)\right]du.
 	\end{eqnarray*}
 	We shall now compute this integral, denoted by $I$ in the
 	sequel. Write $I= \lim_{\substack{x\to{-\infty}\\ y\to 0}} I_{x,y}$ where
 	$$
 	I_{x,y}=\int_{x}^{y} \left[
 	\frac{M}{N}\frac{\underline{m}'(u)}{\underline{m}(u)}-\frac{\sigma^2\underline{m}'(u)}{1+\sigma^2\underline{m}(u)}+\frac{M}{N}u\underline{m}'(u)\right]\,du\ .
 	$$
 	Straightforward computations yield:
 	\begin{equation}
 	I_{x,y}=\log\left|\frac{(\underline{m}(y))^{\frac{M}{N}}}{1+\sigma^2\underline{m}(y)}\right|-\log\left|\frac{(\underline{m}(x))^{\frac{M}{N}}}{1+\sigma^2\underline{m}(x)}\right|+\frac{M}{N}y\underline{m}(y)-\frac{M}{N}x\underline{m}(x)-\int_{x}^{y}\frac{M}{N}\underline{m}(u)du\ .
 	\label{eq:Ialphabeta}
 	\end{equation}
 	As our objective is to compute the limit of $I_{x,y}$ as $x\to
 	-\infty$ and $y\to 0$, we need to obtain equivalents for
 	$\underline{m}$ at $0$ and $-\infty$. A direct application of the
 	dominated convergence theorem yields:
 	\begin{equation*}
 	\underline{m}(x)\underset{x\to-\infty}{\sim}-\frac{1}{x}\ .
 	\end{equation*}
 	Recall that $\underline{F}$ is the probability distribution associated
 	to $\underline{m}$. Then,
 	$\underline{F}(\{0\})=M^{-1}(M-N)$. Although this property is not easy
 	to write down properly, it is quite intuitive if one sees
 	$\underline{F}$ a.s. close to $F^{\Y_t^{\herm}\Y_t}$ (the empirical
 	distribution of the eigenvalues of $\Y_t^{\herm}\Y_t$) which clearly
 	satisfies $F^{\Y_t^{\herm}\Y_t}(\{0\})=M^{-1}(M-N)$ by Assumption
 	\ref{ass:asymptotics}: This assumption implies in fact that zero is an
 	eigenvalue of $\Y_t^{\herm}\Y_t$ of order $M-N$. Hence,
 	\begin{equation*}
 	\underline{m}(y)\underset{y\to 0}{\sim} -\frac{M-N}{My}\ .
 	\end{equation*}
 	Using these relations, we can derive equivalents for the first four terms in the right-hand side of \eqref{eq:Ialphabeta}. In particular, we obtain:
 	\begin{eqnarray}
 	\log\left|\frac{(\underline{m}(y))^{\frac{M}{N}}}{1+\sigma^2\underline{m}(y)}\right|&\underset{y\to 0}{\sim}&
 	\left(\frac{M}{N}-1\right)\log\left(\frac{M-N}{M}\right)-\log(\sigma^2)+\left(1-\frac{M}{N}\right)\log|y|
 	\label{eq:1}\ , \\
 	-\log\left|\frac{(\underline{m}(x))^{\frac{M}{N}}}{1+\sigma^2\underline{m}(x)}\right|&\underset{x\to -\infty}{\sim}&
 	\frac{M}{N}\log|x|
 	\label{eq:2}\ ,\\
 	\frac{M}{N}\,y\, \underline{m}(y)&\underset{y\to 0}{\sim}& -\left(\frac{M}{N}-1\right)
 	\label{eq:3}\ ,\\
 	-\frac{M}{N}\, x\, \underline{m}(x)&\underset{x\to -\infty}{\sim}& \frac{M}{N}
 	\label{eq:4}\ .
 	\end{eqnarray}
 	Let us now handle the last term in \eqref{eq:Ialphabeta}. Clearly, we have:
	$$
	F^{\frac{1}{M}\Y_t^{\herm}\Y_t}(dx)=\frac{N}{M}	F^{\frac{1}{M}\Y_t\Y_t^{\herm}}+\frac{(M-N)}{M}\delta_0(dx)
	$$
	which implies that 
	$$
	m_{\frac{1}{M}\Y_t^{\herm}\Y_t}(z)=\frac{M}{N}	m_{\frac{1}{M}\Y_t\Y_t^{\herm}}(z)+\frac{(M-N)}{N}\frac{1}{z}
	$$
	The above relations can be also transferred to the limit Stieltjes transforms $m$ and $\underline{m}$ and their associated probability distribution functions $F$ and $\underline{F}$. 
	Actually, we have:

 	$$
 	\underline{F}(dx) = \frac{(M-N)}M \delta_{0}(dx) + \frac{N}M F(dx)\ .
 	$$
	and also:
 	$$
 	m(z) = \frac MN \underline{m}(z) + \frac {(M-N)}N \, \frac 1z\ .
 	$$
 	Note in particular that $m_{\Y_t \Y_t^{\herm}} - m\rightarrow 0$, hence that $F$ is a deterministic approximation of $F^{\frac{1}{M}\Y_t \Y_t^{\herm}}$, the empirical distribution of the eigenvalues of $\frac{1}{M}{\Y_t \Y_t^{\herm}}$. Now,
 	\begin{eqnarray}
 	\int_{x}^{y}\frac{M}{N}\underline{m}(u)du&=&\int_{x}^{y} \int \frac{d\,F(t)}{t-u}du-\frac{M-N}{Nu}du\ ,\nonumber\\
 	&=&\int (-\log|t-y|+\log|t-x|)d\,F(t)+\frac{M-N}{N}\left(\log|x|-\log|y|\right)\label{eq:integral}\ .
 	\end{eqnarray}
 	Using the dominated convergence theorem, one can prove that the r.h.s. of \eqref{eq:integral} is equivalent to:
 	\begin{equation}
 	\int_{x}^{y}\frac{M}{N}\underline{m}(u)du\underset{{\substack{x\to{-\infty}\\ y\to 0}}}{\sim} -\int\log(t) dF(t)+\frac{M}{N}\log|x|-\frac{M-N}{N}\log|y|\ .
 	\label{eq:5}
 	\end{equation}
 	Plugging \eqref{eq:1}, \eqref{eq:2}, \eqref{eq:3}, \eqref{eq:4} and \eqref{eq:5} into \eqref{eq:Ialphabeta} yields:
 	$$
 	\lim_{\substack{x\to{-\infty} \\ y\to 0}} I_{x,y}=\frac{M-N}{N}\log\left(\frac{M-N}{M}\right)-\log\sigma^2+\int\log(t) dF(t)+1.
 	$$
 	Since the spectrum of $\frac{1}{M}\Y_t\Y_t^{\herm}$ is almost surely
 	eventually bounded away from zero and upper-bounded \cite{SIL98}, uniformly along
 	$N$, we have:
 	$$
 	\frac{1}{N}\sum_{i=1}^N \log(\lambda_i)-\int \log(t) dF(t)\xrightarrow[M,N,n\to+\infty]{\mathrm{a.s.}}{0}
 	$$
 	where $(\lambda_i, 1\le i\le N)$ are the eigenvalues of
 	$\frac{1}{M}\Y_t\Y_t^{\herm}$. A consistent estimator of
 	$\frac{1}{N}\log\det(\G_t\G_t^{\herm})$ is thus given by:
 	\begin{eqnarray*}
 	I_1&=&\frac{M-N}{N}\log\left(\frac{M-N}{M}\right)+1+\frac{1}{N}\sum_{i=1}^N\log(\lambda_i)\\
 	&=&\frac{M-N}{N}\log\left(\frac{M-N}{M}\right)+1+\frac{1}{N}\log\det\left(\frac{1}{M}\Y_t\Y_t^{\herm}\right)\ ,
 	\end{eqnarray*}
 	which concludes the proof.

 	\section{Proof of lemma \ref{lemma:deter}}
 	\label{app:lemma_deter}
 	Define for $\rho\geq 0$:
 	\begin{eqnarray*}
 	\Q_t(\rho,y)&=&\left(\rho \I_N+y\H_t\H_t^{\herm}+\frac{1}{M}\Y_t\Y_t^{\herm}\right)^{-1}\ ,\\
 	g_t(\rho,y)&=&\frac{1}{N}\log\det\left(\rho \I_N+y\H_t\H_t^{\herm}+\frac{1}{M}\Y_t\Y_t^{\herm}\right)\ .
 	\end{eqnarray*}

Recall that $\Y_t=\G_t \W_t$. Denote
 	by $\G_t=\U_t  \D_t^{\frac{1}{2}} \V_t^{\herm}$ the
 	singular value decomposition of $\G_t$, $\D_t$ being the diagonal matrix of
 	eigenvalues of $\G_t\G_t^{\herm}$; in particular,
 	$\D_t$'s entries are nonnegative and bounded away from zero. Let
 	$\widetilde{\W}_t=\V^{\herm}_t \W_t$. Since
 	the entries of $\W_t$ are i.i.d. and Gaussian, $\widetilde{\W}_t$ has
 	the same entry distribution as $\Z_t$. Hence
 	$g_t(\rho,y)$ becomes:
 	\begin{align*}
 	g_t(\rho,y)&=\frac{1}{N}\log\det\left(\rho\I_N+y\H_t\H_t^{\herm}+\frac{1}{M}\U_t\D_t^{\frac{1}{2}}\widetilde{\W}_t
 	\widetilde{\W}^{\herm}_t\D_t^{\frac{1}{2}}\U_t^{\herm}\right)\ ,\\
 	&=\frac{1}{N}\log\det\left(\rho\I_N+y\U_t^{\herm}\H_t\H_t^{\herm}\U_t
 	+\frac{1}{M}\D_t^{\frac{1}{2}}\widetilde{\W}_t \widetilde{\W}_t^{\herm}\D_t^{\frac{1}{2}}\right).
 	\end{align*}
 	Obviously, we have $-\frac{1}{N}\log\det(\Q_t(y))=g_t(0,y)$ and
 	$\frac{1}{M}\tr\Q_t(y)=\frac{1}{M}\tr\Q_t(0,y)$. Deterministic
 	equivalents for $g_t(\rho,y)$ and $\Q_t(\rho,y)$ have been derived in
 	\cite{HAC07} and are recalled in the lemma below.
 	\begin{lemma}[cf. \cite{HAC07}] Let $\rho>0$.
 	\begin{enumerate}
 	\item Let $y>0$. The following
 	functional equation:
 	$$
 	\kappa_t(\rho,y)=\frac{1}{M}\tr\left(\G_t \G_t^\herm\left(\rho\I_N+y\H_t\H_t^{\herm}+\frac{\G_t \G_t^\herm}{1+\kappa_t(\rho,y)}\right)^{-1}\right)
 	$$
 	admits a unique positive solution $\kappa_t(\rho,y)$.
 	\item Define
 	$$
 	\T_t(\rho,y)=\left(\rho \I_N+y\H_t\H_t^{\herm}+\frac{\G_t \G_t^\herm}{1+\kappa_t(\rho,y)}\right)^{-1}.
 	$$
 	Then, for any sequence of deterministic matrices ${\bf S}_N\in \CC^{N\times N}$
 	with uniformly bounded spectral norm:
 	$$
 	\frac{1}{M}\tr{\bf S}_N\Q_t(\rho,y)-\frac{1}{M}\tr{\bf S}_N
 	\T_t(\rho,y)\xrightarrow[M,N,n\rightarrow \infty]{\mathrm{a.s.}} 0\ .
 	$$
 	In particular, setting ${\bf S}_N=\G_t \G_t^\herm$, we get:
 	$$
 	\frac{1}{M}\tr \G_t \G_t^\herm \Q_t(\rho,y)-\kappa_t(\rho,y)\xrightarrow[M,N,n\rightarrow \infty]{\mathrm{a.s.}} 0\ .
 	$$
 	\item Let
 	$$
 	V_t(\rho,y)=\log\det\left(\rho \I_N+y\H_t\H_t^{\herm}+\frac{\G_t \G_t^\herm}{1+\kappa_t(\rho,y)}\right)+{M}\log(1+\kappa_t(\rho,y))-{M}\frac{\kappa_t(\rho,y)}{1+\kappa_t(\rho,y)}\ ,
 	$$
 	then
 	$$
 	g(\rho,y)-\frac{1}{N}V_t(\rho,y)\xrightarrow[M,N,n\rightarrow \infty]{\mathrm{a.s.}} 0\ .
 	$$
 	\end{enumerate}
 	\label{lemma:equivalent_deterministe}
 	\end{lemma}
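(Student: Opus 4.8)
\emph{Proof plan.} This lemma is precisely the deterministic-equivalent theorem of \cite{HAC07} applied to the model at hand, so the most economical argument is to verify the hypotheses of that reference and invoke it; below I indicate why the reduction works and recall the three ingredients. After the unitary rotation by $\U_t$ performed above, $\Q_t(\rho,y)=\bigl(\A_t+\tfrac1M\D_t^{1/2}\widetilde\W_t\widetilde\W_t^\herm\D_t^{1/2}\bigr)^{-1}$, where $\A_t\eqdef\rho\I_N+y\,\U_t^\herm\H_t\H_t^\herm\U_t$ is deterministic, Hermitian and satisfies $\A_t\succeq\rho\I_N\succ0$ with $\|\A_t\|$ uniformly bounded (Assumption \ref{ass:channel-matrix-H}), $\D_t$ is diagonal with entries in $[\sigma^2,\sup_{t,N,n}\|\G_t\|^2]$ (Assumption \ref{ass:channel-matrix-G}), and $\widetilde\W_t$ is $N\times M$ with i.i.d. standard complex Gaussian entries. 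Since in addition $N/M$ and $N/n$ are bounded away from $0$ and $\infty$ (Assumption \ref{ass:asymptotics}), all hypotheses of \cite{HAC07} hold.

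\emph{Existence and uniqueness of $\kappa_t(\rho,y)$.} Put $\phi(\kappa)=\tfrac1M\tr\bigl(\G_t\G_t^\herm\bigl(\A_t+\tfrac{\G_t\G_t^\herm}{1+\kappa}\bigr)^{-1}\bigr)$ for $\kappa\ge0$; it is smooth and non-decreasing, $\phi(0)>0$, and $\G_t\G_t^\herm\bigl(\A_t+\tfrac{\G_t\G_t^\herm}{1+\kappa}\bigr)^{-1}\preceq(1+\kappa)\I_N$ gives $\phi(\kappa)\le(1+\kappa)N/M$, hence $\phi(\kappa)-\kappa\to-\infty$ because $N/M<1$; the intermediate value theorem then yields a positive solution. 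For uniqueness, subtracting the equations for two solutions and using the resolvent identity $R_1-R_2=\tfrac{\kappa_1-\kappa_2}{(1+\kappa_1)(1+\kappa_2)}R_1\G_t\G_t^\herm R_2$ (with $R_i=\bigl(\A_t+\tfrac{\G_t\G_t^\herm}{1+\kappa_i}\bigr)^{-1}$) reduces matters to showing that the coefficient $\tfrac1{(1+\kappa_1)(1+\kappa_2)}\tfrac1M\tr(\G_t\G_t^\herm R_1\G_t\G_t^\herm R_2)$ is $<1$; since $\A_t\succeq\rho\I_N$ each factor $\tfrac1{1+\kappa_i}(\G_t\G_t^\herm)^{1/2}R_i(\G_t\G_t^\herm)^{1/2}\preceq\tfrac{\|\G_t\G_t^\herm\|}{\|\G_t\G_t^\herm\|+\rho}\I_N$, and Cauchy--Schwarz gives the bound $\bigl(\tfrac{\|\G_t\G_t^\herm\|}{\|\G_t\G_t^\herm\|+\rho}\bigr)^2\tfrac NM<1$. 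The same computation, letting $\kappa_2\to\kappa_1=\kappa_t(\rho,y)$, shows $\phi'(\kappa_t)\in[0,1)$ with $1-\phi'(\kappa_t)$ bounded away from $0$ --- the quantitative stability used below.

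\emph{Deterministic equivalent of $\tfrac1M\tr{\bf S}_N\Q_t$.} First, the Gaussian Poincar\'e--Nash (Lipschitz-concentration) inequality, applied to $\widetilde\W_t\mapsto\tfrac1M\tr{\bf S}_N\Q_t(\rho,y)$ whose gradient is $O(1/M)$, gives $\mathrm{Var}\bigl(\tfrac1M\tr{\bf S}_N\Q_t\bigr)=O(M^{-2})$, hence $\tfrac1M\tr{\bf S}_N\Q_t-\EE\,\tfrac1M\tr{\bf S}_N\Q_t\to0$ almost surely by Borel--Cantelli. Second, write $\bar\kappa=\EE\,\tfrac1M\tr\G_t\G_t^\herm\Q_t$; applying the integration-by-parts formula for Gaussian matrices to $\EE\Q_t$ together with the resolvent identity, and discarding fluctuation terms via the variance bound, one obtains $\EE\Q_t=\bigl(\A_t+\tfrac{\G_t\G_t^\herm}{1+\bar\kappa}\bigr)^{-1}+o(1)$ in the sense of traces against uniformly bounded test matrices; taking ${\bf S}_N=\G_t\G_t^\herm$ shows $\phi(\bar\kappa)=\bar\kappa+o(1)$, so by the stability of the fixed point $\bar\kappa-\kappa_t(\rho,y)\to0$, and substituting back yields $\tfrac1M\tr{\bf S}_N\EE\Q_t-\tfrac1M\tr{\bf S}_N\T_t(\rho,y)\to0$ for any uniformly bounded $({\bf S}_N)$. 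Combining the two steps gives item 2, and the choice ${\bf S}_N=\G_t\G_t^\herm$ gives the stated corollary.

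\emph{Deterministic equivalent of $g_t$, and the main obstacle.} Treat $\rho$ as a free parameter. Then $\partial_\rho g_t(\rho,y)=\tfrac1N\tr\Q_t(\rho,y)$, while a direct differentiation of $V_t$ shows that the $\partial_\rho\kappa_t$ contributions cancel because $\tfrac1M\tr(\G_t\G_t^\herm\T_t)=\kappa_t$, so $\partial_\rho\tfrac1N V_t(\rho,y)=\tfrac1N\tr\T_t(\rho,y)$ as well. Integrating the item-2 estimate from $\rho$ to $+\infty$, using that both $g_t(\rho,y)-\log\rho$ and $\tfrac1N V_t(\rho,y)-\log\rho$ tend to $0$ as $\rho\to\infty$ (where $\kappa_t(\rho,y)\to0$), and that the integrand $\tfrac1N\tr(\Q_t-\T_t)$ decays like $o(\rho^{-2})$ so the tail is controlled, gives $g_t(\rho,y)-\tfrac1N V_t(\rho,y)\to0$. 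The real work is concentrated in the mean estimate of the previous paragraph: the fixed-point analysis and the integration in $\rho$ are routine, but controlling $\EE\Q_t$ through the Gaussian integration-by-parts identity with its $O(1/N)$ remainders, and converting the resulting approximate fixed-point relation into convergence via the quantitative stability of $\kappa_t(\rho,y)$, is exactly the technical core --- carried out in full generality in \cite{HAC07}, which is why the cleanest presentation is to check that the present model meets its hypotheses and cite it.
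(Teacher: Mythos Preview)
Your proposal is correct and matches the paper's treatment: the paper does not prove this lemma at all but simply cites \cite{HAC07}, and you correctly identify this and verify that the hypotheses of that reference are met under Assumptions \ref{ass:channel}--\ref{ass:channel-matrix-H}. Your additional sketch of the three ingredients (fixed-point existence/uniqueness via monotonicity and a contraction bound, the Poincar\'e--Nash plus Gaussian integration-by-parts route to the trace equivalent, and the integration-in-$\rho$ argument for the log-det) goes beyond what the paper provides and is a faithful summary of the method of \cite{HAC07}; nothing further is needed here.
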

 	
 	The general idea of the proof of Lemma \ref{lemma:deter} is to
 	transfer these deterministic equivalents to the case $\rho\searrow 0$; we will proceed by taking advantage
 	from the fact that all the diagonal elements of
 	$\D_t$ are positive and uniformly bounded away from zero.
 	
 	We first prove the existence and uniqueness of
 	$\kappa_t(y)$. Consider the function $f$ defined on $\left[0,\infty\right[$ by:
 	$$
 	f:x\mapsto x-\frac{1}{M}\tr\D_t\left(y\U_t^{\herm}\H_t\H_t^{\herm}\U_t+\frac{\D_t}{1+x}\right)^{-1}\ .
 	$$
 	An easy computation yields the derivative of $f$ with respect to $x$:
 	$$
 	f'(x)=1-\frac{1}{M}\tr\D_t\left(y\U_t^{\herm}\H_t\H_t^{\herm}\U_t+\frac{\D_t}{1+x}\right)^{-1}\frac{\D_t}{(1+x)^2}\left(y\U_t^{\herm}\H_t\H_t^{\herm}\U_t+\frac{\D_t}{1+x}\right)^{-1}
 	$$
 	which is obviously always positive. Function $f$ is thus always increasing and thus establishes a
 	bijection from $\left[0,\infty\right[$ to $\left[f(0),+\infty\right[$. Since $f(0)$ is negative, we conclude
 	that $f$ has a single zero. This proves the existence and uniqueness of $\kappa_t(y)$.
 	It remains to
 	extend the asymptotic convergence results to the case $\rho = 0$.
 	
 	In the sequel, we only prove item 2) for ${\bf S}_N=\G_t\G_t^{\herm}$ as it
 	captures the key arguments of the proof; the extension to general
 	sequences $({\bf S}_N)$ will then be straightforward. Write
 	$\frac{1}{M}\tr \G_t \G_t^\herm\Q_t(y)-\kappa_t(y)$ as:
 	\begin{align*}
 	\frac{1}{M}\tr \G_t \G_t^\herm\Q_t(y)-\kappa_t(y)
 	&=\frac{1}{M}\tr \G_t \G_t^\herm\Q_t(y)-\frac{1}{M}\tr \G_t \G_t^\herm \Q_t(\epsilon,y)\\
 	&\quad +\frac{1}{M}\tr \G_t \G_t^\herm \Q_t(\epsilon,y)-\kappa_t(\epsilon,y)\\
 	&\quad \quad +\kappa_t(\epsilon,y)-\kappa_t(y)\ ,
 	\end{align*}
 	where $\epsilon >0$. We now handle sequentially each of the
 	differences of the r.h.s. of the previous decomposition. We
 	first prove that there exists a fixed constant $K>0$ (which only
 	depends on $\limsup NM^{-1}$) such that for every $\epsilon >0$, there
 	exists $N_1$ (which depends on the realization and hence is random)
 	such that for every $N\geq N_1$, we have:
 	\begin{equation}
 	\left|\frac{1}{M}\tr \G_t \G_t^\herm \Q_t(y)-\frac{1}{M}\tr \G_t \G_t^\herm \Q_t(\epsilon,y)\right|\leq \frac{\epsilon}{K}\ .
 	\label{prop:QH}
 	\end{equation}
	To prove this, we rely on the resolvent identity ${\bf
          B}^{-1}-{\bf C}^{-1}=-{\bf B}^{-1}\left({\bf B}-{\bf
            C}\right){\bf C}^{-1}$ which holds for any square
        invertible matrices ${\bf B}$ and ${\bf C}$.  Then, we have:
 	\begin{align*}
 \left|	\frac{1}{M}\tr \G_t \G_t^\herm \Q_t(y)-\frac{1}{M}\tr \G_t \G_t^\herm \Q_t(\epsilon,y)\right|&=\left|\frac{\epsilon}{M}\tr \G_t \G_t^\herm \Q_t(0,y)\Q_t(\epsilon,y)\right|\ \\
 	&\leq \frac{\epsilon}{M}\tr \G_t \G_t^\herm  \left\|
 	\left(\frac{1}{M}\D_t^{\frac{1}{2}}\widetilde{\W}\widetilde{\W}^{\herm}\D_t^{\frac{1}{2}}\right)^{-1}
 	\right\|^2\ .
 	\end{align*}
 	Recall that $\widetilde{\W}_t$ is an $N\times M$ matrix and that by
 	Assumption \ref{ass:asymptotics}, $\limsup_{M,N} N M^{-1} <1$.
 	Therefore the spectrum of $\widetilde{\W}_t\widetilde{\W}_t^{\herm}$
 	is almost surely eventually bounded away from zero\footnote{Recall
 	that if $\lim NM^{-1}=c<1$, then the smallest eigenvalue
 	$\lambda_{\min}(\widetilde{\W}_t\widetilde{\W}_t^{\herm})$ converges
 	to $(1-\sqrt{c})^2>0$; it remains to argue on subsequences to conclude
 	in the case where $\limsup_{M,N} N M^{-1} <1$ .}. In
 	particular, there exists a constant $K$ such that eventually, we have
 	$\left\|\left(\frac{1}{M}\D_t^{\frac{1}{2}}\widetilde{\W}\widetilde{\W}^{\herm}\D_t^{\frac{1}{2}}\right)^{-1}
 	\right\|^2\leq K^{-1}$, hence:
 	$$
 	\exists N_1,\ \forall N\ge N_1,\quad \left|\frac{1}{M}\tr\G_t \G_t^\herm \Q_t(y)-\frac{1}{M}\tr\G_t \G_t^\herm \Q_t(\epsilon,y)\right|\leq
 	\frac{\epsilon}{K}\ .
 	$$
 	
 	The second step consists in proving that for some constant $\widetilde{K}$ (depending on $\limsup NM^{-1}$)
 	there exists $N_2$ (depending on the realization) such that for all $N\geq N_2$:
 	\begin{equation}
 	\left|\kappa_t(\epsilon,y)-\kappa_t(y)\right|\leq \widetilde{K}\epsilon
 	\label{eq:kappa_p}\ .
 	\end{equation}
 	The proof of \eqref{eq:kappa} relies on the following identity:
 	\begin{equation}
 	\kappa_t(y)-\kappa_t(\epsilon,y)=\epsilon \alpha_N+\beta_N\left(\kappa_t(y)-\kappa_t(\epsilon,y)\right)\ ,
 	\label{eq:kappa_initial}
 	\end{equation}
 	where
 	\begin{align*}
 	\alpha_N&=\frac{1}{M}\tr\G_t \G_t^\herm\T_t(\epsilon,y)\T_t(y)  \ ,\\
 	\beta_N&=\frac{1}{M}\tr\left(\frac{\G_t \G_t^\herm\T_t(\epsilon,y)\G_t \G_t^\herm\T_t(y)}{(1+\kappa_t(y))(1+\kappa_t(\epsilon,y))}\right)\ .
 	\end{align*}
 	It is clear that $\beta_N <\liminf \frac{N}{M}$. Thus, by Assumption \ref{ass:asymptotics}, $\beta_N <1$. Also, one can prove that there exists $\widetilde{K}>0$
 	such that $\limsup\alpha_N < \widetilde{K}$. In fact, $\alpha_N$ satisfies:
 	\begin{equation}
 	\alpha_N \leq \frac{N}{M} \left\|\G_t \G_t^\herm\right\|
 	\left\|\left( \G_t \G_t^\herm\right) ^{-1}\right\|^2(1+\kappa_t(y))(1+\kappa_t(\epsilon,y))\ .
 	\label{eq:kappa}
 	\end{equation}
 	One can prove that $\kappa_t(y)$ and $\kappa_t(\epsilon,y)$ are smaller
 	than $\frac{N}{M(1-N/M)}$. In fact, $\kappa_t(y)$ can be written as:
 	\begin{eqnarray*}
 	\kappa_t(y)&=&\frac{N(1+\kappa_t(y))}{M}-\frac{(1+\kappa_t(y))}{M}\tr\left(y\H_t\H_t^{\herm}\left(y\H_t\H_t^{\herm}+\frac{\G_t \G_t^\herm}{1+\kappa_t(y)}\right)^{-1}\right)\ ,\\
 	&=&\frac{N}{M(1-\frac{N}{M})}-\frac{(1+\kappa_t(y))}{M(1-\frac{N}{M})}\tr\left(y\H_t\H_t^{\herm}\left(y\H_t\H_t^{\herm}+\frac{\G_t \G_t^\herm}{1+\kappa_t(y)}\right)^{-1}\right)\ ,\\
 	&\leq& \frac{N}{M(1-\frac{N}{M})}\ .
 	\end{eqnarray*}
 	Similar arguments hold for $\kappa_t(\epsilon,y)$, thus proving that $\lim\sup\alpha_N\leq \widetilde{K}$.
 	From \eqref{eq:kappa_initial}, we conclude that there exists $N_3$ such that for all $N \geq N_3$,
 	$$|\kappa_t(\epsilon,y)-\kappa_t(y)|\leq \widetilde{K}\epsilon\ .
 	$$
 	We are now in position to prove the almost sure convergence of $\frac{1}{M}\tr\G_t \G_t^\herm\Q_t(y)-\kappa_t(y)$.
 	Consider the constants $K$ and $\widetilde{K}$ as defined previously and let $\epsilon >0$.
 	According to \eqref{prop:QH}, there exists $N_1$ such that:
 	$$
 	\forall N\ge N_1\ ,\quad \left|\frac{1}{M}\tr \G_t \G_t^\herm \Q_t(y)-\frac{1}{M}\tr
 	\G_t \G_t^\herm\Q_t(\epsilon,y)\right|\leq
 	\frac{\epsilon}{K}\ .
 	$$
 	Using the almost sure convergence result of
 	$\frac{1}{M}\tr\G_t \G_t^\herm \Q_t(\epsilon,y)$ stated in
 	Lemma \ref{lemma:equivalent_deterministe}, there exists
 	$N_2$ such that:
 	$$
 	\forall N\ge N_2\ ,\quad \left|\frac{1}{M}\tr\G_t \G_t^\herm \Q_t(\epsilon,y)-\kappa_t(\epsilon,y)\right|\leq \epsilon\ .
 	$$
 	Finally from \eqref{eq:kappa_p}, there exists $N_3$ such that for all $N\geq N_3$:
 	$$
 	|\kappa_t(\epsilon,y)-\kappa_t(y)|\leq \widetilde{K}\epsilon\ .
 	$$
 	Combining all these results, we have, for $N\geq \max(N_1,N_2,N_3)$:
 	$$
 	\left|\frac{1}{M}\tr\G_t \G_t^\herm\Q(y)-\kappa_t(y)\right|\leq \epsilon\left(\frac{1}{K}+1+\widetilde{K}\right)\ ,
 	$$
 	hence proving that:
 	$$
 	\frac{1}{M}\tr\G_t \G_t^\herm\Q_t(y)-\kappa_t(y)\xrightarrow[M,N,n\rightarrow\infty]{\textrm{a.s.}} 0\ ,
 	$$
 	which is the desired result.
 	
 	\section{Proof of Theorem \ref{th:gestimator}}
 	\label{app:g_estimator}
 	As previously mentionned, the proof of Theorem \ref{th:gestimator} relies on the existence of a consistent estimate for
 	$$
 	\II_{t,1}=\frac{1}{N}\log\det(\G_t\G_t^{\herm}+\H_t\H_t^{\herm})\ .
 	$$
 	Denote by $f(y)$ the parametrized quantity:
 	$$
 	f(y)=\frac{1}{N}\log\det(\Y_t\Y_t^{\herm}+y\H_t\H_t^{\herm})\ .
 	$$
 	Then by Lemma \ref{lemma:deter}-3), we obtain:
 	\begin{equation}\label{eq:equiv-f}
 	-f(y)+\frac{1}{N}\log\det\left(\frac{\G_t\G_t^{\herm}}{1+\kappa_t(y)}+y\H_t\H_t^{\herm}\right)
 	+\frac{M}{N}\log(1+\kappa_t(y))-\frac{M}{N}\frac{\kappa_t(y)}{1+\kappa_t(y)} \xrightarrow[M,N,n\rightarrow\infty]{\textrm{a.s.}} 0\ .
 	\end{equation}
 	Obviously, if $y$ is replaced by $y_{N,t}$, a solution of:
 	\begin{equation}\label{eq:y-def-bis}
 	y_{N,t}=\frac{1}{1+\kappa_t(y_{N,t})}\ ,
 	\end{equation}
 	then the term $C_{t,1}$ appears in \eqref{eq:equiv-f}. The existence and uniqueness of $y_{N,t}$ immediately follows from the fact that the function $g$ defined as:
 	$$
 	g:x\mapsto (1+x)\frac{1}{M}\tr(\G_t\G_t^{\herm})(\H_t\H_t^{\herm}+\G_t\G_t^{\herm})^{-1}
 	$$
 	is a contraction. Moreover, straightforward computations yield:
 	\begin{equation}\label{eq:y-def-ter}
 	y_{N,t}=1-\frac{1}{M}\tr\G_t\G_t^{\herm}(\H_t\H_t^{\herm}+\G_t\G_t^{\herm})^{-1}\ .
 	\end{equation}
 	Unfortunately, $y_{N,t}$ depends on the unobservable matrix $\G_t$. One needs therefore to provide a consistent estimate $\yest$ of $y_{N,t}$. In order to proceed, we shall study the asymptotics of $\kappa_t(y)$. By Lemma \ref{lemma:deter}-2), we have:
 	\begin{equation}
 	\frac{y}{M}\tr \H_t\H_t^{\herm}\Q_t(y)-\frac{y}{M}\tr \H_t\H_t^{\herm}\T_t(y)\xrightarrow[M,N,n\rightarrow\infty]{\textrm a.s.} 0\ .
 	\label{eq:deter1}
 	\end{equation}
 	On the other hand, we have:
 	\begin{align}
 	\frac{y}{M}\tr \H_t\H_t^{\herm}\T_t(y)&=\frac{1}{M}\tr y\, \H_t\H_t^{\herm} \left(y\H_t\H_t^{\herm}+\frac{\G_t\G_t^{\herm}}{1+\kappa_t(y)}\right)^{-1}\ ,\nonumber\\
 	&=\frac{N}{M}-\frac{1}{M(\kappa_t(y)+1)}\tr\left(\G_t\G_t^{\herm}\left(y\H_t\H_t^{\herm}+\frac{\G_t\G_t^{\herm}}{1+\kappa_t(y)}\right)^{-1}\right)\ ,\nonumber\\
 	&=\frac{N}{M}-\frac{\kappa_t(y)}{1+\kappa_t(y)}\ ,\nonumber\\
 	&=\frac{N}{M}-1+\frac{1}{1+\kappa_t(y)}\ .\label{eq:deter2}
 	\end{align}
 	Substituting \eqref{eq:deter2} into \eqref{eq:deter1}, we obtain:
 	\begin{equation}
 	\frac{1}{M}\tr y\H_t\H_t^{\herm}\Q_t(y)-\frac{N}{M}+1-\frac{1}{\kappa_t(y)+1}\xrightarrow[M,N,n\rightarrow\infty]{\textrm{a.s.}} 0\ .
 	\label{eq:deter3}
 	\end{equation}
 	Intuitively, a consistent estimate $\yest$ of $y_{N,t}$ should satisfy $\yest=M^{-1} \yest\, \tr \H_t\H_t^{\herm}\Q_t(\yest)-\frac{N}{M}+1$. This intuition is confirmed by the following lemma:
 	\begin{lemma}\label{lemma:approx-point-fixe}
 	There exists a unique positive solution $\yest$ to the equation:
 	$$
 	\frac{\yest}{M}\tr \H_t\H_t^{\herm}\Q_t(\yest)-\frac{N}{M}+1-\yest=0\ .
 	$$
 	Moreover, the following convergence holds true:
 	$$
 	\yest-y_{N,t}\xrightarrow[M,N,n\rightarrow\infty]{\textrm a.s.} 0\ ,
 	$$
 	where $y_{N,t}$ is defined by \eqref{eq:y-def-bis} (see also \eqref{eq:y-def-ter}).
 	\end{lemma}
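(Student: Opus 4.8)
The plan is to recast the relation defining $\yest$ as a scalar fixed‑point equation $y=h_t(y)$ on a fixed compact interval, to settle existence and uniqueness there by a monotonicity argument, and then to obtain $\yest-y_{N,t}\asto 0$ by comparing $h_t$ with its deterministic equivalent, which is furnished by Lemma~\ref{lemma:deter}. Using $y\,\H_t\H_t^{\herm}\Q_t(y)=\I_N-\tfrac1M\Y_t\Y_t^{\herm}\Q_t(y)$, the equation of the lemma is equivalent to $y=h_t(y)$ with
\[
h_t(y)\ \eqdef\ 1-\frac1M\tr\Bigl(\frac{\Y_t\Y_t^{\herm}}{M}\Bigr)\Bigl(y\,\H_t\H_t^{\herm}+\frac{\Y_t\Y_t^{\herm}}{M}\Bigr)^{-1}.
\]

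Since $(\tfrac1M\Y_t\Y_t^{\herm})(y\H_t\H_t^{\herm}+\tfrac1M\Y_t\Y_t^{\herm})^{-1}$ has all its eigenvalues in $[0,1]$, one has $h_t(y)\in I\eqdef[\tfrac{M-N}M,1]$ for every $y>0$, so $h_t$ maps $I$ into itself, and as $F_t(y)\eqdef y-h_t(y)$ satisfies $F_t(\tfrac{M-N}M)\le 0\le F_t(1)$ it has at least one zero in $I$ by the intermediate value theorem. For uniqueness I would differentiate: with $R=\Q_t(y)$ and $S=R^{1/2}(\tfrac1M\Y_t\Y_t^{\herm})R^{1/2}$, whose eigenvalues $s_i$ lie in $[0,1]$, one gets $h_t'(y)=\tfrac1{yM}\sum_i s_i(1-s_i)\ge 0$, while $\sum_i s_i=M(1-h_t(y))$ and $\sum_i(1-s_i)=N-M(1-h_t(y))$; hence at any zero of $F_t$ (where $1-h_t(y)=1-y$) one obtains $h_t'(y)\le\tfrac1y(\tfrac NM-(1-y))<1$, so $F_t'(y)=1-h_t'(y)>0$. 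A zero at which $F_t$ is increasing is an isolated crossing from $-$ to $+$, so two distinct zeros would force an intervening crossing from $+$ to $-$, which is impossible; this gives the announced unique positive solution $\yest$, which lies in $I$.

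For the convergence, recall that \eqref{eq:deter3}, a consequence of Lemma~\ref{lemma:deter}-2), reads $h_t(y)-(1+\kappa_t(y))^{-1}\asto 0$ for each fixed $y>0$, i.e.\ $h_t$ is pointwise close to the deterministic map $\bar h_t(y)\eqdef(1+\kappa_t(y))^{-1}$, whose fixed point is $y_{N,t}$ by \eqref{eq:y-def-bis} (closed form \eqref{eq:y-def-ter}). Both $h_t'$ and $\bar h_t'$ are bounded on $I$ uniformly in $N$ (for $h_t$, from the formula above and $y\ge\tfrac{M-N}M$; for $\bar h_t$, by differentiating \eqref{eq:kappa_t_t} and using $\kappa_t(y)\le\tfrac N{M-N}$ from Appendix~\ref{app:lemma_deter}), so $\{h_t\}$ and $\{\bar h_t\}$ are equicontinuous on $I$, and pointwise almost sure convergence along a countable dense subset of $I$ upgrades to $\sup_{y\in I}|h_t(y)-\bar h_t(y)|\asto 0$. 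The deterministic ingredient to add is that $y_{N,t}$ is a \emph{uniformly well‑separated} zero of $\bar F_t\eqdef y-\bar h_t$: differentiating \eqref{eq:kappa_t_t} at $y_{N,t}$, where $\kappa_t(y_{N,t})=(1-y_{N,t})/y_{N,t}$, and using $\tfrac1M\tr\G_t\G_t^{\herm}(\H_t\H_t^{\herm}+\G_t\G_t^{\herm})^{-1}\le N/M<1$, one gets $\bar h_t'(y_{N,t})\le N/M$, hence $\bar F_t'(y_{N,t})\ge 1-N/M$, which is bounded away from $0$ by Assumption~\ref{ass:asymptotics}; in fact $\bar h_t'(y)\le N/M$ for all $y\ge y_{N,t}$ in $I$, so that, together with a uniform regularity bound to the left of $y_{N,t}$ and the uniqueness of the zero of $\bar F_t$, one produces for each $\eta>0$ a constant $c(\eta)>0$ with $|\bar F_t(y)|\ge c(\eta)$ whenever $y\in I$ and $|y-y_{N,t}|\ge\eta$. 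Since $F_t(\yest)=0$ we then have $|\bar F_t(\yest)|=|\bar h_t(\yest)-h_t(\yest)|\le\sup_{y\in I}|h_t-\bar h_t|\asto 0$, forcing $|\yest-y_{N,t}|<\eta$ eventually; as $\eta>0$ is arbitrary, $\yest-y_{N,t}\asto 0$.

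The main obstacle should be the uniform‑in‑$N$ separation of the deterministic fixed point used in the last step: it requires differentiating the implicit equation~\eqref{eq:kappa_t_t} and controlling $\kappa_t'$ (and, for the behaviour to the left of $y_{N,t}$, $\kappa_t''$) uniformly on $I$, together with pinning down the inequality $\tfrac1M\tr\G_t\G_t^{\herm}(\H_t\H_t^{\herm}+\G_t\G_t^{\herm})^{-1}<1$ that makes $\bar h_t'(y_{N,t})<1$. The other point requiring care is the promotion of the pointwise almost sure convergence supplied by Lemma~\ref{lemma:deter} to the uniform statement $\sup_{y\in I}|h_t-\bar h_t|\asto 0$, which rests on the equicontinuity estimates above.
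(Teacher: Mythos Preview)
Your proof is correct, but it takes a different route from the paper's in both parts.

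For existence and uniqueness, the paper works directly with $h(y)=\frac{y}{M}\tr\H_t\H_t^{\herm}\Q_t(y)-\frac{N}{M}+1-y$, notes $h(0)>0$ and $h(y)\to-\infty$, and then observes that $h$ is \emph{concave} (the map $y\mapsto\tr y\H_t\H_t^{\herm}\Q_t(y)$ is a sum of concave scalar functions $y\mu_i/(y\mu_i+\lambda_i)$), which rules out multiple zeros in one line. Your argument via $F_t'>0$ at every zero is also valid and reaches the same conclusion.

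For the convergence, the paper avoids the uniform-convergence and well-separation machinery altogether. It applies the deterministic equivalent \emph{only at the single deterministic point} $y_{N,t}$ (absorbing $\sqrt{y_{N,t}}$ into $\H_t$ to handle the $N$-dependence), obtaining $F_t(y_{N,t})\asto 0$, and then writes, via the resolvent identity,
\[
F_t(y_{N,t})-F_t(\yest)=(y_{N,t}-\yest)\Bigl[1-\tfrac{1}{M}\tr\H_t\H_t^{\herm}\Q_t(y_{N,t})+\tfrac{\yest}{M}\tr\H_t\H_t^{\herm}\Q_t(\yest)\H_t\H_t^{\herm}\Q_t(y_{N,t})\Bigr].
\]
The bracket is the sum of a nonnegative term and a term whose deterministic equivalent is $(1-N/M)/y_{N,t}\ge 1-N/M>0$, so $\yest-y_{N,t}\asto 0$ follows immediately.

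What each approach buys: the paper's argument is shorter and needs only the pointwise convergence at one value of $y$ plus one algebraic factorization. Your approach is conceptually more robust (it would transfer to situations where no such explicit factorization is available), but the price is precisely what you flag at the end: you must control $\kappa_t'$ and $\kappa_t''$ uniformly on $I$ to get equicontinuity of $\bar h_t$ and the uniform-in-$N$ separation of $y_{N,t}$. In particular, your assertion that $\bar h_t'(y)\le N/M$ for \emph{all} $y\ge y_{N,t}$ in $I$ is stated without proof; it does hold at $y=y_{N,t}$ (as you compute), but extending it along the whole interval is not immediate and would require an additional argument. None of this is a genuine gap, but the paper's direct factorization is the more economical route here.
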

 	\begin{proof}
The existence of $\yest$ follows from the fact that: $h:y\mapsto \frac{y}{M}\tr\H_t\H_t^{\herm}\Q_t(y)-\frac{N}{M}+1-y$ is a continuous  function on $\left[0,+\infty\right[$, satisfying $h(0)>0$ and $\lim_{y\to+\infty} h(y)=-\infty$. Assume that $h$ admits more than one zero. It is clear that the zeros of $h$ are isolated. Since $h(0) >0$, there exists then $y_1$ and $y_2$ such that $h(y_1)=h(y_2)=0$ and $h(y)<0$ for every $y\in\left[y_1,y_2\right]$. However, this could not happen since $h$ is concave, and as such $h(\frac{y_1+y_2}{2})\geq 1/2h(y_1)+1/2h(y_2)=0$. Function $h$ admits then a unique zero $\yest$.

 	Using \eqref{eq:deter3}, we get that:
 	$$
 	\frac{ y_{N,t}}{M}\tr\H_t\H_t^{\herm}\Q_t(y_{N,t})-\frac{N}{M}+1-y_{N,t}\xrightarrow[M,N,n\rightarrow\infty]{\textrm{a.s.}} 0\ .
 	$$
 	Beware that in \eqref{eq:deter3}, the convergence holds true for a fixed $y$ while $y_{N,t}$ depends upon $N$. A way to circumvent this issue is to merge $y_{N,t}$ into $\H_t$
 	and to consider the slightly different model based on $\widetilde{\H}_t= \sqrt{y_{N,t}} \H_t$.
 	
 	Therefore, the mere definition of $\yest$ and the previous convergence yield:
 	$$
	k(y_{N,t},\yest)\xrightarrow[M,N,n\rightarrow\infty]{\textrm{a.s.}} 0,
 	$$
where
$$
	k(y_{N,t},\yest)=\frac{\yest}{M}\tr(\H_t\H_t^{\herm}\Q_t(\yest))-\yest+y_{N,t}-\frac{y_{N,t}}{M}\tr(\H_t\H_t^{\herm}\Q_t(y_{N,t})).
$$
 Expanding  $k(y_{N,t},\yest)$, we get:
	\begin{align*}
	k(y_{N,t},\yest)&=\frac{\yest}{M}\tr(\H_t\H_t^{\herm}\Q_t(\yest))-\frac{\yest}{M}\tr(\H_t\H_t^{\herm}\Q_t(y_{N,t}))+\frac{\yest}{M}\tr(\H_t\H_t^{\herm}\Q_t(y_{N,t}))+(y_{N,t}-\hat{y}_{N,t})\\
	&-\frac{y_{N,t}}{M}\tr(\H_t\H_t^{\herm}\Q_t(y_{N,t}))\\
	&=\yest(y_{N,t}-\yest)\frac{1}{M}\tr(\H_t\H_t^{\herm}\Q_t(\yest)\H_t\H_t^{\herm}\Q_t(y_{N,t}))+(y_{N,t}-\yest)\\
&\quad +(\yest-y_{N,t})\frac{1}{M}\tr(\H_t\H_t^{\herm}\Q_t(y_{N,t}))\\
	&=\left(y_{N,t}-\yest\right)\left(1-\frac{1}{M}\tr\H_t\H_t^{\herm}\Q_t(y_{N,t})+\frac{\yest}{M}\tr(\H_t\H_t^{\herm}\Q_t(\yest)\H_t\H_t^{\herm}\Q_t(y_{N,t}))\right)
	\end{align*}
	To conclude that $y_{N,t}-\yest$ converges almost surely zero, one needs to estabslish that a deterministic asymptotic approximate of  $$\left(1-\frac{1}{M}\tr\H_t\H_t^{\herm}\Q_t(y_{N,t})+\frac{\yest}{M}\tr(\H_t\H_t^{\herm}\Q_t(\yest)\H_t\H_t^{\herm}\Q_t(y_{N,t}))\right)$$ could not be equal to zero. This is true, since from the definition of $y_{N,t}$, we can easily check that  $1-\frac{1}{M}\tr\H_t\H_t^{\herm}\Q_t(y_{N,t})$ can be approximated asymptotically by $1-\frac{1}{My_{N,t}}\tr\H_t\H_t^{\herm}\left(\H_t\H_t^{\herm}+\G_t\G_t^{\herm}\right)^{-1}$, where we recall that $y_{N,t}$ writes as:
	$$
	y_{N,t}=1-\frac{1}{M}\G_t\G_t^{\herm}\left(\H_t\H_t^{\herm}+\G_t\G_t^{\herm}\right)^{-1}=1-\frac{N}{M}+\frac{1}{M}\tr\H_t\H_t^{\herm}\left(\G_t\G_t^{\herm}+\H_t\H_t^{\herm}\right)^{-1}.
	$$
	The deterministic equivalent of $1-\frac{1}{M}\tr\H_t\H_t^{\herm}\Q_t(y_{N,t})$ is thus given by:
	$$
	\frac{1-\frac{N}{M}}{1-\frac{N}{M}+\frac{1}{M}\tr\H_t\H_t^{\herm}\left(\G_t \G_t^\herm+\H_t\H_t^{\herm}\right)^{-1}}
	$$
	which is obviously uniformly lower-bounded by $1$. 
 	\end{proof}
 	With the help of Lemma \ref{lemma:approx-point-fixe}, the following convergence can be easily verified:
 	\begin{eqnarray*}
 	\kappa(\yest)-\kappa(y_{N,t})&\xrightarrow[M,N,n\rightarrow\infty]{\textrm a.s.}&0\ .
 	\end{eqnarray*}
	Let $h:y\mapsto -\frac{1}{N}\log\det(\Q_t(y))$, where $h'(y)=\frac{1}{N}\tr(\H_t\H_t^{\herm}\Q_t(y))\leq \frac{\|\H_t\|^2}{\lambda_{\rm min}(\frac{1}{M}\Y_t\Y_t^{\herm})}$. As the minimum eigenvalue of $\frac{1}{M}(\Y_t\Y_t^{\herm})$ is almost surely bounded away zero, function $h$ is Lipschitz. Therefore, the following convergence hold true:
	$$
\frac{1}{N}\log\det(\Q_t(y_{N,t}))-\frac{1}{N}\log\det(\Q_t(\yest))\xrightarrow[M,N,n\rightarrow\infty]{\textrm{a.s.}} 0\ ,
$$
 	We then get:
	$$
 	-f(\yest)+\frac{1}{N}\log\det(\G_t\G_t^{\herm}+\H_t\H_t^{\herm})-\frac{M-N}{N}\log(\yest)-\frac{M}{N}(1-\yest)\xrightarrow[M,N,n\rightarrow\infty]{\textrm a.s.}0\ ,
 	$$
 	which in turn implies that:
 	$$
 	\II_{t,1}-\frac{1}{N}\log\det(\yest\H_t\H_t^{\herm}+\Y_t\Y_t^{\herm})-\frac{M-N}{N}\log(\yest)-\frac{M}{N}(1-\yest)\xrightarrow[M,N,n\rightarrow\infty]{\textrm a.s.}0\ .
 	$$
 	Using this estimate of $\II_{t,1}$ together with the estimate of $\II_{t,2}$ as provided in Lemma \ref{lemma:stieltjes} immediately yields a consistent estimate for $\II_t(\sigma^2)=\II_{t,1}-\II_{t,2}$, and the theorem is proved.

 	\section{Proof of theorem \ref{th:clt_main}}
 	\label{app:clt_main}
 	The proof of Theorem \ref{th:clt_main} relies on the tools
        used in \cite{HAC06}, adapted for dealing with Gaussian
        random variables. Recall that $\trad(y)$ is given by:
 	$$
 	\trad(y)=\frac{1}{NT}\sum_{t=1}^T \log\det\left(y\H_t\H_t^{\herm}+\frac{1}{M}\Y_t\Y_t^{\herm}\right)-\log\det\left(\frac{1}{M}\Y_t\Y_t^{\herm}\right),
 	$$
 	where $\Y_t=\G_t \W_t$. Similarly, as in Appendix
        \ref{app:lemma_deter} and Appendix \ref{app:stieltjes}, we can
        prove that $\Y_t=\U_t\D_t^{\frac{1}{2}}\widetilde{\W}_t$ where
        $\widetilde{\W}_t$ is a $N\times M$ standard Gaussian matrix,
        and $\D_t$ is the $N\times N$ diagonal matrix containing the
        eigenvalues of $\G_t\G_t^{\herm}$. Then,
        $\trad(y)$ becomes:
 	\begin{align*}
 	\trad(y)&=\frac{1}{NT}\sum_{t=1}^T \log\det(y\H_t\H_t^{\herm}+\frac{1}{M}\U_t\D_t^{\frac{1}{2}}\widetilde{\W}_t\widetilde{\W}_t^{\herm}\D_t^{\frac{1}{2}}\U_t^{\herm})-\log\det(\frac{1}{M}\D_t^{\frac{1}{2}}\widetilde{\W}_t\widetilde{\W}_t^{\herm}\D_t^{\frac{1}{2}}),\\
 	&=\frac{1}{NT}\sum_{t=1}^T \log\det(y\D_t^{-\frac{1}{2}}\U_t^{\herm}\H_t\H_t^{\herm}\U_t\D_t^{-\frac{1}{2}}+\frac{1}{M}\widetilde{\W}_t\widetilde{\W}_t^{\herm})-\log\det(\frac{1}{M}\widetilde{\W}_t\widetilde{\W}_t^{\herm}),\\
 	&=\frac{1}{NT}\sum_{t=1}^T\log\det\left(y\D_t^{-\frac{1}{2}}\U_t^{\herm}\H_t\H_t^{\herm}\U_t\D_t^{-\frac{1}{2}}\left(\frac{1}{M}\widetilde{\W}_t\widetilde{\W}_t^{\herm}\right)^{-1}+\I_N\right).
 	\end{align*}
 	Denote by $\D_t^{-\frac{1}{2}}\U_t^{\herm}\H_t\H_t^{\herm}\U_t\D_t^{-\frac{1}{2}}=\widetilde{\U}_t\boldsymbol{\Lambda}_t\widetilde{\U}_t^{\herm}$ the eigenvalue decomposition of $\D_t^{-\frac{1}{2}}\U_t^{\herm}\H_t\H_t^{\herm}\U_t\D_t^{-\frac{1}{2}}$. Since $p_t$ is the rank of $\H_t\H_t^{\herm}$, matrix $\boldsymbol{\Lambda}_t$ has exactly $p_t$ non zero entries which we denote by $(\lambda_{i,t},1\leq i\leq p_t)$. We get that $\trad$ can be written as:
 	$$
 	\trad(y)=\frac{1}{NT}\sum_{t=1}^T\log\det\left(y\boldsymbol{\Lambda}_t\left(\frac{1}{M}\widetilde{\W}_t\widetilde{\W}_t^{\herm}\right)^{-1}+\I_N\right).
 	$$
 	Let $\boldsymbol{\Lambda}_{p_t,t}={\rm diag}\left(\lambda_{1,t},\ldots,\lambda_{p_t,t}\right)$. Obviously, only the diagonal elements of $\boldsymbol{\Lambda}_{p_t,t}$ contribute in the expression of $\trad(y)$.	Then, using \cite[Theorem 3.2.11]{MUI82}, we can prove that $\trad(y)$ can be written as:
 	$$
 	\trad(y)=\frac{1}{NT}\sum_{t=1}^T\log\det\left(y\boldsymbol{\Lambda}_{p_t,t}\left(\frac{1}{M}\widetilde{\W}_{p_t,t}\widetilde{\W}_{p_t,t}^{\herm}\right)^{-1}+\I_{p_t}\right),
 	$$
 	where $\widetilde{\W}_{p_t,t}$ is a $p_t\times M-N+p_t$ standard Gaussian matrix.
 	Let $\M=\frac{(M-N+p_t)}{My}\boldsymbol{\Lambda}_{p_t,t}^{-1}$, we finally get:
 	\begin{align*}
 	\trad(y)&=\frac{1}{NT}\sum_{t=1}^T\log\det\left(\frac{1}{M-N+p_t}\M^{\frac{1}{2}}\widetilde{\W}_{p_t,t}\widetilde{\W}_{p_t,t}^{\herm}\M^{\frac{1}{2}}+\I_{p_t}\right)-\log\det\left(\M\right)-\log\det\left(\frac{1}{M-N+p_t}\widetilde{\W}_{p_t,t}\widetilde{\W}_{p_t,t}^{\herm}\right)\\
 	&\triangleq\sum_{t=1}^T \hat{\II}_{{\rm ES,t}}(y).
 	\end{align*}
 	
 	Let $s=M-N+p_t$. By Assumptions \ref{ass:asymptotics} and \ref{ass:channel-matrix-Hb}-1), we have:
 	$$
 	0<\liminf_{M,N,n_0\to\infty}\frac{s}{p_t}\leq \limsup_{M,N,n_0\to\infty} \frac{s}{p_t} <+\infty.
 	$$
 	Moreover, Assumption \ref{ass:channel-matrix-H} and \ref{ass:channel-matrix-Hb}-2) implies that  matrix $\M$ satisfies:
 	$$
 	\sup_{N,M,n} \|\M\| <\infty \hspace{0.1cm} \textnormal{and} \hspace{0.1cm} \inf_{N,M,n} \frac{1}{s}\tr\M >0.
 	$$
 	We retrieve then the same model as in \cite{HAC06}, with the
        slight difference that $\hat{\II}_{{\rm ES,t}}(y)$ has an
        extra random term
        $\log\det\left(\frac{1}{M}\widetilde{\W}_{p_t,t}\widetilde{\W}_{p_t,t}^{\herm}\right)$. As
        we will see next, this has no impact on the applicability of
        the method and one can get the desired result by following the
        same lines of \cite{HAC06}. For ease of notation, we will drop
        next the subscripts $p_t$ and $t$ from all matrices. In
        particular, we consider to prove a CLT for the functional
        $\log\det(\frac{\rho}{s}\M^{\frac{1}{2}}\widetilde{\W}\widetilde{\W}^{\herm}\M^{\frac{1}{2}}+\I)-\log\det(\frac{1}{s}\M^{\frac{1}{2}}\widetilde{\W}\widetilde{\W}^{\herm}\M^{\frac{1}{2}})$
        where $\rho>0$, $\widetilde{\bf W}$ is an $p_t\times s$
        standard Gaussian matrix and $\M$ is an $p_t\times p_t$
        deterministic matrix.
	
 	The expression of the variance for this CLT will depend on some deterministic quantities which we recall hereafter.
 	\subsection{Notations}
 	Let $\Z=\M^{\frac{1}{2}}\widetilde{\W}$ and define the resolvent matrix ${\bf S}(z)$ by:
 	$$
 	{\bf S}(z)=\left(\frac{z}{s}\M^{\frac{1}{2}}\widetilde{\W}\widetilde{\W}^{\herm}\M^{\frac{1}{2}}+\I\right)^{-1}=\left(\frac{z}{s}\Z\Z^{\herm}+\I\right)^{-1},
 	$$
 	Let also $I_s(z)$ be given by:
 	$$
 	I_s(z)=\log\det\left(\frac{z}{s}\M^{\frac{1}{2}}\widetilde{\W}\widetilde{\W}^{\herm}\M^{\frac{1}{2}}+\I\right)=-\log\det{\bf S}(z).
 	$$
 	We introduce the following intermediate quantities:
 	$$
 	\beta(z)=\frac{1}{s}\tr\M{\bf S}, \hspace{0.2cm} \alpha(z)=\frac{1}{s}\tr{\M}\EE{\bf S}, \hspace{0.2cm} \textnormal{and} \hspace{0.2cm} \stackrel{o}{\beta}=\beta-\alpha.
 	$$
 	Matrix $\widetilde{\R}(z)$ is an $s\times s$ diagonal matrix defined by:
 	$$
 	\widetilde{\R}(z)=\tilde{r}\I_s,
 	$$
 	where $\tilde{r}=\frac{1}{1+z\alpha(z)}$. We also define $\R(z)$ the $p_t\times p_t$ diagonal matrix given by:
 	$$
 	\R(z)=\left(\I+z\tilde{r}\M\right)^{-1}={\textrm{ diag}}(r_i,1\leq i\leq p_t),
 	$$
	where $r_i=\frac{1}{1+z\tilde{r}m_i}$.
 	We also define $\delta(z)$ as the unique positive solution of the following equation:
 	$$
 	\delta(z)=\frac{1}{s}\tr\M\left(\I+\frac{z}{1+z\delta(z)}\M\right)^{-1},
 	$$
 	where the existence and uniqueness of $\delta(z)$ have already been proven in \cite{HAC06}.
 	Let $\boldsymbol{\Xi}$ and $\widetilde{\boldsymbol{\Xi}}$ be the $p_t\times p_t$ and $s\times s$ diagonal matrices defined by:
 	$$
 	\boldsymbol{\Xi}=\left(\I+\frac{z}{1+z\delta(z)}\M \right)^{-1} \hspace{0.1cm}\textnormal{and}\hspace{0.2cm} \widetilde{\boldsymbol{\Xi}}=\frac{1}{1+z\delta(z)}\I_s
 	$$
 	Define also $\gamma$, $\tilde{\delta}(z)$ and $\tilde{\gamma}$ as $\gamma=\frac{1}{s}\tr\M^2\boldsymbol{\Xi}^2$, $\tilde{\delta}(z)=\frac{1}{1+z\delta(z)}$ and $\widetilde{\gamma}=\frac{1}{(1+z\delta(z))^2}$.
 	\subsection{Mathematical tools}\label{app:gaussian-calculus}
 	We recall here the mathematical tools that will be used to establish theorem \ref{th:clt_main}. All these results can be found in \cite{HAC06}.
 	\begin{enumerate}
 	\item Differentiation formulas:
 	\begin{align*}
 	\frac{\partial S_{p,q}}{\partial {Z}_{i,j}}&=-\frac{z}{s}\left[\Z^{\herm}{\bf S}\right]_{j,q} S_{p,i},\\
 	\frac{\partial S_{p,q}}{\partial Z_{i,j}^*}&=-\frac{z}{s}\left[{\bf S}\Z\right]_{p,j}S_{i,q},\\
 	\frac{\partial I_s(z)}{\partial Z_{i,j}^*}&=\frac{z}{s}\left[{\bf S}{\bf Z}\right]_{i,j},\\
 	\frac{\partial{\log\det(\frac{1}{s}{\bf Z}{\bf Z}^{\herm})}}{\partial Z_{i,j}^*}&=\left[\left({\bf Z}\Z^{\herm}\right)^{-1}{\bf Z}\right]_{i,j}.
 	\end{align*}
 	\item Integration by parts formula for Gaussian functionals: Denote by $\Phi$ be a $\mathcal{C}^1$ complex function polynomially bounded with its derivatives, then
 	$$
 	\EE\left[Z_{i,j}\Phi(\Z)\right]=m_i\EE\left[\frac{\partial \Phi(\Z)}{\partial Z_{i,j}^*}\right].
 	$$
 	where $m_i$ is the $i$-th diagonal element of $\M$.
 	\item Poincar\'{e}-Nash inequality: The variance of $\Phi(\Z)$ can be upper-bounded as:
 	$$
 	{\rm var}(\Phi(\Z))\leq \sum_{i=1}^{p_t}\sum_{j=1}^s m_i\EE\left[\left|\frac{\partial \Phi(\Z)}{\partial Z_{i,j}}\right|^2+\left|\frac{\partial \Phi(\Z)}{\partial Z_{i,j}^*}\right|^2\right].
 	$$
 	\item Deterministic approximations of some functionals:
 	\begin{proposition}
          Let $\A$ and $\B$ be two sequences of respectively
          $p_t\times p_t$ and $s\times s$ diagonal deterministic
          matrices with uniformly bounded spectral norm. Let
          Assumptions \ref{ass:channel}-\ref{ass:channel-matrix-H}
          hold true. Then, the following holds true:
 	$$
 	\frac{1}{s}\tr\A\R=\frac{1}{s}\tr\A\boldsymbol{\Xi}+\mathcal{O}\left(s^{-2}\right), \quad\tilde{r}=\tilde{\delta}+\mathcal{O}\left(s^{-2}\right) \quad \textnormal{and} \quad \EE\frac{1}{s}\tr\A\H=\frac{1}{s}\tr\A\boldsymbol{\Xi}+\mathcal{O}\left(s^{-2}\right).
 	$$
 	\label{prop:approximation}
 	\end{proposition}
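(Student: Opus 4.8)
The plan is to follow the Gaussian-calculus scheme of \cite{HAC06}, the only additional bookkeeping being the deterministic weights $\A$ (resp.\ $\B$) against which the resolvent is tested and the tracking of $\mathcal{O}(s^{-2})$ remainders. Throughout, since $z>0$ and $\M$ is nonnegative with uniformly bounded spectral norm, one has the deterministic bound $\|{\bf S}(z)\|\le 1$, so that $\alpha(z)$, $\tilde{r}(z)$ and $\|\R(z)\|$ are bounded uniformly in $s$; the matching bounds for $\delta$, $\tilde{\delta}$ and $\|\boldsymbol{\Xi}\|$ follow from the existence/uniqueness analysis of the defining fixed-point equation recalled from \cite{HAC06}. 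A first application of the Poincar\'e--Nash inequality, together with the differentiation formulas, to $\Phi(\Z)=\frac{1}{s}\tr\A{\bf S}$ yields ${\rm var}\big(\frac{1}{s}\tr\A{\bf S}\big)=\mathcal{O}(s^{-2})$; in particular ${\rm var}(\beta)=\mathcal{O}(s^{-2})$, so that $\EE|\stackrel{o}{\beta}|^{2}=\mathcal{O}(s^{-2})$.

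Next I would derive a master equation for $\EE{\bf S}$. Starting from ${\bf S}=\I-\frac{z}{s}{\bf S}\Z\Z^{\herm}$ and applying the Gaussian integration-by-parts formula coordinatewise, one obtains after regrouping
$$
\EE\frac{1}{s}\tr\A{\bf S}\;=\;\frac{1}{s}\tr\A\big(\I+z\tilde{r}\M\big)^{-1}+\varepsilon_{s}\;=\;\frac{1}{s}\tr\A\R+\varepsilon_{s}\ ,
$$
where $\varepsilon_{s}$ is a finite sum of terms, each of which is either quadratic in the centered quantity $\stackrel{o}{\beta}$ or a normalized trace of a product of bounded matrices carrying one extra factor $1/s$; invoking the variance estimate above and Cauchy--Schwarz gives $\varepsilon_{s}=\mathcal{O}(s^{-2})$. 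Note that $\R$ is already deterministic here because $\tilde{r}$ is built from $\alpha=\EE\beta$. The analogous identity for $\widetilde\R$, $\widetilde{\boldsymbol{\Xi}}$ with the weight $\B$ is obtained in exactly the same way.

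Taking $\A=\M$ in this identity gives $\alpha=\frac{1}{s}\tr\M\big(\I+\frac{z}{1+z\alpha}\M\big)^{-1}+\mathcal{O}(s^{-2})$, i.e.\ the equation defining $\delta$ perturbed by an $\mathcal{O}(s^{-2})$ term. Since the map $x\mapsto\frac{1}{s}\tr\M(\I+\frac{z}{1+zx}\M)^{-1}$ is a strict contraction on the relevant compact interval (the argument giving uniqueness of $\delta$ in \cite{HAC06}), a standard perturbation-of-fixed-point bound yields $|\alpha-\delta|=\mathcal{O}(s^{-2})$, hence $|\tilde{r}-\tilde{\delta}|=\big|\frac{1}{1+z\alpha}-\frac{1}{1+z\delta}\big|=\mathcal{O}(s^{-2})$ --- the second claimed estimate. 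The resolvent identity then gives $\R-\boldsymbol{\Xi}=-z(\tilde{r}-\tilde{\delta})\,\R\M\boldsymbol{\Xi}$, so $\|\R-\boldsymbol{\Xi}\|=\mathcal{O}(s^{-2})$ and therefore $\frac{1}{s}\tr\A\R=\frac{1}{s}\tr\A\boldsymbol{\Xi}+\mathcal{O}(s^{-2})$ --- the first claimed estimate; combining this with the master equation gives $\EE\frac{1}{s}\tr\A{\bf S}=\frac{1}{s}\tr\A\boldsymbol{\Xi}+\mathcal{O}(s^{-2})$ --- the third.

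\textbf{Main obstacle.} The delicate point is the master equation: pushing the remainder $\varepsilon_{s}$ down to $\mathcal{O}(s^{-2})$ instead of the naive $\mathcal{O}(s^{-1})$. This requires organizing the terms produced by integration by parts so that every surviving contribution is either genuinely quadratic in $\stackrel{o}{\beta}$ (killed by the variance bound) or an extra-$1/s$ normalized trace of bounded matrices that itself fluctuates by $\mathcal{O}(s^{-1})$. The weights $\A$, $\B$ cause no trouble since they enter linearly and with bounded norm, and the extra $\log\det$ term present in the functional plays no role here as it is not a function of ${\bf S}$. All these estimates are available in \cite{HAC06}; for the present proposition it suffices to check that inserting $\A$ (and $\B$) changes nothing essential.
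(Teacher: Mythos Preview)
Your outline is correct and matches the paper's treatment: the paper does not prove this proposition in-text but simply states that ``All these results can be found in \cite{HAC06}'', and what you have sketched is precisely the Gaussian-calculus derivation from that reference (Poincar\'e--Nash for the $\mathcal{O}(s^{-2})$ variance bound on $\beta$, integration by parts to get the approximate resolvent equation $\EE\frac{1}{s}\tr\A{\bf S}=\frac{1}{s}\tr\A\R+\mathcal{O}(s^{-2})$, then fixed-point perturbation to pass from $\R$ to $\boldsymbol{\Xi}$). Your identification of the only delicate step --- organizing the integration-by-parts remainders so that each is either quadratic in $\stackrel{o}{\beta}$ or carries an extra $1/s$ --- is exactly the point emphasized in \cite{HAC06}, and the insertion of the bounded deterministic weight $\A$ indeed changes nothing. (Note also that the ``$\H$'' in the third displayed estimate of the proposition is a typo for ${\bf S}$, as you correctly inferred.)
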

 	\begin{proposition}
 	Let $\A$, $\B$ and ${\bf C}$ be three sequences of $p_t\times p_t$, $s\times s$ and $p_t\times p_t$ diagonal deterministic matrices whose spectral norm are uniformly bounded in $p_t$. Consider the following:
 	$$
 	\Phi(\Z)=\frac{1}{s}\tr\left(\A{\bf S}\frac{\Z\B\Z^{\herm}}{s}\right), \hspace{0.2cm}\Psi(\Z)=\frac{1}{s}\tr\left(\A{\bf S}\M{\bf S}\frac{\Z\B\Z^{\herm}}{s}\right),
 	$$
 	and assume that \ref{ass:channel}-\ref{ass:channel-matrix-H} hold true.
 	Then,
 	\begin{enumerate}
 	\item The following estimations hold true:
 	${\rm var}(\Phi(\Z)), {\rm var}(\Psi(\Z)), {\rm var}(\beta)$ are $\mathcal{O}\left(s^{-2}\right)$.
 	\item The following approximations hold true:
 	\begin{align}
 	\EE\left[\Phi(\Z)\right]&=\tilde{\delta}\frac{1}{s}\tr\A\M\boldsymbol{\Xi}+\mathcal{O}\left(s^{-2}\right),\\
 	\EE\left[\Psi(\Z)\right]&=\frac{1}{1-z^2\gamma\tilde{\gamma}}\left(\tilde{\delta}\frac{1}{s}\tr\B\frac{1}{s}\tr(\A\M^2\boldsymbol{\Xi}^2)-{z\gamma\tilde{\gamma}}\frac{1}{s}\tr\B\frac{1}{s}\tr\A\M\boldsymbol{\Xi}\right)+\mathcal{O}\left(s^{-2}\right),\\
 	\EE\left[\frac{1}{s}\tr\M{\bf S}\M{\bf S}\right]&=\frac{\gamma}{1-z^2\gamma\tilde{\gamma}}+\mathcal{O}\left(s^{-2}\right).
 	\end{align}
 	\end{enumerate}
 	\label{prop:estimation}
 	\end{proposition}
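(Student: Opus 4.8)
These estimates are instances of the standard Gaussian-calculus bootstrap of \cite{HAC06}; the plan is to obtain item~1 from the Poincar\'e--Nash inequality and item~2 from Gaussian integration by parts, feeding in Proposition~\ref{prop:approximation} to replace resolvent traces by their deterministic surrogates and feeding in item~1 to discard every covariance that appears. All the work is carried out on a domain of $z$ on which $\|{\bf S}(z)\|$ is uniformly bounded (e.g. $z>0$, where $\I\preceq \frac zs\Z\Z^{\herm}+\I$ gives $\|{\bf S}\|\le 1$), together with the uniform boundedness of $\|\M\|$, $\|\R\|$, $\|\boldsymbol{\Xi}\|$ and the lower bound on $\frac1s\tr\M$ coming from the hypotheses on $\M$.

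For item~1, I would apply Poincar\'e--Nash to each of $\Phi(\Z)$, $\Psi(\Z)$ and $\beta(z)$. Using the differentiation formulas recalled in Section~\ref{app:gaussian-calculus}, the derivatives $\partial_{Z_{i,j}}$ and $\partial_{Z_{i,j}^{*}}$ of these functionals are finite sums of products of entries of ${\bf S}$, of $\Z$, and of the bounded diagonal matrices $\A,\B,\M$, each product carrying an explicit factor $s^{-1}$ or $s^{-2}$ from the normalizations. Summing $m_i$ times the squared moduli of these derivatives over $i,j$, the entry products collapse into normalized traces of the type $\frac1s\tr(\cdots {\bf S}{\bf S}^{\herm}\cdots)=\mathcal{O}(1)$, and the leftover normalization leaves $\mathcal{O}(s^{-2})$. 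The bound on $\mathrm{var}(\beta)$ is exactly the one of \cite{HAC06}; those for $\Phi$ and $\Psi$ follow the same pattern, the extra weight $\B$ being harmless since $\|\B\|$ is bounded. These bounds are then kept in reserve: combined with Cauchy--Schwarz they make every covariance encountered below $\mathcal{O}(s^{-2})$, so that a product of expectations may be substituted for an expectation of a product.

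For item~2 I would perform one integration by parts per $\Z$-factor. Writing $\EE[\Phi(\Z)]=\frac1{s^{2}}\sum_{p,q,j} A_{pp}B_{jj}\,\EE[S_{pq}Z_{qj}Z^{*}_{pj}]$ and integrating by parts in $Z_{qj}$ produces a Kronecker term proportional to $(\frac1s\tr\B)\,\EE[\frac1s\tr\A\M{\bf S}]$ and a resolvent-derivative term that reassembles exactly into $-z\,\EE[\Phi(\Z)\,\beta(z)]$; replacing $\EE[\Phi\beta]$ by $\EE[\Phi]\,\alpha(z)$ up to $\mathcal{O}(s^{-2})$, then $\tilde r=(1+z\alpha)^{-1}$ by $\tilde\delta$ and $\EE[\frac1s\tr\A\M{\bf S}]$ by $\frac1s\tr\A\M\boldsymbol{\Xi}$ via Proposition~\ref{prop:approximation}, turns this into a scalar affine equation for $\EE[\Phi]$ whose solution is the stated expansion. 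The same manipulation applied to $\frac1s\tr\M{\bf S}\M{\bf S}$ --- using the resolvent identity ${\bf S}=\I-\frac zs\Z\Z^{\herm}{\bf S}$ to expose a $\Z\Z^{\herm}$ factor and then integrating by parts --- yields the self-consistent relation $\EE[\frac1s\tr\M{\bf S}\M{\bf S}]=\gamma+z^{2}\gamma\tilde\gamma\,\EE[\frac1s\tr\M{\bf S}\M{\bf S}]+\mathcal{O}(s^{-2})$, hence the third formula; and the analogous computation for $\Psi(\Z)$, where both ${\bf S}$-factors contribute when differentiated, couples $\EE[\Psi]$ linearly to $\EE[\frac1s\tr\M{\bf S}\M{\bf S}]$ through the same resummation factor $1-z^{2}\gamma\tilde\gamma$, and substituting the value just found gives the second formula.

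The main obstacle is the double-resolvent bookkeeping: making sure that every term created by differentiating a product of two resolvents is accounted for and that the genuinely quadratic ones recombine precisely into the factor $1-z^{2}\gamma\tilde\gamma$ rather than into spurious combinations; verifying that $1-z^{2}\gamma\tilde\gamma$ stays bounded away from zero on the chosen domain (which follows as in \cite{HAC06} from the strict inequality $z^{2}\gamma\tilde\gamma<1$) so that the affine equations can be inverted with only an $\mathcal{O}(s^{-2})$ error; and finally checking that all remainders are truly $\mathcal{O}(s^{-2})$ and not merely $\mathcal{O}(s^{-1})$ --- this last point being precisely where the variance estimates of item~1 are indispensable, since they are what upgrade the naive $\mathcal{O}(s^{-1})$ bound on each covariance or centering term to the required order.
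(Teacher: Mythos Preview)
Your outline is correct and follows exactly the Gaussian-calculus machinery of \cite{HAC06}: Poincar\'e--Nash for the variance bounds, integration by parts plus Proposition~\ref{prop:approximation} for the expectations, and the self-consistent closure through $1-z^{2}\gamma\tilde\gamma$ for the double-resolvent traces. Note however that the paper does not actually supply its own proof of this proposition: it is listed among the ``mathematical tools'' that ``can be found in \cite{HAC06}'' and is quoted without argument, so there is no in-paper proof to compare against --- your sketch \emph{is} the \cite{HAC06} argument.
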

 	\end{enumerate}
 	\subsection{Central limit theorem}
 	All the notations being defined, we are now in position to
        show the CLT. We recall that our objective is to study the
        fluctuations of $\trad(y)=\sum_{t=1}^T
        \hat{\II}_{{\rm ES},t}(y)$. Since $\left(\hat{II}_{{\rm
              ES},t}(y),t=1,\cdots,T\right)$ are independent, it
        suffices to consider the CLT for $\hat{\II}_{{\rm ES},t}(y)$,
        for $t\in\left\{1,\cdots,T\right\}$. We consider thus the
        random quantity
        $I_s(z)-\log\det\left(\frac{1}{s}\Z\Z^{\herm}\right)$.
 	Before getting into the proof details, we shall first recall the CLT of $g(\Z)=-\log\det(\frac{1}{s}\Z\Z^{\herm})$ whose proof can be found in \cite{BAI04}. Indeed, it is shown that:
 	$$
 	\frac{-1}{\log(1-\frac{p_t}{s})}\left(-\log\det\left(\frac{1}{s}\Z\Z^{\herm}\right)-b_s\right)\xrightarrow[N,M,n\to\infty]{\mathcal{D}} \mathcal{N}(0,1).
 	$$
 	where $b_s=-p_t\left[\left(1-\frac{s}{p_t}\right)\log\left(1-\frac{p_t}{s}\right)-1\right]$.
 	Like in \cite{HAC06}, define $\Psi_s(u,z)=\EE\left[e^{\jmath u(I_s(z)-V_s(z)+g(\Z)-b_s)}\right]$, where $V_s(z)$ is the deterministic equivalent defined by:
 	$$
 	V_s(z)=s\log\left(1+z\delta(z)\right)+\log\det\left(\I+\frac{z}{1+z\delta(z)}\M\right)-sz\delta(z)\tilde{\delta}(z),
 	$$
 	and verifying:
 	$$
 	\frac{1}{s}\left(I_s(z)-V_s(z)\right)\xrightarrow[M,N,n\to\infty]{{\rm a.s}}{0}.
 	$$
 	The principle of the proof is to establish a differential equation verified by $\Psi_s(u,z)$. Writing the derivative of $\Psi_s(u,z)$ with respect to $z$, we get:
 	\begin{equation}
 	\frac{\partial \Psi_s}{\partial z}=\EE\left[\jmath u \frac{\partial I_s(z)}{\partial z}e^{\jmath u I_s(z)+\jmath u g(\Z)}\right]e^{-\jmath u V_s(z)-\jmath u b_s}-\jmath u \frac{d V_s(z)}{d z}\Psi_s(u,z).
 	\end{equation}
	Since $\frac{d V_s(z)}{d z}=s\delta\tilde{\delta}$ \cite{HAC06}, we have:
	\begin{equation}
 	\frac{\partial \Psi_s}{\partial z}=\EE\left[\jmath u \frac{\partial I_s(z)}{\partial z}e^{\jmath u I_s(z)+\jmath u g(\Z)}\right]e^{-\jmath u V_s(z)-\jmath u b_s}-\jmath u s\delta\tilde{\delta}\Psi_s(u,z).
 	\label{eq:derivative_psi_b}
 	\end{equation}

 	On the other hand, we have:
 	\begin{align*}
 	\EE\left[\frac{\partial I_s(z)}{\partial z} e^{\jmath u I_s(z)+\jmath u g(\Z)}\right]&=\EE \left[\tr\left(\frac{{\bf S}\Z\Z^{\herm}}{s}\right)e^{\jmath u I_s(z)+\jmath u g(\Z)}\right]\\
 	&=\frac{1}{s}\sum_{p,i=1}^{p_t} \sum_{j=1}^s \EE\left[Z_{i,j}S_{p,i}Z_{p,j}^* e^{\jmath u I(z)+\jmath u g(\Z)}\right].
 	\end{align*}
 	Applying the integration by part formula, we get:
 	\begin{align*}
 	\EE\left[Z_{i,j}S_{p,i}Z_{p,j}^*e^{\jmath u I_s(z)+\jmath u g(\Z)}\right]&=\EE\left[m_i \frac{\partial}{\partial Z_{i,j}^*}\left[S_{p,i}Z_{p,j}^*e^{\jmath u I(z)+\jmath u g(\Z)}\right]\right]\\
 	&= \EE\left[m_i S_{p,i}\delta(p-i)e^{\jmath u I(z)+\jmath u g(\Z)}\right]\\
 	&-\frac{z}{s}\EE\left[\left[{\bf S}\Z\right]_{p,j}m_iS_{i,i}Z_{p,j}^*e^{\jmath u I_s(z)+\jmath u g(\Z)}\right]\\
 	&+\frac{\jmath uz}{s} \EE\left[m_iS_{p,i}Z_{p,j}^* \left[{\bf S}\Z\right]_{i,j}e^{\jmath u I_s(z)+\jmath u g(\Z)}\right]\\
 	&+\EE\left[\jmath u m_i S_{p,i}Z_{p,j}^*\frac{\partial g(\Z)}{\partial Z_{i,j}^*}e^{\jmath u I_s(z)+\jmath u g(\Z)}\right].
 	\end{align*}
 	After summing over index $i$, we obtain:
 	\begin{align}
 	\EE\left[\left[{\bf S}\Z\right]_{p,j}Z_{p,j}^* e^{\jmath u I_s(z)+\jmath u g(\Z)}\right]&=\EE\left[m_p S_{p,p}e^{\jmath u I_s(z)+\jmath u g(\Z)}\right]\nonumber\\
 	&-\frac{z}{s}\EE\left[\tr({\M{\bf S}})\left[{\bf S}\Z\right]_{p,j}Z_{p,j}^* e^{\jmath u I_s(z)+\jmath u g(\Z)}\right]\nonumber\\
 	&+\frac{\jmath z u }{s}\EE\left[\left[{\bf S}\M{\bf S}\Z\right]_{p,j}Z_{p,j}^*e^{\jmath u I_s(z)+\jmath u g(\Z)}\right]\nonumber\\
 	&-\jmath u \EE\left[\left[{\bf S}\M\left(\Z\Z^{\herm}\right)^{-1}\Z\right]_{p,j}Z_{p,j}^* e^{\jmath u I_s(z)+\jmath u g(\Z)}\right]. \label{eq:SZ}
 	\end{align}
 	Recall the relation $\beta=\frac{1}{s}\tr\M{\bf S}$ and $\stackrel{o}{\beta}=\beta-\alpha$ where $\alpha=\frac{1}{s}\tr\M\EE{\bf S}$. Plugging the relation $\beta=\alpha+\stackrel{o}{\beta}$ into \eqref{eq:SZ}, we get:
 	\begin{align}
 	\EE\left[\left[{\bf S}\Z\right]_{p,j}Z_{p,j}^* e^{\jmath u I_s(z)+\jmath u g(\Z)}\right]&=\EE\left[m_p S_{p,p}e^{\jmath u I_s(z)+\jmath u g(\Z)}\right]-z\EE\left[\stackrel{o}{\beta}\left[{\bf S}\Z\right]_{p,j}Z_{p,j}^* e^{\jmath u I_s(z)+\jmath u g(\Z)}\right]\nonumber\\
 	&-z\alpha\EE\left[\left[{\bf S}\Z\right]_{p,j}Z_{p,j}^* e^{\jmath u I_s(z)+\jmath u g(\Z)}\right]+\frac{\jmath z u }{s}\EE\left[\left[{\bf S}\M{\bf S}\Z\right]_{p,j}Z_{p,j}^*e^{\jmath u I_s(z)+\jmath u g(\Z)}\right]\nonumber\\
 	&-\jmath u \EE\left[\left[{\bf S}\M\left(\Z\Z^{\herm}\right)^{-1}\Z\right]_{p,j}Z_{p,j}^* e^{\jmath u I_s(z)+\jmath u g(\Z)}\right]. \label{eq:SZ1}
 	\end{align}
 	Hence, solving this equation with respect to $\EE\left[\left[{\bf S}\Z\right]_{p,j}Z_{p,j}^* e^{\jmath u I_s(z)+\jmath u g(\Z)}\right]$ and using the fact that $\tilde{r}=\frac{1}{1+z\alpha}$, we get:
 	\begin{align}
 	\EE\left[\left[{\bf S}\Z\right]_{p,j}Z_{p,j}^* e^{\jmath u I_s(z)+\jmath u g(\Z)}\right]&=\EE\left[m_p\tilde{r}S_{p,p}e^{\jmath u I_s(z)+\jmath u g(\Z)}\right]
 	-z\EE\left[\stackrel{o}{\beta}\tilde{r}\left[{\bf S}\Z\right]_{p,j}Z_{p,j}^* e^{\jmath u I_s(z)+\jmath u g(\Z)}\right]\nonumber\\
 	&+\frac{z}{s}\EE\left[\jmath u \tilde{r}\left[{\bf S}\M{\bf S}\Z\right]_{p,j}Z_{p,j}^*e^{\jmath u I_s(z)+\jmath u g(\Z)}\right]\nonumber\\
 	&-\jmath u \EE\left[\tilde{r}\left[{\bf S}\M\left(\Z\Z^{\herm}\right)^{-1}\Z\right]_{p,j}Z_{p,j}^* e^{\jmath u I_s(z)+\jmath ug(\Z)}\right].\label{eq:useful_b}
 	\end{align}
 	Using the relation $S_{p,p}=1-\frac{z}{s}\left[{\bf S}\Z\Z^{\herm}\right]_{p,p}$, we get after summing with respect to $j$,
 	\begin{align*}
 	\EE\left[\left[\frac{{\bf S}\Z\Z^{\herm}}{s}\right]_{p,p}e^{\jmath u I_s(z)+\jmath u g(\Z)}\right]&=\EE\left[m_p \tilde{r}e^{\jmath u I_s(z)+\jmath u g(\Z)}\right]-zm_p\tilde{r}\left[\left[\frac{{\bf S}\Z\Z^{\herm}}{s}\right]_{p,p}e^{\jmath u I_s(z)+\jmath u g(\Z)}\right]\\
 	&-z\EE\left[\stackrel{o}{\beta}\tilde{r}\left[\frac{{\bf S}\Z\Z^{\herm}}{s}\right]_{p,p}e^{\jmath u I_s(z)+\jmath u g(\Z)}\right]+\frac{\jmath u z}{s}\EE\left[\tilde{r}\left[{\bf S}\M{\bf S}\frac{\Z\Z^{\herm}}{s}\right]_{p,p}e^{\jmath u I_s(z)+\jmath u g(\Z)}\right]\\
 	&-\jmath u \EE\left[\tilde{r}\left[\frac{{\bf S}\M}{s}\right]_{p,p}e^{\jmath u I_s(z)+\jmath u g(\Z)}\right].\label{eq:useful}
 	\end{align*}
 	Using the relation $r_p=\frac{1}{1+z\tilde{r}m_p}$, we have:
 	\begin{align*}
 	\EE\left[\left[\frac{{\bf S}\Z\Z^{\herm}}{s}\right]_{p,p}e^{\jmath u I_s(z)+\jmath u g(\Z)}\right]&=\EE\left[m_p r_p \tilde{r}e^{\jmath u I_s(z)+\jmath u g(\Z)}\right]-z\EE\left[\stackrel{o}{\beta}\tilde{r}r_p\left[\frac{{\bf S}\Z\Z^{\herm}}{s}\right]_{p,p}e^{\jmath u I_s(z)+\jmath u g(\Z)}\right]\\
 	&+\frac{\jmath u z}{s}\EE\left[\tilde{r}r_p\left[{\bf S}\M{\bf S}\frac{\Z\Z^{\herm}}{s}\right]_{p,p}e^{\jmath u I_s(z)+\jmath u g(\Z)}\right]-\jmath u \EE\left[\tilde{r}r_p\left[\frac{{\bf S}\M}{s}\right]_{p,p}e^{\jmath u I_s(z)+\jmath u g(\Z)}\right].
 	\end{align*}
 	Summing over $p$, we finally obtain:
 	\begin{align*}
 	\EE\left[\tr\left(\frac{{\bf S}\Z\Z^{\herm}}{s}\right)e^{\jmath u I_s(z)+\jmath u g(\Z)}\right]&=\tilde{r}\tr(\M\R)\EE\left[e^{\jmath u I_s(z)+\jmath u g(\Z)}\right]-z\EE\left[\stackrel{o}{\beta}\tilde{r}\tr\left(\R{\bf S}\frac{\Z\Z^{\herm}}{s}\right)e^{\jmath u I_s(z)+\jmath u g(\Z)}\right]\\
 	&+\frac{z}{s}\jmath u \EE\left[\tilde{r}\tr\left(\R{\bf S}\M{\bf S}\frac{\Z\Z^{\herm}}{s}\right)e^{\jmath u I_s(z)+\jmath u g(\Z)}\right]\\
 	&-\jmath u \tilde{r}\EE\left[\tr\left(\frac{\R{\bf S}\M}{s}\right)e^{\jmath u I_s(z)+\jmath u g(\Z)}\right]\\
 	&=\chi_1+\chi_2+\chi_3+\chi_4.
 	\end{align*}
 	It remains thus to deal with the terms $\left(\chi_i, 1\leq i \leq 4\right)$.
 	Using proposition \ref{prop:approximation}, we have:
 	\begin{equation}
 	\chi_1=\tilde{r}\tr\M\R\EE\left[e^{\jmath u I_s(z)+\jmath u g(\Z)}\right]=s\delta\tilde{\delta}\EE\left[e^{\jmath u I_s(z)+\jmath u g(\Z)}\right]+\mathcal{O}\left(s^{-1}\right).
 	\label{eq:chi_1}
 	\end{equation}
 	To deal with $\chi_3$, we apply the results of proposition \ref{prop:estimation}-b, with $\A=\R$ and $\B=\I$. In this case, $\chi_3$ writes as : $\chi_3=z\jmath u \tilde{r}\EE\Psi(\Z)e^{\jmath u I_s(z)+\jmath u g(\Z)}$. Using Cauchy-Schwartz inequality, we get:
 	$$
 	\left|\EE\left(\Psi(\Z) e^{\jmath u I_s(z)+\jmath u g(\Z)}\right)-\EE e^{\jmath u I_s(z)+\jmath u g(\Z)}\EE \left(\Psi(\Z)\right)\right|\leq \sqrt{\EE\left[\left|\stackrel{o}{\Psi}(\Z)\right|^2\right]},
 	$$
 	where $\stackrel{o}{\Psi}(\Z)=\Psi(\Z)-\EE\left(\Psi(\Z)\right)$.
 	Therefore,
 	\begin{equation}
 	\chi_3=\frac{z\jmath u \tilde{\delta}}{1-z^2\gamma\tilde{\gamma}}\left[\tilde{\delta}\frac{1}{s}\tr(\M^2\boldsymbol{\Xi}^3)-\frac{z\gamma\tilde{\gamma}}{s}\tr\M\boldsymbol{\Xi}^2\right]\EE\left[e^{\jmath u I_s(z)+\jmath u g(\Z)}\right]+\mathcal{O}\left(s^{-1}\right).
 	\label{eq:chi_3}
 	\end{equation}
 	The term $\chi_2$ can be dealt with in the same way, thus proving:
 	\begin{equation}
 	\chi_2=-z\EE\left[\stackrel{o}{\beta}e^{\jmath u I_s(z)+\jmath u g(\Z)}\right]\tilde{\gamma}\tr(\M\boldsymbol{\Xi}^2)+\mathcal{O}\left(s^{-1}\right).
 	\label{eq:chi2_first}
 	\end{equation}
 	Since $\tr(\M\boldsymbol{\Xi}^2)$ is of order $s$, we shall expand $\EE\left[\stackrel{o}{\beta}e^{\jmath u I_s(z)+\jmath u g(\Z)}\right]$ to at least the order $s^{-1}$, and thus $\stackrel{o}{\beta}$ and $\EE\left[e^{\jmath u I_s(z)+\jmath u g(\Z)}\right]$ cannot be separated in the same way as above.
 	
 	Indeed, we shall first take the sum over $j$ in \eqref{eq:useful_b}, thus yielding:
 	\begin{align}
 	\EE\left[\left[{\bf S}\Z\Z^{\herm}\right]_{p,p}e^{\jmath u I_s(z)+\jmath u g(\Z)}\right]&=\EE\left[s m_p\tilde{r}S_{p,p}e^{\jmath u I_s(z)+\jmath u g(\Z)}\right]-z\EE\left[\stackrel{o}{\beta}\tilde{r}\left[{\bf S}\Z\Z^{\herm}\right]_{p,p}e^{\jmath u I_s(z)+\jmath u g(\Z)}\right]\nonumber\\
 	&+\frac{z}{s}\EE\left[\jmath u \tilde{r}\left[{\bf S}\M{\bf S}\Z\Z^{\herm}\right]_{p,p}e^{\jmath u I_s(z)+\jmath u g(\Z)}\right]-\jmath u\EE\left[\tilde{r}\left[{\bf S}\M\right]_{p,p}e^{\jmath u I_s(z)+\jmath u g(\Z)}\right].\label{eq:previous}
 	\end{align}
 	Using the fact that:
 	$$
 	\frac{z}{s}\left[\left[{\bf S}\Z\Z^{\herm}\right]_{p,p}e^{\jmath u I_s(z)+\jmath u g(\Z)}\right]=\EE\left[e^{\jmath u I_s(z)+\jmath u g(\Z)}\right]-\EE\left[S_{p,p}e^{\jmath u I_s(z)+\jmath u g(\Z)}\right],
 	$$
 	Eq. \eqref{eq:previous} becomes:
 	\begin{align}
 	\EE\left[e^{\jmath u I_s(z)+\jmath u g(\Z)}\right]-\EE\left[S_{p,p}e^{\jmath u I_s(z)+\jmath u g(\Z)}\right]&=z\EE\left[m_p\tilde{r}S_{p,p}e^{\jmath u I_s(z)+\jmath u g(\Z)}\right]-z^2\EE\left[\stackrel{o}{\beta}\tilde{r}\left[\frac{{\bf S}\Z\Z^{\herm}}{s}\right]_{p,p}e^{\jmath u I_s(z)+\jmath u g(\Z)}\right]\nonumber\\
 	&+\frac{z^2}{s}\EE\left[\jmath u \tilde{r}\left[{\bf S}\M{\bf S}\frac{\Z\Z^{\herm}}{s}\right]_{p,p}e^{\jmath u I_s(z)+\jmath u g(\Z)}\right]-\frac{\jmath u z}{s}\EE\left[\tilde{r}\left[{\bf S}\M\right]_{p,p}e^{\jmath u I_s(z)+\jmath u g(\Z)}\right].\label{eq:solving_eq}
 	\end{align}
 	Solving $\EE\left[S_{p,p}e^{\jmath u I_s(z)+\jmath u g(\Z)}\right]$ in \eqref{eq:solving_eq} and using the relation $r_p=\frac{1}{1+zm_p\tilde{r}}$, we obtain:
 	\begin{align}
 	\EE\left[S_{p,p}e^{\jmath u I_s(z)+\jmath u g(\Z)}\right]&=\EE\left[r_pe^{\jmath u I_s(z)+\jmath u g(\Z)}\right]+\frac{z^2}{s}\EE\left[\stackrel{o}{\beta}r_p\tilde{r}\left[{\bf S}\Z\Z^{\herm}\right]_{p,p}e^{\jmath u I_s(z)+\jmath u g(\Z)}\right]\nonumber\\
 	&-\frac{z^2}{s}\EE\left[\jmath u \tilde{r}r_p\left[{\bf S}\M{\bf S}\frac{\Z\Z^{\herm}}{s}\right]_{p,p}e^{\jmath u I_s(z)+\jmath u g(\Z)}\right]+\frac{\jmath u z}{s}\EE\left[\tilde{r}r_p\left[{\bf S}\M\right]_{p,p}e^{\jmath u I_s(z)+\jmath u g(\Z)}\right].\label{eq:mult_eq}
 	\end{align}
 	Multiplying both sides in \eqref{eq:mult_eq} by $m_p$ and summing over $p$, we get:
 	\begin{align*}
 	\EE\left[\stackrel{o}{\beta}e^{\jmath u I_s(z)+\jmath u g(\Z)}\right]&=\EE\left[\frac{1}{s}\tr(\M\R-\M\EE{\bf S})e^{\jmath u I_s(z)+\jmath u g(\Z)}\right]+\frac{z^2}{s}\EE\left[\stackrel{o}{\beta}\frac{\tilde{r}}{s}\tr(\M\R{\bf S}\Z\Z^{\herm})e^{\jmath u I_s(z)+\jmath u g(\Z)}\right]\\
 	&-\frac{z^2}{s}\EE\left[\jmath u \tilde{r}\frac{1}{s}\tr(\M\R{\bf S}\M{\bf S}\frac{\Z\Z^{\herm}}{s})e^{\jmath u I_s(z)+\jmath u g(\Z)}\right]+\frac{\jmath u z}{s^2}\tilde{r}\EE\left[\tr\left(\M\R\M{\bf S}\right) e^{\jmath u I_s(z)+\jmath u g(\Z)}\right].
 	&\end{align*}
 	Using the approximating expressions in proposition \ref{prop:estimation}, we obtain:
 	\begin{align*}
 	\EE\left[\stackrel{o}{\beta}e^{\jmath u I_s(z)+\jmath u g(\Z)}\right]&=z^2\gamma\tilde{\gamma}\EE\left[\stackrel{o}{\beta}e^{\jmath u I_s(z)+\jmath u g(\Z)}\right]-\frac{z^2\tilde{\delta}\jmath u}{s(1-z^2\gamma\tilde{\gamma})}\left(\tilde{\delta}\frac{1}{s}\tr(\M^3\boldsymbol{\Xi}^3)-z\gamma^2\tilde{\gamma}\right)\EE\left[e^{\jmath u I_s(z)+\jmath u g(\Z)}\right]\\
 	&+\frac{\jmath u z}{s^2}\tilde{r}\EE\left[\tr(\M\R{\bf S}\M)e^{\jmath u I_s(z)+\jmath u g(\Z)}\right]+\mathcal{O}\left(s^{-2}\right).
 	\end{align*}
 	Hence,
 	\begin{align}
 	\EE\left[\stackrel{o}{\beta}e^{\jmath u I_s(z)+\jmath u g(\Z)}\right]&=-\frac{z^2\jmath u}{s(1-z^2\gamma\tilde{\gamma})^2}\left(\tilde{\gamma}\frac{1}{s}\tr(\M^3\boldsymbol{\Xi}^3)-z\gamma^2\tilde{\delta}^3\right)\EE\left[e^{\jmath u I_s(z)+\jmath u g(\Z)}\right]\nonumber\\
 	&+\frac{\jmath u z \tilde{\delta}\gamma}{s(1-z^2\gamma\tilde{\gamma})}\EE \left[e^{\jmath u I_s(z)+\jmath u g(\Z)}\right]+\mathcal{O}\left(s^{-2}\right).\label{eq:chi_2_useful}
 	\end{align}
Plugging \eqref{eq:chi_2_useful} into \eqref{eq:chi2_first}, the term $\chi_2$ can be written as:
 	\begin{align}
 	\chi_2&=\frac{z^3\jmath u \tilde{\gamma}}{s(1-z^2\gamma\tilde{\gamma})^2}\left(\tilde{\gamma}\frac{1}{s}\tr(\M^3\boldsymbol{\Xi}^3)-z\gamma^2\tilde{\delta}^3\right)\tr(\M\boldsymbol{\Xi}^2)\EE\left[e^{\jmath u I_s(z)+\jmath u g(\Z)}\right]\\
 	&-\frac{\jmath u z^2\gamma\tilde{\delta}^3}{(1-z^2\gamma\tilde{\gamma})}\frac{1}{s}\tr\left(\M\boldsymbol{\Xi}^2\right)\EE\left[e^{\jmath u I_s(z)+\jmath u g(\Z)}\right]+\mathcal{O}\left(s^{-1}\right).
 	\label{eq:chi_2}
 	\end{align}
 	Finally, it remains to deal with $\chi_4$. Using proposition \ref{prop:approximation}, we get:
 	\begin{equation}
 	\chi_4=-\frac{\jmath u \tilde{\delta}}{s}\tr\left(\M\boldsymbol{\Xi}^2\right)\EE\left[e^{\jmath u I_s(z)+\jmath u g(\Z)}\right]+\mathcal{O}\left(s^{-1}\right).
 	\label{eq:chi_4}
 	\end{equation}
 	Summing \eqref{eq:chi_1}, \eqref{eq:chi_3}, \eqref{eq:chi_2} and \eqref{eq:chi_4}, we obtain after some calculations:
 	\begin{align}
 	\EE\left[\tr\left(\frac{{\bf S}\Z\Z^{\herm}}{s}\right)e^{\jmath u I_s(z)+\jmath u g(\Z)}\right]&=\left[s\delta\tilde{\delta}+\frac{z^3\jmath u \tilde{\gamma}^2}{s(1-z^2\gamma\tilde{\gamma})^2}\frac{1}{s}\tr\left(\M^3\boldsymbol{\Xi}^3\right)\tr\left(\M\boldsymbol{\Xi}^2\right)+\frac{z\jmath u\tilde{\gamma}}{1-z^2\gamma\tilde{\gamma}}\frac{1}{s}\tr(\M^2{\boldsymbol{\Xi}}^3)\right.\nonumber\\
 	&\left.-\frac{z^2\jmath u \gamma\tilde{\delta}^3}{(1-z^2\gamma\tilde{\gamma})^2}\frac{1}{s}\tr(\M\boldsymbol{\Xi}^2)-\frac{\jmath u\tilde{\delta}}{1-z^2\gamma\tilde{\gamma}}\frac{1}{s}\tr\M\boldsymbol{\Xi}^2\right]\times\EE\left[e^{\jmath u I_s(z)+\jmath u g(\Z)}\right]+\mathcal{O}\left(s^{-1}\right).\label{eq:trace_previous}
 	\end{align}
 	Hence  the differential of $\Psi_s(u,z)$ with respect to $z$ satisfies:
 	\begin{align*}
 	\frac{\partial \Psi_s}{\partial z}&=\left[\frac{-u^2 z^3 \tilde{\gamma}^2}{(1-z^2\gamma\tilde{\gamma})^2}\frac{1}{s}\tr\left(\M^3\boldsymbol{\Xi}^3\right)\frac{1}{s}\tr\left(\M\boldsymbol{\Xi}^2\right)-\frac{u^2 z\tilde{\gamma}}{(1-z^2\gamma\tilde{\gamma})}\frac{1}{s}\tr\left(\M^2\boldsymbol{\Xi}^3\right)\right.\\
 	&\left.+\frac{ u^2 z^2\gamma\tilde{\delta}^3}{(1-z^2\gamma\tilde{\gamma})^2}\frac{1}{s}\tr\left(\M\boldsymbol{\Xi}^2\right)+\frac{u^2 \tilde{\delta}}{1-z^2\gamma\tilde{\gamma}}\frac{1}{s}\tr\left(\M\boldsymbol{\Xi}^2\right)\right]\Psi_s(u,z)+\mathcal{O}\left(s^{-1}\right).
 	\end{align*}
 	Following the same lines as in \cite{HAC06}, one can prove that:
 	\begin{equation}
 	-\frac{1}{2}\frac{d\log\left(1-z^2\gamma\tilde{\gamma}\right)}{dz}=\frac{1}{1-z^2\gamma\tilde{\gamma}}\left(-\frac{z^2\gamma\tilde{\delta}^3\frac{1}{s}\tr\left(\M\boldsymbol{\Xi}^2\right)}{1-z^2\gamma\tilde{\gamma}}+z\tilde{\gamma}\frac{1}{s}\tr\left(\M^2\boldsymbol{\Xi}^3\right)+\frac{z^3\tilde{\gamma}^2\frac{1}{s}\tr\left(\M^3\boldsymbol{\Xi}^3\right)\frac{1}{s}\tr\left(\M\boldsymbol{\Xi}^2\right)}{1-z^2\gamma\tilde{\gamma}}\right).
 	\label{eq:log1}
 	\end{equation}
 	Moreover, from the system of equations (51) in \cite{HAC06}, one can find that:
 	\begin{equation}
 	\frac{1}{2}\frac{d\log\tilde{\gamma}}{dz}=-\frac{\tilde{\delta}\frac{1}{s}\tr\left(\M\boldsymbol{\Xi}^2\right)}{1-z^2\gamma\tilde{\gamma}}.
 	\label{eq:log2}
 	\end{equation}
 	Using \eqref{eq:log1} and \eqref{eq:log2}, we finally get:
 	$$
 	\frac{\partial \Psi_s}{\partial z}=-\frac{u^2}{2}\left[-\frac{d}{dz}\log(1-z^2\gamma\tilde{\gamma})+\frac{d\log\tilde{\gamma}}{dz}\right]\Psi_s(u,z)+\mathcal{O}\left(s^{-1}\right).
 	$$
 	Let $\sigma_T^2=-\log\left(1-z^2\gamma\tilde{\gamma}\right)+\log\tilde{\gamma}$ and $K_s(u,z)=\Psi_s(u,z)\exp\left(\frac{u^2\sigma_T^2}{2}\right)$. Therefore, $K_s(u,z)$ satisfies:
 	$$
 	\frac{\partial K_s}{\partial z}=\epsilon(s,z)\exp\left(\frac{u^2\sigma_T^2}{2}\right),
 	$$
 	where it can be proven that $|\epsilon(s,z)|\leq \frac{K}{s}$, for every $s$ in $\left[0,\rho\right]$. On the other hand, we have:
 	$$
 	K_s(u,0)=\EE\left[e^{\jmath u \left(-\log\det\left(\frac{1}{s}\Z\Z^{\herm}-b_s\right)\right)}\right].
 	$$
 	Hence,
 	\begin{align*}
 	K_s(u,\rho)&=K_s(u,0)+\int_{0}^\rho \epsilon_s(u,x) dx \\
 	&=e^{\frac{u^2\log(1-\frac{p_t}{s})}{2}}+\mathcal{O}\left(s^{-1}\right).
 	\end{align*}
 	The characteristic function $\Psi_s(u,\rho)$ can be thus approximated as:
 	\begin{equation}
 	\Psi_s(u,\rho)=\exp\left({-\frac{u^2\sigma_T^2}{2}}+\frac{u^2\log(1-\frac{p_t}{s})}{2}\right)+\mathcal{O}\left(s^{-1}\right).
 	\label{eq:characteristic}
 	\end{equation}
 	The characteristic function satisfies the same equation as in \cite{HAC06}. The single difference is that the variance $\alpha_{N,t}(y)$ given by:
 	\begin{equation}
 	\alpha_{N,t}(y)=-\log\left(\frac{1-\gamma\tilde{\gamma}}{\tilde{\gamma}}\right)-\log(1-\frac{p_t}{s})
 	\label{eq:alpha}
 	\end{equation}
 	has two additive terms accounting for the variance of $g(\Z)$ and the correlation between $g(\Z)$ and $I_s(z)$. The CLT can be thus established by using the same arguments in \cite{HAC06}, provided that we show that $\lim\inf\alpha_{N,t}(y)> 0$. For that, we need only to prove that:
 	$$
 	\liminf_{s,p_t}\frac{1-z^2\gamma\tilde{\gamma}}{\tilde{\gamma}} >0.
 	$$
 	Deriving $\tilde{\delta}$ with respect to $z$, one can easily see that:
 	$$
 	\frac{1-z^2\gamma\tilde{\gamma}}{\tilde{\gamma}}=-\frac{1}{\frac{d\tilde{\delta}}{dz}}\frac{1}{s}\tr\left(\M\boldsymbol{\Xi}^2\right).
 	$$
 	It has been shown in \cite[eq.(67)]{HAC06} that $-\frac{d\tilde{\delta}}{dz}$ satisfies:
 	$$
 	0<-\frac{d\tilde{\delta}}{dz}<\frac{p_t}{s}\lambda_{{\rm max},t},
 	$$
 	where $\lambda_{\rm max}=\max\left(\lambda_{1,t},\cdots,\lambda_{p_t,t}\right)$. This fact combined with $\liminf\frac{1}{s}\tr\left(\M\boldsymbol{\Xi}^2\right)>0$ implies that $\liminf\alpha_{N,t}(y)> 0$.
 	It remains thus to express the variance $\alpha_{N,t}(y)$ using the original notations. One can easily show that:
 	\begin{align}
 	\delta&=\frac{1}{s}\tr\left(\frac{My}{s}\D_t^{-\frac{1}{2}}\U_t^{\herm}\H_t\H_t^{\herm}\U_t\D_t^{-\frac{1}{2}}+\frac{\I_N}{1+\delta}\right)^{-1}-\frac{(N-p_t)(1+\delta)}{s}\nonumber\\
 	&=\frac{1}{s}\tr\left((\G_t\G_t^{\herm})\left(\frac{My}{s}\H_t\H_t^{\herm}+\frac{\G_t\G_t^{\herm}}{1+\delta}\right)^{-1}\right)-\frac{(N-p_t)(1+\delta)}{s}\nonumber\\
 	&=\frac{1}{M}\tr\left((\G_t\G_t^{\herm})\left(y\H_t\H_t^{\herm}+\frac{s\, \G_t\G_t^{\herm}}{M(1+\delta)}\right)^{-1}\right)-\frac{(N-p_t)(1+\delta)}{s}.\label{eq:delta_function}
 	\end{align}
 	Then, from \eqref{eq:delta_function}, we can prove that $\frac{M(\delta+1)}{s}-1$ is solution in $x$ of:
 	\begin{equation}
 	x=\frac{1}{M}\tr\left((\G_t\G_t^{\herm})\left(y\H_t\H_t^{\herm}+\frac{\G_t\G_t}{1+x}\right)^{-1}\right).
 	\label{eq:kappa_x}
 	\end{equation}
 	Since $\kappa_t$ is the unique solution of \eqref{eq:kappa_x}, we have:
 	$$
 	\frac{M(\delta+1)}{s}-1=\kappa_t,
 	$$
 	or equivalently:
 	$$
 	\tilde{\delta}=\frac{1}{1+\delta}=\frac{M}{s(\kappa_t+1)}.
 	$$
 	Therefore:
 	\begin{equation}
 	\tilde{\gamma}=\tilde{\delta}^2=\frac{M^2}{s^2(\kappa_t+1)^2}.
 	\label{eq:gamma_tilde_o}
 	\end{equation}
 	In the same way, one can prove that $\gamma$ can be expressed in terms of the original notations as:
 	\begin{equation}
 	\gamma=\frac{s}{M^2}\tr\left(y\H_t\H_t^{\herm}\left(\G_t\G_t^{\herm}\right)^{-1}+\frac{\I_N}{\kappa_t+1}\right)^{-2}-\frac{(\kappa_t+1)^2s(N-p_t)}{M^2}.
 	\label{eq:gamma_o}
 	\end{equation}
 	Substituting \eqref{eq:gamma_o} and \eqref{eq:gamma_tilde_o} into \eqref{eq:alpha}, $\alpha_{N,t}(y)$ becomes
 	$$
 	\alpha_{N,t}(y)=\log M^2-\log\left((M-N)\left(M(\kappa_t+1)^2-\tr\left(y\H_t\H_t^{\herm}\left(\G_t\G_t^{\herm}\right)^{-1}+\frac{\I_N}{\kappa_t+1}\right)^{-2}\right)\right).
 	$$

 	\section{Proof of theorem \ref{th:y}}
 	\label{app:proofy}
 	1)
 	Denote by $R(y)$ and $f(y)$ the functionals given by:
 	\begin{align*}
 	f(y)&=\frac{1}{M}\tr(y\H_t\H_t^{\herm}\Q_t(y))+\frac{M-N}{M}-y\\
 	R(y)&=-\log\det(\Q_t(y))+(M-N)\log(y)-My.
 	\end{align*}
 	where $\Q_t(y)=\left(y\H_t\H_t^{\herm}+\frac{1}{M}\Y_t\Y_t^{\herm}\right)^{-1}$.
 	According to Poincar\'e-Nash inequality, we have:
 	\begin{equation}
 	{\rm var}(\yest)\leq K \sum_{i=1}^N\sum_{j=1}^M\left[ \EE\left|\frac{\partial \yest}{\partial Y_{i,j}^*}\right|^2+\EE\left|\frac{\partial \yest}{\partial Y_{i,j}}\right|^2\right]\ .
 	\label{eq:var}
 	\end{equation}
 	We only deal with the first sum in the previous inequality; the second one can be handled similarly.
 	By the implicit function theorem, if $\frac{\partial f}{\partial y}\neq 0$ then $\frac{\partial \yest}{\partial Y_{i,j}^*}$ writes:
 	\begin{equation}
 	\frac{\partial \yest}{\partial Y_{i,j}^*}=\frac{\frac{\partial f}{\partial Y_{i,j}^*}(\yest)}{\frac{\partial f}{\partial y}(\yest)}\ .
 	\label{eq:varb}
 	\end{equation}
 	As will be shown later, to conclude that ${\rm var}(\yest)=\mathcal{O}(M^{-2})$, we need to establish that $\left|\frac{\partial f}{\partial y}(\yest)\right|$ is lower bounded away from zero, which is a much stronger requirement than $\frac{\partial f}{\partial y}\neq 0$. This can be proved by noticing that $\frac{\partial R}{\partial y}=\frac{Mf}{y}$. Hence
 	\begin{equation}
 	\frac{\partial^2 R}{\partial y^2}(\yest)=\frac{M\frac{\partial f}{\partial y}(\yest)}{\yest}\ .
 	\label{eq:funeq}
 	\end{equation}
 	On the other hand, one can prove by straightforward calculations that $\left|\frac{\partial^2 R}{\partial y^2}(\yest)\right|\geq \frac{M-N}{\yest^2}$ which, plugged into \eqref{eq:funeq}, yields:
 	\begin{equation}
 	\left|\frac{\partial f}{\partial y}\right|\geq \frac{M-N}{M\yest}\ ,
 	\label{eq:fy}
 	\end{equation}
 	which is eventually uniformily lower bounded away from 0 due to Assumption \ref{ass:asymptotics} and to the fact that $\yest\le 1$ by mere definition.
 	Therefore,
 	\begin{align*}
 	\sum_{i=1}^N\sum_{j=1}^M \EE\left|\frac{\partial \yest}{\partial Y_{i,j}^*}\right|^2&\leq \frac{K}{M^4}\sum_{i=1}^N\sum_{j=1}^M |\left[\yest \Q_t\H_t \H_t^{\herm}\Q_t\Y\right]_{i,j}|^2 \ ,\\
 	&\leq \frac{K}{M^3}\tr\left(\Q_t\H_t\H_t^{\herm}\Q_t\frac{\Y\Y^*}{M}\Q_t\H_t \H_t^{\herm}\Q_t\right)\ ,\\
 	&\leq \frac{K}{M^2}\ .
 	\end{align*}
 	To prove 2), we rely on the resolvent identity which states:
 	\begin{equation}
 	\Q_t(a)-\Q_t(b)=(b-a)\Q_t(a)\H_t\H_t^{\herm}\Q_t(b)\ .
 	\label{eq:resolvent}
 	\end{equation}
 	Using \eqref{eq:resolvent}, we obtain:
 	\begin{align*}
 	\yest&=\frac{1}{M}(\yest-\EE \yest)\tr\H_t \H_t^{\herm}\Q_t(\yest)+\frac{1}{M}\tr\EE(\yest)\H_t\H_t^{\herm}\Q_t(\yest)+\frac{M-N}{M}\ ,\\
 	&=\frac{1}{M}(\yest-\EE \yest)\H_t\H_t^{\herm}\Q_t(\EE \yest)-\frac{1}{M}\tr(\yest-\EE \yest)^2\H_t\H_t^{\herm}\Q_t(\yest)\H_t\H_t^{\herm}\Q_t(\EE \yest)\\
 	&\quad +\frac{1}{M}\tr\EE(\yest)\H_t\H_t^{\herm}\Q_t(\EE \yest)-\frac{1}{M}\tr\EE(\yest)(\yest-\EE(\yest))\H_t\H_t^{\herm}\Q_t(\yest)\H_t\H_t^{\herm}\Q_t(\EE(\yest))+\frac{M-N}{M}\ ,\\
 	&\stackrel{(a)}{=}\frac{1}{M}(\yest-\EE \yest)\tr\H_t\H_t^{\herm}\T(\EE(\yest))+\frac{1}{M}\EE(\yest)\H_t\H_t^{\herm}\T(\EE(\yest))\\
 	&\quad -\EE(\yest)(\yest-\EE \yest)\EE\left[\frac{1}{M}\tr\H_t\H_t^{\herm}\Q_t(\yest)\H_t\H_t^{\herm}\Q_t(\EE(\yest))\right]+\frac{M-N}{M}+\varepsilon\ ,
 	\end{align*}
 	where $\varepsilon$ satisfies $\EE(\varepsilon)=\mathcal{O}(M^{-2})$. Note that equality $(a)$ follows from the fact that
 	$$
 	{\rm var}(\yest)=\mathcal{O}\left(\frac 1{M^{2}}\right)\quad \textrm{and}\quad {\rm var}\left(\frac{1}{M}\tr\H_t\H_t^{\herm}\Q_t(\yest)\H_t\H_t^{\herm}\Q_t(\EE(\yest))\right)=\mathcal{O}\left(\frac 1{M^{2}}\right)\ .
 	$$
 	Both estimates can be established with the help of Poincar\'e-Nash inequality.
 	Therefore:
 	\begin{align}
 	\EE(\yest)&=\frac{1}{M}\EE(\yest)\tr\H_t\H_t^{\herm}\T_t(\EE(\yest))+\frac{M-N}{M}+\mathcal{O}(M^{-2})\nonumber\\
 	&=1-\frac{1}{M(1+\kappa(\EE(\yest))}\tr((\G_t\G_t^{\herm})\T_t(\EE(\yest)))+\mathcal{O}(M^{-2})\nonumber\\
 	&=1-\frac{\kappa_t(\EE(\yest))}{1+\kappa(\EE(\yest))}+\mathcal{O}(M^{-2})\nonumber\\
 	&=\frac{1}{1+\kappa_t(\EE(\yest))}+\mathcal{O}(M^{-2})\ .\label{eq:yNt}
 	\end{align}
	 Using the mere definition of $y_{N,t}$ and \eqref{eq:yNt}, we obtain:
 	\begin{align}
 	\EE(\yest)-y_{N,t}&=\frac{1}{1+\kappa(\EE(\yest))}-\frac{1}{1+\kappa(y_{N,t})}+\mathcal{O}(M^{-2})\ ,\nonumber\\
 	&=\frac{\kappa(y_{N,t})-\kappa(\EE(y_{N,t}))}{(1+\kappa(\EE(\yest)))(1+\kappa(y_{N,t}))}+\mathcal{O}(M^{-2})\label{eq:final}\ .
 	\end{align}

	Following the same lines as in Appendix \ref{app:lemma_deter}, we can prove that for every real postives $y_1$ and $y_2$, we have:

	$$
	\kappa_t(y_1)-\kappa_t(y_2)=(y_2-y_1)\tilde{\alpha}_{N,t}(y_1,y_2)+(\kappa_t(y_1)-\kappa_t(y_2))\tilde{\beta}_{N,t}(y_1,y_2)
	$$
	where $\tilde{\alpha}_{N,t}(y_1,y_2)$ and $\tilde{\beta}_{N,t}(y_1,y_2)$ are given by:
	\begin{align*}
\tilde{\alpha}_{N,t}(y_1,y_2)&=\frac{1}{M}\tr\G_t\G_t^{\herm}\T(y_2)\H_t\H_t^{\herm}\T(y_1),\\
\tilde{\beta}_{N,t}(y_1,y_2)&=\frac{1}{M}\tr\frac{\G_t\G_t^{\herm}\T(y_2)\G_t\G_t^{\herm}\T(y_1)}{(1+\kappa_t(y_1))(1+\kappa_t(y_2))}.
	\end{align*}
	Moreover, we can easily notice that $\tilde{\beta}_{N,t} \leq \liminf \frac{N}{M} <1$. This allows us to express $\kappa_t(y_1)-\kappa_t(y_2)$ as:
	$$
\kappa_t(y_1)-\kappa_t(y_2)=\frac{\tilde{\alpha}_{N,t}(y_1,y_2)(y_2-y_1)}{1-\tilde{\beta}_{N,t}(y_1,y_2)}
	$$
Using this relation, we obtain from \eqref{eq:final}:
$$
\EE(\yest)-y_{N,t}=\gamma_{N,t}(\EE \yest-y_{N,t})+\mathcal{O}(M^{-2})
$$
where
$$
\gamma_{N,t}=\frac{\tilde{\alpha}_{N,t}(y_{N,t},\EE \yest)}{(1-\tilde{\beta}_{N,t}(y_{N,t},\yest))(1+\kappa_t(\EE\yest))(1+\kappa(y_{N,t}))}
$$
To conclude, we shall establish that $\limsup \gamma_{N,t}<1$. This is true, since 
using the relation $y_{N,t}=(1+\kappa(y_{N,t}))^{-1}$, we prove after some calculations that:
$$
\gamma_{N,t}=\frac{c_{N,t}}{1-d_{N,t}},
$$
where
\begin{align*}
c_{N,t}&=\frac{1}{M(1+\kappa(\EE(\yest))}\tr(\H_t\H_t^{\herm}+\G_t\G_t^{\herm})^{-1}\H_t\H_t^{\herm}\T(\EE(\yest))\G_t\G_t^{\herm}\\
d_{N,t}&=\frac{1}{M(1+\kappa(\EE(\yest)))}\tr(\H_t\H_t^{\herm}+\G_t\G_t^{\herm})^{-1}\G_t\G_t^{\herm}\T(\EE(\yest))\G_t\G_t^{\herm}
\end{align*}
Since $c_{N,t}+d_{N,t}\leq \frac{N}{M}$, and  $d_{N,t}\leq \frac{N}{M}$
$$
\frac{c_{N,t}}{1-d_{N,t}}-1=\frac{c_{N,t}+d_{N,t}-1}{1-d_{N,t}}\leq \frac{\frac{N}{M}-1}{1-\frac{N}{M}}<0,
$$
which implies that:
$$
\limsup_{N,M} \frac{c_{N,t}}{1-d_{N,t}}< 1.
$$
%
 	
 	\bibliographystyle{IEEEtran}
 	\bibliography{./IEEEabrv,./IEEEconf.bib,./tutorial_RMT.bib}

 	\end{document}